\newfont{\msam}{msam10}
\newtheorem{theorem}[]{Theorem}
\newtheorem{proposition}[]{Proposition}
\newtheorem{corollary}[]{Corollary}
\newtheorem{lemma}[]{Lemma}
\theoremstyle{definition}
\newtheorem{definition}[]{Definition}
\newtheorem{remark}[]{Remark}
\let\nc\newcommand
\def\bthm{\begin{theorem}}
\def\ethm{\end{theorem}}
\def\blemma{\begin{lemma}}
\def\elemma{\end{lemma}}
\def\bproof{\begin{proof}}
\def\eproof{\end{proof}}
\def\bprop{\begin{proposition}}
\def\eprop{\end{proposition}}
\def\bcor{\begin{corollary}}
\def\ecor{\end{corollary}}
\nc{\la}{\label}
\def\c{\mathbb{C}}
\def\M{\mathcal{M}}
\def\L {\mathbb{L}}
\def\Com{\mathtt{Com}}
\def\Alg{\mathtt{Alg}}
\def\DGL{\mathtt{DGLA}}
\def\DGC{\mathtt{DGC}}
\def\cDGC{\mathtt{DGCC}}
\def\DGLC{\mathtt{DGLC}}
\def\DGA{\mathtt{DGA}}
\def\cDGA{\mathtt{DGCA}}
\def\DGMod{\mathtt{DG\,Mod}}
\def\D{{\mathscr D}}
\def\C{\mathcal{C}}
\def\Ho{{\mathtt{Ho}}}
\def\mfa{\mathfrak{a}}
\nc{\Ob}{{\rm Ob}}
\nc{\Hom}{{\rm{Hom}}}
\nc{\Homcont}{{\mathcal{H}om}}
\nc{\HOM}{\underline{\rm{Hom}}}
\nc{\DER}{\underline{\rm{Der}}}
\nc{\END}{\underline{\rm{End}}}
\nc{\bSym}{\mathbf{Sym}}
\nc{\Ext}{{\rm{Ext}}}
\nc{\Rep}{{\rm{Rep}}}
\nc{\DRep}{{\rm{DRep}}}
\nc{\NCRep}{\widetilde{\rm{Rep}}}
\nc{\RAct}{{\rm{RAct}}}
\nc{\bs}{\backslash}
\nc{\ob}{{\tt{Obs}}}
\nc{\CE}{\mathcal{C}}
\nc{\TP}{{T\!P}}
\nc{\nn}{{{\natural} {\natural}}}
\nc{\n}{{{\natural}}}
\nc{\A}{\mathbb A}
\nc{\B}{{\mathrm{B}}}
\nc{\Ba}{\overline{\mathrm{B}}}
\nc{\bC}{\overline{C}}
\nc{\bOmega}{\boldsymbol{\Omega}}
\nc{\bB}{\boldsymbol{B}}
\nc{\EXT}{\underline{\rm{Ext}}}
\nc{\TOR}{\underline{\rm{Tor}}}
\def\H{\mathrm H}
\def\HC{\mathrm{HC}}
\def\HR{\mathrm{HR}}
\def\rHC{\overline{\mathrm{HC}}}
\nc{\End}{{\rm{End}}}
\nc{\GL}{{\rm{GL}}}
\nc{\gl}{{\mathfrak{gl}}}
\nc{\rgl}{\overline{{\mathfrak{gl}}}}
\nc{\g}{{\mathfrak{g}}}
\nc{\h}{{\mathfrak{h}}}
\nc{\PGL}{{\rm{PGL}}}
\nc{\SL}{{\rm{SL}}}
\nc{\sll}{\mathfrak{sl}}
\nc{\cn}{ \mbox{\rm c\^{o}ne} }
\nc{\PSL}{{\rm{PSL}}}
\nc{\ad}{{\rm{ad}}}
\nc{\Ad}{{\rm{Ad}}}
\nc{\dlim}{\varinjlim}
\nc{\plim}{\varprojlim}
\nc{\colim}{{\tt{colim}}}
\newcommand{\HH}{{\rm{HH}}}
\newcommand{\Sym}{{\rm{Sym}}}
\newcommand{\id}{{\rm{Id}}}
\newcommand{\Der}{{\rm{Der}}}
\newcommand{\Tr}{{\rm{Tr}}}
\def\cb{\boldsymbol{\Omega}}
\def\bs{\backslash}
\def\U{\mathcal{U}}
\newcommand{\rar}{\xrightarrow{}}
\newcommand{\dgb}{\mathtt{DGBimod}}
\nc{\env}{\mathrm{End}(V)}
\nc{\FT}{\mathcal{C}}
\numberwithin{equation}{section}
\numberwithin{theorem}{section}
\numberwithin{lemma}{section}
\numberwithin{proposition}{section}
\numberwithin{corollary}{section}
\numberwithin{example}{section}
\numberwithin{remark}{section}
\def\ldb{\mathopen{\{\!\!\{}}
\def\rdb{\mathclose{\}\!\!\}}}
\nc{\Char}{{\rm{Ch}}}
\nc{\mfg}{\mathfrak{g}}
\nc{\CS}{{\rm{CS}}}
\nc{\bS}{\mathbb{S}}
\nc{\cO}{\mathcal{O}}
\nc{\RHom}{\rm{RHom}}
\nc{\LL}{\mathcal{L}}
\nc{\Id}{\mathrm{Id}}
\begin{document}

\begin{abstract}
There is a canonical derived Poisson structure on the universal enveloping algebra $\U\mfa$ of a (DG) Lie algebra $\mfa$ that is Koszul dual to a cyclic cocommutative (DG) coalgebra. Interesting special cases of this derived Poisson structure include (an analog of) the Chas-Sullivan bracket on string topology. We study how certain derived character of $\mfa$ intertwine this derived Poisson structure with the induced Poisson structure on the representation homology of $\mfa$. In addition, we obtain an analog of one of our main results for associative algebras.
\end{abstract}

\title{Cyclic pairings and derived Poisson structures}

\author{Ajay C. Ramadoss}
\address{Department of Mathematics,
Indiana University,
Bloomington, IN 47405, USA}
\email{ajcramad@indiana.edu}
\author{Yining Zhang}
\address{Department of Mathematics,
Indiana University,
Bloomington, IN 47405, USA}
\email{yinizhan@indiana.edu}
\maketitle

\section{Introduction}

Fix a field $k$ of characteristic $0$. In \cite{Go1}, Goldman discovered a symplectic structure on the $G$-character variety of the fundamental group $\pi$ of a Riemannian surface, where $G$ is a connected Lie group (for example, $\GL_n(\c)$). In \cite{Go2}, he further found a Lie bracket on the free $k$-vector space $k[\hat{\pi}]$ spanned by the conjugacy classes of $\pi$. It was shown that the natural trace maps
$$ \Tr_n\,:\,k[\hat{\pi}] \rar \cO[\Rep_n(\pi)]^{\GL_n} $$
are Lie algebra homomorphisms, where $\Rep_n(\pi)$ is the affine scheme parametrizing the $n$-dimensional representations of $\pi$. It was later understood that the above structure is a special case of a $\H_0$-Poisson structure on an associative algebra in the sense of \cite{CB}: such a structure on an associative algebra $A$ is given by a Lie bracket on $A/[A,A]$ satisfying some mild technical conditions. If $A$ has a $\H_0$-Poisson structure, there is an induced Poisson structure on the commutative algebra $\cO[\Rep_n(A)]^{\GL_n}$ for each $n$. Further, in this case, the canonical trace map
$$ \Tr_n\,:\,A/[A,A] \rar \cO[\Rep_n(A)]^{\GL_n} $$
is a Lie algebra homomorphism for each $n$. The notion of a $\H_0$-Poisson structure was extended in \cite{BCER} to arbitrary DG (augmented) associative algebras: a Poisson structure on $R\,\in\,\DGA_{k/k}$ is a Lie bracket on $R_\n\,:=\,R/(k+[R,R])$ satisfying the mild technical conditions referred to above. A {\it derived} Poisson structure on $A\,\in\,\DGA_{k/k}$ is a Poisson structure on some cofibrant resolution of $A$. In particular, a derived Poisson structure on $A$ gives a graded Lie bracket on the reduced cyclic homology $\rHC_\bullet(A)$ of $A$ such that the canonical higher character map,
$$ \Tr_n\,:\,\rHC_\bullet(A) \rar \HR_\bullet(A,n)^{\GL_n} $$
is a graded Lie algebra homomorphism for each $n$ (see \cite[Thm. 2]{BCER}), where $\HR_\bullet(A,n)$ stands for the representation homology parametrizing $n$-dimensional representations of $A$ (see Section \ref{SecPrelimRep} for the definition). 

When $A\,\in\,\DGA_{k/k}$ is Koszul dual to a {\it cyclic} DG coalgebra $C$, then $A$ acquires a canonical derived Poisson structure (see \cite[Thm. 15, Lem. 8]{BCER}). The general properties of such derived Poisson structures have been studied in \cite{CEEY}. In this sequel to \cite{BRZ}, we study the behaviour of certain derived character maps of a (DG) Lie algebra $\mfa$ with respect to a canonical derived Poisson structure on the universal enveloping algebra $\mathcal U\mfa$ acquired from a cyclic pairing on the Koszul dual coalgebra of $\mfa$.  In order to state our main results, we recall that a natural direct sum decomposition of the (reduced) cyclic homology $\rHC_{\bullet}(\U\mfa)$ that is Koszul dual to the Hodge (or $\lambda$-) decomposition of cyclic homology of commutative algebras was found in \cite{BFPRW} (also see \cite{Ka}):
\begin{equation} \la{IntroHodge} \rHC_\bullet(\U\mfa)\,\cong\,\bigoplus_{p=1}^{\infty} \HC_\bullet^{(p)}(\mfa)\ .\end{equation}
The direct summands of \eqref{IntroHodge} appeared in \cite{BFPRW} as domains of the {\it Drinfeld traces}. The Drinfeld trace  $\Tr_\g(P,\mfa)\,:\,\HC_\bullet^{(p)}(\mfa) \rar \HR_\bullet(\mfa,\g)$  associated with an invariant polynomial $P\,\in\,I^p(\g):=\Sym^p(\g^{\ast})^{\ad\,\g}$ is a certain derived character map with values in the representation homology of $\mfa$ in a finite dimensional Lie algebra $\g$ (see Section \ref{SecDrin} for a recapitulation of the construction). A natural interpretation of the strong Macdonald conjecture for a reductive Lie algebra $\g$ has been given in terms of the Drinfeld traces in \cite[Sec. 9]{BFPRW}. It was shown in \cite{BRZ} that if $\mfa$ is the Quillen model of a simply connected space $X$, then the summands of \eqref{IntroHodge} are the common eigenspaces of Frobenius operations on the $S^1$-equvariant homology $\H_\bullet^{S^1}(LX;k)$ of the free loop space $LX$ of $X$.

If $\mfa$ is Koszul dual to a {\it cyclic} cocommutative (DG) coalgebra $C$, then $\U\mfa$ acquires an associated derived Poisson structure. As a result, there is a Lie bracket on $\rHC_\bullet(\U\mfa)$. Such derived Poisson structures arise in topology: it is known that if $M$ is a closed simply connected manifold, there is a derived Poisson structure on $\U\mfa_M$ (where $\mfa_M$ is the Quillen model of $M$) that is associated with a cyclic pairing\footnote{This pairing is of degree $-n$, where $n=\dim M$.} on the Lambrechts-Stanley model of $M$ (see \cite[Sec. 5.5]{BCER}). The induced Lie bracket\footnote{This bracket is of degree $2-n$.} on $\rHC_{\bullet}(\U\mfa_M)\,\cong\,\overline{\H}_\bullet^{S^1}(LM;k)$ corresponds to the Chas-Sullivan bracket on
string topology. This Poisson structure on $\U\mfa_M$ induces a graded Poisson structure on $\H_{\bullet}(\U\mfa_M)\,\cong\,\H_\bullet(\Omega M;k)$ (of degree $2-\dim M$), while the Goldman bracket may be seen as a Poisson structure on $\H_\bullet(\Omega \Sigma;k)\,\cong\, k[\pi]$. The above Poisson structure on $\U\mfa_M$ may therefore 
be viewed as an analog of the Goldman bracket. In \cite{BRZ}, it was shown that the above cyclic Poisson structure on $\U\mfa$ preserves the Hodge filtration
$$ F_p\rHC_\bullet(\U\mfa)\,:=\, \bigoplus_{r \leqslant p+2} \HC_\bullet^{(r)}(\mfa)\,,$$
thus making $\rHC_\bullet(\U\mfa)$ a filtered Lie algebra. Moreover, in general,
$$ \:\{\HC_{\bullet}^{(2)}(\mfa),\HC_\bullet^{(p)}(\mfa)\}\subseteq   \HC_\bullet^{(p)}(\mfa)\,,$$
making $\HC_\bullet^{(p)}(\mfa)$ a graded Lie module over $\HC_\bullet^{(2)}(\mfa)$. If, in addition, $\g$ is reductive, there is a derived Poisson structure on the (homotopy commutative DG algebra) $\DRep_\g(\mfa)$ representing the derived scheme parametrizing the representations of $\mfa$ in $\g$. This induces a graded Poisson structure on the representation homology $\HR_\bullet(\mfa,\g)$. In the case when $\mfa=\mfa_M$, this Poisson structure is of degree $2-n$, where $n=\dim M$ (see \cite{BCER,BRZ}).

In this context, it is natural to ask how the Drinfeld traces intertwine these structures. A beginning in this direction was made in \cite{BRZ} where it was shown that the Drinfeld trace $\Tr_\g(\mfa)\,:\,\HC^{(2)}_\bullet(\mfa) \rar \HR_\bullet(\mfa,\g)$ corresponding to the Killing form is a graded Lie algebra homomorphism. The map $\Tr_\g(\mfa)$ therefore equips $\HR_\bullet(\mfa,\g)$ with the structure of a graded Lie module\footnote{In what follows, all Lie brackets as well as Lie module structures are of homological degree $n+2$, where $n$ is the degree of the cyclic pairing on the Koszul dual coalgebra.} over $\HC_\bullet^{(2)}(\mfa)$. The following theorem is our first main result.
\bthm \la{IntroThm1}
{(= Theorem \ref{intertwining})}
For any $P\,\in\,I^p(\g)$, the Drinfeld trace $\Tr_\g(P,\mfa)\,:\,\HC_\bullet^{(p)}(\mfa) \rar \HR_\bullet(\mfa,\g)$ is a homomorphism of graded $\HC^{(2)}_\bullet(\mfa)$-modules.
\ethm 

Recall that 
$$ \overline{\HH}_\bullet(\U\mfa)\,\cong\, \H_{\bullet}(\mfa;\U\mfa)\,\cong\, \bigoplus_{p=0}^{\infty}\H_\bullet(\mfa;\Sym^p(\mfa))\ ,$$
where $\mfa$ acts on $\U\mfa$ and the $\Sym^p(\mfa)$ via the adjoint action (see \cite[Thm. 3.3.2]{L}). Let $\HH^{(p)}_\bullet:=\H_\bullet(\mfa;\Sym^p(\mfa))$. It was shown in \cite{BRZ} that the Connes differential $B\,:\,\rHC_\bullet(\U\mfa) \rar \overline{\HH}_{\bullet+1}(\U\mfa)$ restricts to a map $B\,:\,\HC^{(p)}_\bullet(\mfa) \rar \HH^{(p-1)}_{\bullet+1}(\mfa)$ for all $p \geqslant 1$. In this paper, we extend the construction of the Drinfeld traces to give a map $\Tr_\g(P,\mfa)\,:\,\HH^{(p)}_{\bullet+1}(\mfa) \rar \H_\bullet[\Omega^1(\DRep_\g(\mfa))]$ for any $P\,\in\,I^{p+1}(\g)$, where $\Omega^1(\DRep_\g(\mfa))$ stands for the DG module of K\"{a}hler differentials of any (cofibrant) commutative DG algebra representing the derived affine scheme $\DRep_\g(\mfa)$. Assume that $\mfa$ is Koszul dual to a cyclic cocommutative DG coalgebra $C$. In this case, it was shown in \cite{BRZ} that $\HH^{(p)}_\bullet(\mfa)$ is a graded Lie module over $\HC^{(2)}_\bullet(\mfa)$ for all $p$, and that $B\,:\,\HC^{(p)}_\bullet(\mfa) \rar \HH^{(p-1)}_{\bullet+1}(\mfa)$ is a graded $\HC^{(2)}_\bullet(\mfa)$-module homomorphism. Our next result extends Theorem \ref{IntroThm1} as follows.
\bthm \la{IntroThm2}
{(= Theorem \ref{secondintertwining})}
For any $P\,\in\,I^{p+1}(\g)$, there is a commuting diagram of graded Lie modules over $\HC^{(2)}_\bullet(\mfa)$
$$\begin{diagram}
\HC^{(p+1)}_{\bullet}(\mfa) & \rTo^B & \HH^{(p)}_{\bullet+1}(\mfa)\\
   \dTo^{\Tr_\g(P,\mfa)}   & & \dTo^{\Tr_\g(P,\mfa)}\\
 \HR_\bullet(\mfa,\g) & \rTo^d & \H_\bullet[\Omega^1(\DRep_\g(\mfa))]\\
\end{diagram}\,,$$
where the horizontal arrow in the bottom of the above diagram is induced by the universal derivation.
\ethm

Next, we consider (augmented) associative (DG) algebras that are Koszul dual to cyclic coassociative (conilpotent, DG) coalgebras. In this case, $\rHC_{\bullet}(A)$ is a graded Lie algebra, over which $\overline{\HH}_{\bullet+1}(A)$ is a graded Lie module. In line with the Kontsevich-Rosenberg principle in noncommutative geometry, $\rHC_{\bullet}(A)$ should be seen as a {\it derived space of functions} on `$\mathrm{Spec}\,A$'. Similarly, $\overline{\HH}_{\bullet+1}(A)$ should be viewed as a {\it derived space of $1$-forms} on `$\mathrm{Spec}\,A$' (see \cite[Sec. 5]{BKR}). The Connes differential $B$ is an analog of the de Rham differential. Given that the the module of $1$-forms of a (commutative) Poisson algebra is a Lie module over that algebra itself, with the universal derivation being a Lie module homomorphism, it is natural to expect that $\overline{\HH}_{\bullet+1}(A)$ to be a graded Lie module over $\rHC_{\bullet}(A)$, with $B$ being a Lie module homomorphism. Indeed, by \cite[Thm. 1.2]{CEEY}, $B\,:\, \rHC_{\bullet}(A) \rar \overline{\HH}_{\bullet+1}(A)$ is a homomorphism of graded Lie modules over $\rHC_{\bullet}(A)$. The Kontsevich-Rosenberg principle also leads one to expect the trace maps $\Tr_n$ to induce the classical Poisson structures on $\DRep_n(A)$ and its space of $1$-forms. Confirming this expectation, we prove the following associative analog of Theorem \ref{IntroThm2}, which was stated in \cite{CEEY} (see {\it loc. cit.}, Theorem 1.3) without proof.
\bthm \la{IntroThm3}
{(= Theorem \ref{assocCD})}
There is a commutative diagram of $\rHC_{\bullet}(A)$-module homomorphisms
$$
\begin{diagram}
\rHC_{\bullet}(A) & \rTo^B & \overline{\HH}_{\bullet+1}(A)\\
 \dTo^{\Tr_n}   & & \dTo^{\Tr_n}\\
 \HR_\bullet(A,n) & \rTo^{d} & \H_{\bullet}[\Omega^1(\DRep_n(A))]\\
 \end{diagram}\ .$$
\ethm

Our final result is a common generalization of \cite[Thm. 2]{BCER} (for derived Poisson structures induced by cyclic pairings) and \cite[Thm. 5.1]{BRZ}. Following \cite{BCER,CB,G}, we define the notion of a Poisson structure for an algebra over a (finitely generated) cyclic binary quadratic operad $\mathcal P$ and show that if $A$ is a $\mathcal{P}$-algebra that is Koszul dual to a cyclic coalgebra $C$ over the (quadratic) Koszul dual operad $\mathcal Q$, then $A$ acquires a derived Poisson structure. Further, if $\mathscr{S}$ is a finite dimensional $\mathcal{P}$-algebra with a nondegenerate cyclic pairing, then the representation homology $\HR_\bullet(A,\mathscr{S})$ acquires a graded Poisson structure such that a certain canonical trace map from the $\mathcal{P}$-cyclic homology $\HC_\bullet(\mathcal P,A)$ of $A$ to $\HR_\bullet(A,\mathscr{S})$ is a graded Lie algebra homomorphism (see Theorem \ref{operadpoiss}).

\subsection*{Acknowledgements} {\footnotesize We would like to thank Yuri Berest, Ayelet Lindenstrauss, Tony Pantev and Vladimir Turaev for interesting discussions. The first author is grateful to the Department of Mathematics, University of Pennsylvania for conducive working conditions during his visit in the summer of 2018. The work of the first author was partially supported by NSF grant DMS 1702323.}

\section{Preliminaries} \la{SecPrelim}

In this section we review derived representation schemes and derived Poisson structures.
 
\subsection{Derived representation schemes} \la{SecPrelimRep}

We begin by reviewing derived representation schemes of associative and Lie algebras. Let $\DGA_{k}$ (resp., $\cDGA_{k}$) denote the category of associative (resp., commutative) DG $k$-algebras. Let $\M_n(k)$ denote the algebra of $n \times n$ matrices with entries in $k$.

\subsubsection{Associative algebras}  Consider the functor
\begin{equation} \la{repfunctor} (\mbox{--})_n\,:\, \DGA_{k/k} \rar \cDGA_{k/k} \,,\,\,\,\, A \mapsto [(A \ast_k \M_n(k))^{\M_n(k)}]_{\n\n}\,, \end{equation}
where $(A \ast_k \M_n(k))^{\M_n(k)}$ denotes the subalgebra of elements in the free product $A \ast_k \M_n(k)$ that commute with every element of $\M_n(k)$ and where $(\mbox{--})_{\n\n}$ denotes abelianization. Note that if $A$ is augmented, then $A_n$ has a natural augmentation coming from \eqref{repfunctor} applied to the augmentation map of $A$. This defines a functor $\DGA_{k/k} \rar \cDGA_{k/k}$ from the category of augmented associative DG algebras to the category of augmented commutative DG algebras, which we again denote by $(\mbox{--})_n$.

Recall that $\DGA_{k/k}$ and $\cDGA_{k/k}$ are model categories where the weak equivalences are the quasi-isomorphisms and the fibrations are the degree-wise surjections. Let $\M'_n(\mbox{--})\,:\,\cDGA_{k/k} \rar \DGA_{k/k}$ denote the functor $B \mapsto k \oplus \M_n(\bar{B})$. The functors $(\mbox{--})_n\,:\,\DGA_{k/k} \rightleftarrows \cDGA_{k/k}\,:\,\M'_n(\mbox{--})$ form a (Quillen) adjoint pair.

Thus, $A_n$ is the commutative (DG) algebra corresponding to the (DG) scheme $\Rep_n(A)$ parametrizing the $n$-dimensional representations of $A$. Since the functor $(\mbox{--})_n$ is left Quillen, it has a well behaved left derived functor
$$ \L(\mbox{--})_n\,:\,\Ho(\DGA_{k/k}) \rar \Ho(\cDGA_{k/k})\,\text{.}$$
Like for any left derived functor, we have $\L(A)_n\,\cong\,R_n$ in $\Ho(\cDGA_{k/k})$, where
$R \stackrel{\sim}{\rar} A$ is any cofibrant resolution in $\DGA_{k/k}$. We define
$$\DRep_n(A)\,:=\, \L(A)_n\,\text{ in } \Ho(\cDGA_{k/k})\,,\,\,\,\,\HR_\bullet(A,n)\,:=\,\H_\bullet[\L(A)_n]\,\text{.}$$
$\DRep_n(A)$ is called the {\it derived representation algebra} for $n$-dimensional representations of $A$. The homology $\HR_{\bullet}(A,n)$ is called the {\it representation homology} parametrizing $n$-dimensional representations of $A$. It is easy to verify that $\GL_n(k)$ acts naturally by automorphisms on the graded (commutative) algebra $\HR_\bullet(A,n)$. We denote the corresponding (graded) subalgebra of $\GL_n(k)$-invariants by $\HR_\bullet(A,n)^{\GL}$.

Let $R \stackrel{\sim}{\rar} A$ be a cofibrant resolution. The unit of the adjunction $(\mbox{--})_n\,:\,\DGA_{k/k} \rightleftarrows  \cDGA_k\,:\,\M'_n(\mbox{--})$ is the universal representation
$$ \pi_n\,:\,R \rar \M'_n(R_n) \hookrightarrow \M_n(R_n)\,\text{.}$$
It is not difficult to verify that the composite map
$$\begin{diagram} R & \rTo^{\pi_n} & \M_n(R_n) & \rTo^{\id \otimes \Tr_n} R_n \end{diagram} $$
vanishes on $[R,R]$ and that the image of the above composite map is contained in $R_n^{\GL}$. The above composite map therefore induces a map of complexes
$$ \Tr_n\,:\,R/(k+[R,R]) \rar R_n^{\GL}\,,$$
which on homologies gives the {\it derived character map}
$$\Tr_n\,:\,\rHC_\bullet(A) \rar \HR_\bullet(A,n)^{\GL}\ .$$

\subsubsection{Lie algebras} Let $\g$ be a finite dimensional Lie algebra. Consider the functor
$$(\mbox{--})_{\g} \,:\, \DGL_k \rar \cDGA_{k/k}\,, \,\,\,\,\mfa \,\mapsto \, \mfa_{\g}\,:=\, \frac{\Sym_k(\mfa \otimes \g^{\ast})}{\langle\langle ( x \otimes \xi_1).(y \otimes \xi_2) -(-1)^{|x||y|}(y \otimes \xi_1).(x \otimes \xi_2) -[x,y] \otimes \xi \rangle \rangle} \,,$$
where $\DGL_k$ is the category of DG Lie algebras over $k$, $\g^{\ast}$ is the vector space dual to $\g$ and where $\xi \mapsto \xi_1 \wedge \xi_2$ is the map dual to the Lie bracket on $\g$. The augmentation on $\mfa_\g$ is the one induced by the map taking the generators $\mfa \otimes \g^{\ast}$ to $0$. Let $\g({\mbox{--}})\,:\, \cDGA_{k/k} \rar \DGL_k$ denote the functor $B \mapsto \g(\bar{B})\,:=\, \g \otimes \bar{B}$. Recall that $\DGL_k$ is a model category where the weak-equivalences are the quasi-isomorphisms and the fibrations are the degree-wise surjections. It is shown in~\cite[Section 6.3]{BFPRW} that the functors $(\mbox{--})_{\g}\,:\, \DGL_k \rightleftarrows \cDGA_{k/k}\,:\, \g({\mbox{--}})$ form a (Quillen) adjoint pair.

Thus, $\mfa_\g$ is the commutative (DG) algebra corresponding to the (DG) scheme $\Rep_\g(\mfa)$ parametrizing representations of $\mfa$ in $\g$. Since the functor $(\mbox{--})_{\g}$ is left Quillen, it has a well behaved left derived functor
$$\L(\mbox{--})_{\g}\,:\,\Ho(\DGL_k) \rar \Ho(\cDGA_{k/k})\,\text{.}$$
Like for any left derived functor, we have  $\L(\mfa)_\g\,\cong\, \mathcal L_\g $ in $\Ho(\cDGA_{k/k})$, where $\mathcal L \stackrel{\sim}{\rar} \mfa$ is any cofibrant resolution in $\DGL_k$. We define
$$\DRep_\g(\mfa)\,:=\, \L(\mfa)_\g \, \text{ in } \Ho(\cDGA_{k/k})\,,\,\,\,\,\HR_{\bullet}(\mfa,\g)\,:=\, \H_{\bullet}[\L(\mfa)_\g]\,\text{.}$$
$\DRep_\g(\mfa)$ is called the {\it derived representation algebra} for representations of $\mfa$ in $\g$. The homology $\HR_{\bullet}(\mfa,\g)$ is called the {\it representation homology} of $\mfa$ in $\g$. It is not difficult to check that $\g$ acts naturally by derivations on the graded (commutative) algebra $\HR_{\bullet}(\mfa,\g)$. We denote the corresponding (graded) subalgebra of $\g$-invariants by $\HR_{\bullet}(\mfa,\g)^{\ad\,\g}$.

\subsection{Derived Poisson structures} The notion of a derived Poisson algebra was first introduced in \cite{BCER}, as a higher homological extension of the notion of a $\H_0$-Poisson algebra introduced by Crawley-Boevey in \cite{CB}.

\subsubsection{Definitions}
\la{Defs}
Let $ A$ be an (augmented) DG algebra. The space  $\DER(A)$  of graded $k$-linear derivations of $A$
is naturally a DG Lie algebra with respect to the commutator bracket. Let $\DER(A)^\n $ denote the subcomplex of  $\DER(A) $
comprising derivations with image in $\,k+[A,A] \subseteq A \,$. It is easy to see that $ \DER(A)^\n $ is a DG Lie ideal of $ \DER(A) $,
so that $\,\DER(A)_{\natural} := \DER(A)/\DER(A)^\n $ is a DG Lie algebra.  The natural action of $ \DER(A) $ on $A$ induces a Lie algebra
action of $ \DER(A)_\n $ on the quotient space $ A_\n := A/(k+[A,A]) $. We write $\, \varrho:  \DER(A)_\n \to \END(A_\n) \,$ for the
corresponding DG Lie algebra homomorphism.

Now,  following \cite{BCER},  we define a {\it Poisson structure} on $A$ to be  a DG Lie algebra structure on $\,A_\n \,$ such that the adjoint representation
$ \mbox{\rm ad}:\, A_\n \to \END(A_\n) $ factors through $ \varrho \,$: i.~e., there is a morphism of DG Lie algebras $\,\alpha :\, A_\n \rar \DER(A)_{\natural}\,$ such that $\,\mbox{\rm ad} = \varrho \circ \alpha \,$.  It is easy to see that if $A$ is a commutative DG algebra, then a Poisson structure on $A$ is
the same thing as a (graded) Poisson bracket on $A$. On the other hand, if $A$ is an ordinary $k$-algebra (viewed as a DG algebra), then a Poisson structure on
$A$ is precisely a ${\rm H}_0$-Poisson structure in the sense of \cite{CB}.

Let $A$ and $B$ be two Poisson DG algebras, i.e. objects of $\DGA_{k/k}$ equipped with Poisson structures.
A {\it morphism} $\,f:\, A \rar B $ of Poisson algebras is then a morphism $ f: A \to  B $ in $\DGA_{k/k} $ such that $ f_{\natural}:\, A_{\natural} \rar B_{\natural} $ is a morphism of DG Lie algebras. With this notion of morphisms, the Poisson DG algebras form a category which we denote $\mathtt{DGPA}_k $.
Note that $\mathtt{DGPA}_k $ comes  with two natural functors: the forgetful functor $ U:\, \mathtt{DGPA}_k  \to \DGA_{k/k} $ and the cyclic functor
$ (\,\mbox{--}\,)_\n : \mathtt{DGPA}_k  \to \DGL_k $. We say that a morphism  $ f $ is a {\it weak equivalence} in $ \mathtt{DGPA}_k $
if $ Uf $ is a weak equivalence in $ \DGA_{k/k} $ and $ f_\n $ is a weak equivalence in $ \DGL_k $; in other words, a weak equivalence in $\mathtt{DGPA}_k $
is a quasi-isomorphism of DG algebras,   $ f: A \to B \,$,  such that the induced map  $ f_{\n}\,:\,A_\n \rar B_\n $ is a quasi-isomorphism of DG Lie algebras.

Although we do not know at the moment whether the category $ \mathtt{DGPA}_k $ carries a Quillen model structure (with weak equivalences specified above), it has a weaker property of being a saturated homotopical category  in the sense of Dwyer-Hirschhorn-Kan-Smith \cite{DHKS} (see \cite[Sec. 3.1]{BRZ}). This allows one to define a well-behaved homotopy category of Poisson algebras and consider derived functors on $ \mathtt{DGPA}_k$: we define the homotopy category
$$ \Ho(\mathtt{DGPA}_k)\,:=\, \mathtt{DGPA}_k[\mathscr{W}^{-1}]\,,$$
where $\mathscr{W}$ is the class of weak equivalences.

\vspace{1ex}

\begin{definition}
By a {\it derived Poisson algebra} we mean a cofibrant associative DG algebra $A$ equipped with a
Poisson structure  (in the sense of Defintion~\ref{Defs}), which is viewed up to weak equivalence,
i.e. as an object in $ \Ho(\mathtt{DGPA}_k) $.
\end{definition}
Since the complex $A_{\n}$ computes the (reduced) cyclic homology of a cofibrant DG algebra $A$, the (reduced) cyclic homology of a derived Poisson algebra $A$ carries a natural structure of a graded Lie algebra (see \cite[Prop. 3.3]{BRZ}).

Another important result of \cite{BCER} that holds for the derived Poisson algebras in
$ \Ho(\mathtt{DGPA}_k) $ and that motivates our study of these objects is the following

\begin{theorem}[{\it cf.} \cite{BCER}, Theorem~2]
\la{t3s2int}
If $A$ is a derived Poisson DG algebra, then, for any $n$,
there is a unique graded Poisson bracket on the representation homology $ \HR_\bullet(A,n)^{\GL} $, such that the derived character map
$\,
\Tr_n:\, \rHC_\bullet(A) \to \HR_\bullet(A,n)^{\GL} $
is a Lie algebra homomorphism.
\end{theorem}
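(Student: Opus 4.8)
The plan is to reduce Theorem~\ref{t3s2int} to an explicit computation with the universal representation map and the concrete model of the bracket on $\rHC_\bullet(A)$ coming from a Poisson structure on a cofibrant resolution $R \xrightarrow{\sim} A$. Recall that a Poisson structure on $R$ is, by definition, a DG Lie bracket $\{-,-\}$ on $R_\n := R/(k+[R,R])$ together with a DG Lie algebra morphism $\alpha: R_\n \to \DER(R)_\n$ lifting the adjoint action, i.e. $\mathrm{ad} = \varrho\circ\alpha$. First I would fix, for each $\bar r \in R_\n$, an actual derivation $\alpha_r \in \DER(R)$ representing $\alpha(\bar r)$; by the defining property of $\DER(R)_\n$ this derivation is well-defined modulo derivations valued in $k+[R,R]$, and $\{\bar r, \bar s\} = \overline{\alpha_r(s)}$ in $R_\n$. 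The key point to establish is that functoriality of $(\mbox{--})_n$ turns the derivation $\alpha_r$ on $R$ into a derivation $(\alpha_r)_n$ on the commutative DG algebra $R_n$, and that this assignment is compatible with the trace map $\Tr_n : R_\n \to R_n^{\GL}$ in the sense that $\Tr_n(\overline{\alpha_r(s)}) = (\alpha_r)_n\big(\Tr_n(\bar s)\big)$.

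Second, I would define the bracket on $R_n^{\GL}$ (and hence on $\HR_\bullet(A,n)^{\GL}$ after passing to homology) by the formula $\{\Tr_n(\bar r), f\} := (\alpha_r)_n(f)$ for $f \in R_n^{\GL}$, and more generally extend it by the Leibniz rule, using that the elements $\Tr_n(\bar r)$, $\bar r \in R_\n$, generate $R_n^{\GL}$ as an algebra (this last fact is the standard generation statement for invariants of $\Rep_n$, used already in the associative case in \cite{BCER}). For this to be well-defined I must check: (i) independence of the choice of representing derivation $\alpha_r$ — this follows because a derivation of $R$ valued in $k+[R,R]$ maps, under $(\mbox{--})_n$ composed with $\mathrm{id}\otimes\Tr_n$, to a derivation of $R_n$ killing the generators $\Tr_n(\bar s)$, hence the zero derivation on $R_n^{\GL}$; (ii) the bracket is a biderivation and is skew-symmetric — skew-symmetry on generators comes from $\overline{\alpha_r(s)} = \{\bar r,\bar s\} = -(-1)^{|\bar r||\bar s|}\{\bar s,\bar r\}$, and the biderivation property is built into the construction; (iii) the Jacobi identity — here one uses that $\alpha$ is a \emph{morphism of DG Lie algebras}, so $\alpha_{\{r,s\}}$ agrees with $[\alpha_r,\alpha_s]$ modulo inner/$\n$-derivations, which after applying $(\mbox{--})_n$ forces the Jacobi identity on generators, and then it propagates to all of $R_n^{\GL}$ by the Leibniz rule. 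Compatibility with the differential is automatic since everything in sight is a DG morphism. Finally, passing to homology gives a graded Poisson bracket on $\HR_\bullet(A,n)^{\GL}$ making $\Tr_n$ a graded Lie algebra homomorphism essentially by construction; uniqueness follows since the bracket is determined on the generators $\Tr_n(\bar r)$ by the homomorphism property together with the Leibniz rule. One must also check independence of the cofibrant resolution $R$: a weak equivalence $R \xrightarrow{\sim} R'$ in $\mathtt{DGPA}_k$ induces compatible quasi-isomorphisms on $R_\n$ and (by the Quillen adjunction, applied to a further cofibrant replacement if needed) on the representation algebras, intertwining the trace maps and the brackets, so the induced structure on $\HR_\bullet(A,n)^{\GL}$ is canonical.

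I expect the main obstacle to be item (iii), the Jacobi identity, or rather the careful bookkeeping of the ambiguity in the lift $\alpha$: the identity $\alpha_{\{r,s\}} = [\alpha_r,\alpha_s]$ holds only in $\DER(R)_\n$, i.e. up to derivations valued in $k+[R,R]$, and one has to argue that this ambiguity is invisible after applying the functor $(\mbox{--})_n$ and restricting to $R_n^{\GL}$. The same mechanism used for well-definedness — that $\n$-derivations of $R$ induce the zero derivation on $R_n^{\GL}$ because they kill all the algebra generators $\Tr_n(\bar s)$ — resolves this, but it requires the generation statement for $R_n^{\GL}$ and a small computation showing that $(\mbox{--})_n$ carries a derivation of $R$ valued in $k+[R,R]$ to a derivation of $R_n$ valued in (the ideal generated by) $k + [\mathrm{Mat}_n, \mathrm{Mat}_n]$-type elements whose traces vanish. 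This is exactly the content of the cited computations in \cite{BCER} in the $n$-dimensional matrix setting, so I would invoke \cite[Thm. 2]{BCER} for this step rather than reprove it; indeed the present theorem is precisely that result specialized to the case at hand, so the proof is really a matter of recalling the construction and observing that nothing in it used more than the hypothesis that $A$ be a derived Poisson algebra.
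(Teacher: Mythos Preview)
The paper does not give its own proof of this theorem: it is stated in the preliminaries section as a result recalled verbatim from \cite[Theorem~2]{BCER}, with no argument supplied. So there is nothing to compare your proposal against in the present paper.

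That said, your sketch is essentially the argument of \cite{BCER} (extending Crawley--Boevey's original construction in \cite{CB} to the DG setting): lift each $\alpha(\bar r)\in\DER(R)_\n$ to a genuine derivation of $R$, push it through the functor $(\mbox{--})_n$ to a derivation of $R_n$, observe that the ambiguity (a derivation valued in $k+[R,R]$) dies on $R_n^{\GL}$ because traces of commutators vanish, and then use that $R_n^{\GL}$ is generated by traces (Procesi, characteristic~$0$) to propagate skew-symmetry and Jacobi from generators. Your identification of the delicate point --- that $\alpha_{\{r,s\}}=[\alpha_r,\alpha_s]$ only holds modulo $\DER(R)^\n$, and that this ambiguity must be shown to vanish on invariants --- is exactly the crux of the argument in the cited references. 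Your final remark that the theorem ``is precisely that result specialized to the case at hand'' is the correct assessment: in this paper the statement is a citation, not a new theorem.
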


\subsubsection{Necklace Lie algebras}
\la{necklace}
The simplest example of a derived Poisson algebra is when $A=T_kV$, the tensor algebra generated by an even dimensional $k$-vector space $V$ equipped with a symplectic form $\langle \mbox{--},\mbox{--} \rangle\,:\, V \times V \rar V$. In this case, $A$ acquires a double Poisson structure in the sense of \cite{VdB}. The double bracket
$$\{\!\{ \mbox{--},\mbox{--}\}\!\}\,:\,\bar{A} \otimes \bar{A} \rar A \otimes A $$
is given by the formula
\begin{align} \la{ndbr}
\begin{aligned}
 &\{\!\{(v_1, \ldots, v_n), (w_1, \ldots, w_m)\}\!\}\,=\, \\
 &\sum_{\stackrel{i=1,\ldots,n}{j = 1,\ldots, m}}  \langle v_i, w_j \rangle  (w_1 ,\ldots, w_{j-1}, v_{i+1} ,\ldots, v_n)\otimes (v_1, \ldots, v_{i-1}, w_{j+1} ,\ldots, w_m) \,, \end{aligned} \end{align}
where $(v_1,\ldots,v_n)$ denotes the element $v_1 \otimes \ldots \otimes v_n \,\in\, T_kV$ for $v_1,\ldots,v_n\,\in\,V$. This double bracket can be extended to $A \otimes A$ by setting $\{\!\{a,1 \}\!\}=\{\!\{ 1,a\}\!\}=0$. The above double bracket induces a (derived) Poisson structure on $A$: the corresponding Lie bracket on $A_{\n}$ is given by the formula
$$\{ \bar{\alpha},\bar{\beta}\}\,=\, \overline{\mu \circ \{\!\{\alpha,\beta\}\!\}}\,, $$
where $\mu:A \otimes A \rar A$ is the product and where $\bar{a}$ denotes the image of $a \in A$ under the canonical projection $A \rar A_\n$. $A_\n\,=\,T_kV_{\n}$ equipped with the above Lie bracket is the well known necklace Lie algebra (see \cite{BL,G}).

\section{Koszul, Calabi-Yau algebras}

In this section, we recall results about derived Poisson structures on Koszul Calabi-Yau algebras from \cite{CEEY,BRZ}.

\subsection{Cyclic coalgebras}  \la{cyclic}

We now describe our basic construction of derived Poisson structures associated with cyclic coalgebras. Recall ({\it cf.}~\cite{GK}) that a graded associative $k$-algebra is called $n$-{\it cyclic} if it is equipped with a symmetric bilinear pairing $\langle \mbox{--},\mbox{--} \rangle\,:\, A \times A \rar k$ of degree $n$ such that
$$ \langle ab ,c \rangle \,=\,  \langle a, bc \rangle \,,\,\,\,\,\, \forall\,\, a,b,c\,\in\,A\,\text{.}$$
Dually, a graded coalgebra $C$ is called $n$-{\it cyclic} if it is equipped with a symmetric bilinear pairing $ \langle \mbox{--},\mbox{--} \rangle\,:\,  {C} \times {C} \rar k$ of degree $n$ such that
$$ \langle v', w\rangle v'' \,=\, \pm \langle v, w''\rangle w'\,,\,\,\,\,\,\, \forall\,\,v,w\,\in\,C,$$
where $v'$ and $v''$ are the components of the coproduct of $v$ written in the Sweedler notation. Note that if $A$ is a finite dimensional graded $-n$-cyclic algebra whose cyclic pairing is non-degenerate, then $C:=\Hom_k(A,k)$ is a graded $n$-cyclic coalgebra. A DG coalgebra $C$ is $n$-cyclic if it is $n$-cyclic as a graded coalgebra and
$$ \langle du, v \rangle \pm \langle u, dv \rangle \,=\, 0\,,$$
for all homogeneous $u,v \,\in\,{C}$, i.e, if $\langle \mbox{--}, \mbox{--} \rangle \,:\, {C}[n] \otimes C[n] \rar k[n]$ is a map of complexes. {\it By convention, we say that $C\,\in\,\DGC_{k/k}$ is $n$-cyclic if $\bar{C}$ is $n$-cyclic as a non-counital DG coalgebra}.

Assume that $C\,\in\,\DGC_{k/k}$ is equipped with a cyclic pairing of degree $n$ and let $R\,:=\,\cb(C)$ denote the (associative) cobar construction of $C$. Recall that $R\,\cong\, T_k(\bar{C}[-1])$  as a graded $k$-algebra. For $v_1,\ldots,v_n\,\in\,\bar{C}[-1]$, let $(v_1, \ldots ,v_n)$ denote the element $v_1 \otimes \ldots \otimes v_n$ of $R$. By~\cite[Theorem~15]{BCER}, the cyclic pairing on $C$ of degree $n$ induces a double Poisson bracket of degree $n+2$ (in the sense of~\cite{VdB})
$$\{\!\{ \mbox{--}, \mbox{--}\}\!\}\,:\, \bar{R} \otimes \bar{R} \rar {R} \otimes {R}$$
given by the formula
\begin{align} \la{dpbr}
\begin{aligned}
 &\{\!\{(v_1, \ldots, v_n), (w_1, \ldots, w_m)\}\!\}\,=\, \\
 &\sum_{\stackrel{i=1,\ldots,n}{j = 1,\ldots, m}} \pm \langle v_i, w_j \rangle  (w_1 ,\ldots, w_{j-1}, v_{i+1} ,\ldots, v_n)\otimes (v_1, \ldots, v_{i-1}, w_{j+1} ,\ldots, w_m) \,\text{.} \end{aligned} \end{align}
The above double bracket can be extended to $R \otimes R$ by setting $\{\!\{r,1\}\!\}\,=\, \{\!\{1,r\}\!\}=0$. Let $\{ \mbox{--}, \mbox{--}\}$ be the bracket associated to~\eqref{dpbr}:
\begin{equation} \la{bronr} \{ \mbox{--}, \mbox{--}\} \,:=\, \mu\,\circ\, \{\!\{ \mbox{--},\mbox{--}\}\!\}\,:\, {R} \otimes {R} \rar {R}\,,\end{equation}
 where $\mu$ is the multiplication map on ${R}$.  Let $\n\,:\, {R} \rar R_\n$ be the canonical projection and let $\{ \mbox{--}, \mbox{--}\}\,:\, \n \circ \{ \mbox{--}, \mbox{--}\}\,:\, {R} \otimes {R} \rar R_\n$. We recall that the bimodule ${R} \otimes {R}$ (with outer $R$-bimodule structure) has a double bracket (in the sense of~\cite[Defn. 3.5]{CEEY}) given by the formula
\begin{align*}
&\{\!\{ \mbox{--}, \mbox{--}\}\!\}\,\,:\,{R} \times ({R} \otimes {R}) \rar {R} \otimes ({R} \otimes {R}) \oplus ({R} \otimes {R}) \otimes {R}\,, \\
& \{\!\{r, p \otimes q\}\!\} \,:=\,  \{\!\{r,p\}\!\} \otimes q \oplus (-1)^{|p|(|r|+n)} p \otimes \{\!\{ r,q\}\!\} \,\text{.}
\end{align*}
This double bracket restricts to a double bracket on the sub-bimodule $\Omega^1R$ of $R \otimes R$ (~\cite[Corollary~5.2]{CEEY}). Let $\{\mbox{--}, \mbox{--}\}\,:\, R \otimes \Omega^1R \rar \Omega^1R$ be the map $\mu \circ \{\!\{ \mbox{--},\mbox{--}\}\!\}$, where $\mu$ is the bimodule action map and let $\{\mbox{--}, \mbox{--}\}\, :\, R \otimes \Omega^1R \rar \Omega^1R_{\n}$ denote the map $\n \circ \{ \mbox{--},\mbox{--}\}$.

 The bracket $\{ \mbox{--},\mbox{--}\}\,:\, {R} \otimes {R} \rar R_\n$ descends to a DG $(n+2)$-Poisson structure on $R$. In particular, it descends to a (DG) Lie bracket $\{\mbox{--},\mbox{--}\}_{\n}$  on $R_\n$ of degree $n+2$. The restriction of the bracket~\eqref{bronr} to $\bar{R}$ induces a degree $n+2$ DG Lie module structure over $R_\n$ on $\bar{R}$ and the bracket $\{\mbox{--},\mbox{--}\}\,:\,R \otimes \Omega^1R \rar \Omega^1R_\n$ induces a degree $n+2$ DG Lie module structure over $R_\n$ on $\Omega^1R_\n$ (see~\cite[Proposition~3.11]{CEEY}). On homologies, we have (see~
\cite{CEEY}, Theorem~1.1 and Theorem~1.2)
\bthm \la{liestronhom}
Let $A\,\in\,\DGA_{k/k}$ be an augmented associative algebra Koszul dual to $C\,\in\,\DGC_{k/k}$. Assume that $C$ is $n$-cyclic. Then, \\
$(i)$ $\rHC_{\bullet}(A)$ has the structure of a graded Lie algebra (with Lie bracket of degree $n+2$).\\
$(ii)$ $\overline{\HH}_{\bullet}(A)$ has a graded Lie module structure over $\rHC_{\bullet}(A)$ of degree $n+2$.\\
$(iii)$ The maps $S,B$ and $I$ in the Connes periodicity sequence are homomorphisms of degree $n+2$ graded Lie modules over $\rHC_{\bullet}(A)$.
\ethm
The Lie bracket of degree $n+2$ on $\rHC_{\bullet}(A)$ that is induced by a $(n+2)$-Poisson structure on $R_{\n}$ as above is an example of  a derived $(n+2)$-Poisson structure on $A$.\\

\subsubsection{} \la{convention}
\noindent
\textbf{Convention.} Since we work with algebras that are Koszul dual to $n$-cyclic coalgebras, all Lie algebras that we work with have Lie bracket of degree $n+2$. Similarly, all Lie modules are degree $n+2$ Lie modules. We therefore, drop the prefix ``degree $n+2$" in the sections that follow. Following this convention, we shall refer to (derived) $(n+2)$-Poisson structures as (derived) Poisson structures.

\subsection{Dual Hodge decomposition}

Given a Lie algebra $ \mfa $ over $k$, we consider
the symmetric ad-invariant $k$-multilinear forms on $ \mfa \,$ of a (fixed) degree $ p \ge 1 $. Every such form is induced from the universal
one: $\,\mfa \times \mfa \times \ldots \times \mfa \to \lambda^{(p)}(\mfa) \,$, which takes its values in the space $\,\lambda^{(p)}(\mfa)\,$ of coinvariants of the adjoint representation of $ \mfa $ in $ \Sym^p(\mfa)\,$.
The assignment $\,\mfa \mapsto \lambda^{(p)}(\mfa)\,$ defines a (non-additive) functor on the category of Lie algebras that extends in a canonical way to the category of DG Lie algebras:
\begin{equation}
\la{lam}
\lambda^{(p)}:\,\DGL_k \rar \Com_k \ ,\quad \mfa \mapsto \Sym^p(\mfa)/[\mfa, \Sym^p(\mfa)]\ .
\end{equation}
The category $ \DGL_k $ has a natural model structure (in the sense of Quillen \cite{Q1}), with
weak equivalences being the quasi-isomorphisms of DG Lie algebras. The corresponding homotopy (derived) category $ \Ho(\DGL_k) $ is obtained from $ \DGL_k $ by localizing at the class of
weak equivalences, i.e. by formally
inverting all the quasi-isomorphisms in $ \DGL_k $. The functor \eqref{lam}, however,
does {\it not} preserve quasi-isomorphisms and hence does not descend to the homotopy category
$ \Ho(\DGL_k) $. To remedy this problem, one has to replace $\,\lambda^{(p)}\,$ by its (left) derived functor
\begin{equation}
\la{Llam}
\L\lambda^{(p)}:\,\Ho(\DGL_k) \to \D(k)\ ,
\end{equation}
which takes its values in the derived category $ \D(k)  $
of $k$-complexes. We write  $\,\HC^{(p)}_{\bullet}(\mfa)\,$ for the homology of $\, \L\lambda^{(p)}(\mfa) \,$ and call it the {\it Lie-Hodge homology} of $ \mfa $.

For $ p = 1 $, the functor $ \lambda^{(1)} $ is just abelianization of
Lie algebras; in this case, the existence of $\, \L\lambda^{(1)} \,$ follows from
Quillen's general theory (see \cite[Chapter~II, \S 5]{Q1}), and
$\, \HC^{(1)}_{\bullet}(\mfa) \,$ coincides (up to shift in degree)
with the classical Chevalley-Eilenberg homology $ \H_\bullet(\mfa, k) $ of the Lie algebra $ \mfa $.
For $p=2$, the functor $ \lambda^{(2)} $ was introduced by Drinfeld \cite{Dr}; the existence of $ \L\lambda^{(2)} $ was established by Getzler and Kapranov \cite{GK} who
suggested that $
\HC^{(2)}_{\bullet}(\mfa) $ should be viewed as an (operadic) version of cyclic homology for Lie algebras.

Observe that each $ \lambda^{(p)} $ comes together with a natural transformation  to the composite functor
$ \U_\n := (\,\mbox{--}\,)_\n \circ \,\U:\,\DGL_k \to \DGA_{k/k} \to \Com_k $, where
$\U $
denotes the universal enveloping algebra functor on the category of (DG) Lie algebras.  The natural transformations $\,\lambda^{(p)} \to \U_\n \,$ are induced by the symmetrization maps
\begin{equation}
\la{symfun}
\Sym^p(\mfa) \to \U\mfa\ ,\quad x_1 x_2 \ldots x_p\, \mapsto\,
\frac{1}{p!}\,\sum_{\sigma \in {\mathbb S}_p}\, \pm \,x_{\sigma(1)} \cdot x_{\sigma(2)} \cdot
\ldots \cdot x_{\sigma(p)}\ ,
\end{equation}
which, by the Poincar\'e-Birkhoff-Witt Theorem, assemble to an isomorphism of  DG $\mfa$-modules
$\,\Sym_k(\mfa) \cong \U \mfa \,$. From this, it follows that   $\,\lambda^{(p)} \to \U_\n \,$ assemble to an isomorphism of functors
\begin{equation}
\la{eqv1}
\bigoplus_{p=1}^{\infty} \lambda^{(p)} \,\cong \, \U_\n \ .
\end{equation}
On the other hand, by a theorem of Feigin and Tsygan
\cite{FT} (see also \cite{BKR}), the functor $ (\,\mbox{--}\,)_\n $ has a left
derived functor $ \L(\,\mbox{--}\,)_\n:\, \Ho(\DGA_{k/k}) \to \D(k) $ that computes
the reduced cyclic homology $\,\rHC_\bullet(R)\,$ of an associative algebra
$ R \in \DGA_{k/k} $. Since $ \U $ preserves quasi-isomorphisms and maps cofibrant
DG Lie algebras to cofibrant DG associative algebras, the isomorphism \eqref{eqv1}
induces an isomorphism of derived functors from $ \Ho(\DGL_k) $ to $ \D(k) $:
\begin{equation}
\la{eqv2}
\bigoplus_{p=1}^{\infty}\, \L\lambda^{(p)}\, \cong\, \L(\,\mbox{--}\,)_\n \circ \,\U \ .
\end{equation}
At the level of homology, \eqref{eqv2} yields the direct decomposition ({\it cf.}~\cite[Theorem 7.2]{BFPRW}.

\begin{equation}
\la{hodgeds}
\rHC_{\bullet}(\U\mfa) \,\cong\,\bigoplus_{p=1}^{\infty}\, \HC^{(p)}_{\bullet}(\mfa)\ \text{.}
\end{equation}

As explained in \cite{BFPRW}, the existence of  \eqref{eqv2} is related to the fact that $ \U\mfa $ is a cocommutative Hopf algebra, and in a sense, the Lie Hodge decomposition \eqref{hodgeds} is Koszul
dual to the classical Hodge decomposition of cyclic homology for commutative algebras.

The Lie Hodge decomposition \eqref{hodgeds} also extends to (reduced) Hochschild homology (see \cite[Sec. 2.1]{BRZ}):
$$ \overline{\HH}_\bullet(\U\mfa)\,\cong\,\bigoplus_{p=0}^{\infty} \HH^{(p)}_\bullet(\mfa)\,\text{.}$$
Under Kassel's isomorphism $\overline{\HH}_\bullet(\U\mfa)\,\cong\,\H_\bullet(\mfa; \Sym(\mfa))$ (see \cite[Theorem 3.3.2]{L}), the summand $\HH^{(p)}(\mfa)$ is identified with $\H_\bullet(\mfa;\Sym^p(\mfa))$. The Connes periodity sequence for $\U\mfa$ decomposes into a direct sum of Hodge components (see \cite[Theorem 2.2]{BRZ}): the summand of Hodge degree $p$ is given by the long exact sequence
\begin{equation} \la{conneshodgep}  \begin{diagram}[small] \ldots & \rTo^S & \HC_{n-1}^{(p+1)}(\mfa) & \rTo^{B} & {\HH}_n^{(p)}(\mfa) & \rTo^I & \HC_n^{(p)}(\mfa) & \rTo^S & \HC_{n-2}^{(p+1)}(\mfa) & \rTo & \ldots \end{diagram} \, \text{.} \end{equation}
Note that $\mfa$ is Koszul dual to a cocommutative (coaugmented, conilpotent) DG coalgebra $C$ (for example, $C$ may be taken to be the Chevalley-Eilenberg coalgebra $\C(\mfa;k)$). Thus, $\U\mfa$ is Koszul dual to $C$ viewed as a coassociative DG coalgebra. When $C$ carries a cyclic pairing (of degree $n$), $\U\mfa$ acquires a derived Poisson structure, giving a Lie bracket (of degree $n+2$) on $\rHC_{\bullet}(\U\mfa)$. Further, in this case, $\overline{\HH}_\bullet(\U\mfa)$ has a graded Lie module structure over $\rHC_{\bullet}(\U\mfa)$ of degree $n+2$.  We have (see \cite[Theorems 3.3 and 3.4]{BRZ}):
\bthm \la{hdecomp}
For all $p$, the derved Poisson bracket on $\HC_\bullet(\U\mfa)$ equips the direct summand $\HC^{(p)}_\bullet(\mfa)$ with a graded Lie module structure over $\HC^{(2)}_\bullet(\mfa)$ (of degree $n+2$), i.e.,
$$\{\HC^{(2)}_\bullet(\mfa),\HC^{(p)}_{\bullet}(\mfa)\} \,\subset\,\HC^{(p)}_\bullet(\mfa)\ .$$
 Further, $\HH^{(p)}_\bullet(\mfa)$ is equipped with a graded Lie module structure over $\HC^{(2)}_\bullet(\mfa)$ (of degree $n+2$).
\ethm
There exists a cyclic cocommutative DG coalgebra Koszul dual to $\mfa$ for a large class of interesting examples: unimodular Lie algebras (of which semisimple Lie algebras are examples), Quillen models of simply connected manifolds, etc.

\subsection{Drinfeld traces and Poisson structures on representation algebras} \la{SecDrin}

\subsubsection{Drinfeld traces}
Let $\mathcal L \stackrel{\sim}{\rar} \mfa$ be a cofibrant resolution. The unit of the adjunction $(\mbox{--})_{\g}\,:\, \DGL_k \rightleftarrows \cDGA_{k/k}\,:\, \g({\mbox{--}})$ is the universal representation
$$ \pi_\g\,:\, \mathcal L \rar \g(\mathcal L_\g) \,\text{.}$$
Let $\lambda^{(p)}\,:\,\DGL_k \rar \mathtt{Com}_k$ be the functor $\mfa \mapsto \Sym^p(\mfa)/[\mfa,\Sym^p(\mfa)]$.  There is a natural map $\lambda^{(p)}[\g(\mathcal L_\g)] \rar \mathcal L_\g \otimes \lambda^{(p)}(\g)$. For $P\,\in\, I^p(\g)\,:=\,\Sym^p(\g^{\ast})^{\ad\,\g}$, evaluation at $P$ gives a linear functional $\mathrm{ev}_P$ on $\lambda^{(p)}(\g)$. One thus has the composite map
$$\begin{diagram} \lambda^{(p)}(\mathcal L) & \rTo^{\lambda^{(p)}(\pi_\g)} &  \lambda^{(p)}[\g(\mathcal L_\g)]  & \rTo&  \mathcal L_\g \otimes \lambda^{(p)}(\g) & \rTo^{\id \otimes \mathrm{ev}_P} & \mathcal L_\g \end{diagram} $$
for $P\,\in\,I^p(\g)$. On homologies, this gives the map
$$ \Tr_\g(P, \mfa)\,:\, \HC^{(p)}_{\bullet}(\mfa) \rar \HR_{\bullet}(\mfa,\g)^{\ad\,\g}\,,$$
which we call the {\it Drinfeld trace map } associated to $P$ (see~\cite[Section 7]{BFPRW} for further details regarding this construction). If $\g$ is semisimple, the Killing form is a canonical element of $I^2(\g)$. We denote the associated Drinfeld trace  by
$$\Tr_\g(\mfa)\,:\,\HC^{(2)}_{\bullet}(\mfa) \rar \HR_{\bullet}(\mfa,\g)^{\ad\,\g}\,\text{.}$$

\subsubsection{Cyclic Lie coalgebras}
Recall from \cite[Sec. 4.5]{GK} that a cyclic pairing $\langle \mbox{--},\mbox{--} \rangle$ of degree $n$ on a DG Lie algebra $\mfa$ is a symmetric, $\ad$-invariant pairing (of degree $n$) that is compatible with differential: compatibility with differential is equivalent to the assertion that $\langle \mbox{--},\mbox{--} \rangle\,:\, \mfa \otimes \mfa \rar k[-n]$ is a map of complexes.

Dually, a cyclic pairing of degree $n$ on a DG Lie coalgebra $\mathfrak{G}$ is a symmetric pairing compatible with differential satisfying
$$ x^1\langle x^2, y\rangle \,=\, \pm y^2 \langle x, y^1 \rangle \,$$
for all $x, y \,\in \,\mathfrak{G}$, where $]x[\,=\,x^1 \otimes x^2$, etc. in the Sweedler notation. It is not difficult to verify that if $\mfa$ is a finite dimensional DG Lie algebra with a non-degenerate cyclic pairing, then $\mfa^{\ast}$ is a DG Lie coalgebra with cyclic pairing (see \cite[Prop. 2.1]{Z}).

Recall that for a DG Lie coalgebra $\mathfrak{G}$, one has the Chevalley-Eilenberg algebra $\C^c(\mathfrak{G};k)$ which is the construction formally dual to the Chevalley-Eilenberg coalgebra $\C(\mfa;k)$ of a DG Lie algebra. In particular, $\C^c(\mathfrak{G};k)$ is an augmented, {\it commutative} DG algebra.
\blemma \la{poissce}
{\cite[Lemma 5.1]{BRZ}}
If $\mathfrak{G}\,\in\,\DGLC_k$ is equipped with a cyclic pairing of degree $n$, then the Chevalley-Eilenberg algebra $\C^c(\mathfrak{G};k)$ acquires a DG Poisson structure of degree $n+2$.
\elemma
Indeed,  $\C^c(\mathfrak{G};k)\,\cong\, \Sym(\mathfrak{G}[-1])$ as a graded algebra. The symmetric pairing of degree $n$ on $\mathfrak{G}$ gives a skew-symmetric pairing on $\mathfrak{G}[-1]$ of degree $n+2$. This in turn, gives the required graded Poisson structure on  $\Sym(\mathfrak{G}[-1])$. Cyclicity ensures that this structure is compatible with the differential on $\C^c(\mathfrak{G};k)$.

\subsubsection{Poisson structures} \la{prep}

Let $\mfa \,\in\,\DGL_k$ be Koszul dual to $C\,\in\,\cDGC_{k/k}$. Assume that $C$ is equipped with a cyclic pairing of degree $n$. By \cite[Theorem 6.7]{BFPRW},
$$ \DRep_\g(\mfa)\,\cong\, \C^c(\g^{\ast}(\bar{C});k)\ .$$
If $\g$ is semisimple, tensoring the cyclic pairing on $\bar{C}$ with the paring dual to the Killing form on $\g^{\ast}$ gives a cyclic pairing of degree $n$ on the DG Lie coalgebra $\g^{\ast}(\bar{C}):=\g^{\ast} \otimes \bar{C}$. It follows from Lemma \ref{poissce} that $\C^c(\g^{\ast}(\bar{C});k)$ has a DG Poisson structure of degree $n+2$. Thus, $\HR_\bullet(\mfa,\g)$ has a graded Poisson structure of degree $n+2$. By Theorem \ref{hdecomp}, $\HC^{(2)}_{\bullet}(\mfa)$ has a Lie bracket of degree $n+2$ arising from the derived Poisson structure on $\U\mfa$ corresponding to the cyclic pairing on $C$.
\bthm \la{liehomom}
{\cite[Theorem 5.1]{BRZ}}
The Drinfeld trace $\Tr_\g(\mfa)\,:\,\HC^{(2)}_\bullet(\mfa) \rar \HR_\bullet(\mfa,\g)$ corresponding to the Killing form on $\g$ is a homomorphism of graded Lie algebras.

\ethm

\section{Main results}

Throughout this section, unless stated otherwise, let $\mfa \,\in\, \DGL_k$ be Koszul dual to $C\,\in\,\cDGC_{k/k}$. Further assume that $C$ is equipped with a cyclic pairing of degree $n$. By Theorem \ref{hdecomp}, the corresponding derived Poisson structure on $\U\mfa$ equips $\rHC_\bullet(\U\mfa)$ with the structure of a graded Lie algebra (with the Lie bracket having degree $n+2$) of which $\HC^{(2)}_\bullet(\mfa)$ is a graded Lie subalgebra. Moreover, the corresponding Lie bracket equips each Lie Hodge summand $\HC^{(p)}_\bullet(\mfa)$ with the structure of a graded Lie module over $\HC^{(2)}_\bullet(\mfa)$.

\subsection{Intertwining theorem} Let $\g$ be a finite dimensional semisimple Lie algebra. By Theorem \ref{liehomom}, there is a Poisson structure  on $\HR_\bullet(\mfa,\g)$ such that the Drinfeld trace $\Tr_\g(\mfa)\,:\,\HC^{(2)}_\bullet(\mfa) \rar \HR_\bullet(\mfa,\g)$ corresponding to the Killing form on $\g$ is a homomorphism of graded Lie algebras. This equips $\HR_\bullet(\mfa,\g)$ with the structure of a graded Lie module over $\HC^{(2)}_\bullet(\mfa,\g)$.
\bthm \la{intertwining}
For any $P\,\in\,I^p(\g)$, the Drinfeld trace $\Tr_\g(P,\mfa)\,:\,\HC^{(p)}_\bullet(\mfa) \rar \HR_\bullet(\mfa,\g)$ is a homomorphism of graded Lie modules over $\HC^{(2)}_\bullet(\mfa)$.
\ethm
We recall some technicalities before proving Theorem \ref{intertwining}. Let $R\,:=\,\cb(C)\,\in\,\DGA_{k/k}$. Let $\mathcal L\,:=\,\cb_{\mathtt{Comm}}(C)\,\in\,\DGL_k$. Then $R\,\cong\,\U\mathcal L$. Recall (see Section \ref{cyclic}) that the cyclic pairing on $C$ equips $R$ with a double Poisson bracket, and therefore, a derived Poisson structure. In particular, $R_{\n}$ is a DG Lie algebra. By \cite[Prop. 3.11]{CEEY}, the bracket \eqref{bronr} equips $\bar{R}$ with the structure of a DG Lie module over $R_\n$, with $R_\n$ acting on $\bar{R}$ by derivations. The isomorphism of functors \eqref{eqv1} applied to $\mathcal L$ gives an isomorphism
\begin{equation} \la{decompforL} R_\n \,\cong\, \bigoplus_{p=1}^{\infty} \lambda^{(p)}(\mathcal L)\ . \end{equation}
By \cite[Prop. 3.4, Cor. 3.1]{BRZ},  $\lambda^{(2)}(\mathcal L)$ is a Lie subalgebra of $R_\n$, and each $\lambda^{(p)}(\mathcal L)$ is a $\lambda^{(2)}(\mathcal L)$-module. Again by \cite[Prop. 3.4, Cor. 3.1]{BRZ}, $\mathcal L$ is a $\lambda^{(2)}(\mathcal L)$-submodule of $R$, and the symmetrization map gives an isomorphism of $\lambda^{(2)}(\mathcal L)$-modules
$$ \Sym(\mathcal L) \,\cong\, R \,,$$
where the $\lambda^{(2)}(\mathcal L)$-action on $\mathcal L$ is extended to an action on $\Sym(\mathcal L)$ by derivations. By (the proof of) \cite[Thm. 6.7]{BFPRW},  $\mathcal L_\g \,=\,\C^c(\g^{\ast}(\bar{C});k)$. It follows that $\mathcal L_\g$  has a Poisson structure induced by the cyclic pairing pairing on $\g^{\ast}(\bar{C})$ obtained by tensoring the pairing on $\bar{C}$ with the Killing form on $\g$ (see Section \ref{prep}). By (the proof of) \cite[Thm. 5.1]{BRZ}, the trace $\Tr_\g(\mathcal L)\,:\,\lambda^{(2)}(\mathcal L) \rar \mathcal L_\g$ is a graded Lie algebra homomorphism. This equips $\mathcal L_\g$ with the structure of a graded Lie module over $\lambda^{(2)}(\mathcal L)$. Since $\mathcal L_\g$ is freely generated by $\g^{\ast} \otimes V$ as a graded commutative algebra, where $V:=\bar{C}[-1]$, $\Omega^1(\mathcal L_\g)\,\cong\, \mathcal L_\g \otimes \g^{\ast} \otimes V$ as a graded $\mathcal L_\g$-module. Let $\bar{\partial}\,:\,\lambda^{(2)}(\mathcal L) \rar \mathcal L \otimes V$ denote the cyclic derivative (see \cite[Lemma 6.2]{BRZ}) and let $d\,:\,\mathcal L_\g \rar \Omega^1({\mathcal L_\g})$ denote the universal derivation. The proof of Theorem \ref{intertwining} relies on the following Lemma, whose detailed proof we postpone.
\blemma \la{lderham}
{\cite[Lemma 5.2]{BRZ}}
The following diagram commutes:
$$
\begin{diagram}
\lambda^{(2)}(\mathcal{L}) & \rTo^{\bar{\partial}} & \mathcal{L} \otimes V & \rTo^{\pi_{\g} \otimes \Id} & \mathcal{L}_{\g} \otimes \g \otimes V\\
  & \rdTo_{\Tr_\g(\mathcal{L})} & & & \dTo_{\cong}  \\
& & \mathcal{L}_\g & \rTo^d & \Omega^1({\mathcal{L}_{\g}})\\
\end{diagram}
$$
Here, the vertical isomorphism on the right identifies $\g$ with $\g^{\ast}$ through the Killing form.
\elemma
\bprop \la{univrep}
The universal representation $\pi_\g\,:\,\mathcal{L} \rar \mathcal{L}_\g \otimes \g$ is a $\lambda^{(2)}(\mathcal L)$-module homomorphism, where $\lambda^{(2)}(\mathcal L)$ acts trivially on $\g$.
\eprop
\bproof

Since $\lambda^{(2)}(\mathcal L)$ acts on $R$ (resp., $\mathcal L_\g$) by derivations, it acts on the DG Lie algebras $\mathcal L$ (resp., $\mathcal L_\g \otimes \g$) by Lie derivations. Since $\mathcal L$ is freely generated as a graded Lie algebra by $V:=\bar{C}[-1]$, it suffices to verify that for any $\alpha\,\in\,\lambda^{(2)}(\mathcal L)$ and for any $u\,\in\,V$,
\begin{equation} \la{tocheck}\pi_\g(\{ \alpha, u\}) \,=\, \{ \Tr_\g(\mathcal L)(\alpha), \pi_\g(u)\}\ .\end{equation}
It follows from \eqref{dpbr} and \cite[Lemma 6.2]{BRZ} that the restriction of the action of $\lambda^{(2)}(\mathcal L)$ on $\mathcal L$ to the (graded) subspace $V$ of $\mathcal L$ is given by the composite map
$$
\begin{diagram}
\lambda^{(2)}(\mathcal{L})\otimes V & \rTo^{\bar{\partial}\otimes \id} & (\mathcal{L}\otimes V)\otimes V & \rTo & \mathcal{L}\otimes (V\otimes V) & \rTo^{\id\otimes \langle\mbox{--}\,,\,\mbox{--}\rangle} & \mathcal{L}\,,
\end{diagram}
$$
where $\bar{\partial}\,:\,\lambda^{(2)}(\mathcal L) \rar \mathcal L \otimes V$ is the cyclic derivative. Since the Poisson structure on $\mathcal L_\g$ arises from a skew symmetric pairing on $\g^{\ast} \otimes V$, the restriction of the action of $\mathcal L_\g$ on itself (via the Poisson bracket) to the (graded) subspace $\g^{\ast}  \otimes V$ is given by the composite map
$$ \begin{diagram} \mathcal L_\g \otimes \g^{\ast} \otimes V & \rTo^{d \otimes \Id}& \Omega^1({\mathcal L_\g}) \otimes \g^{\ast} \otimes V \,\cong\, \mathcal L_\g \otimes \g^{\ast} \otimes V \otimes \g^{\ast} \otimes V & \rTo^{\Id_{\mathcal L_\g} \otimes \langle \mbox{--},\mbox{--} \rangle} & \mathcal L_\g \end{diagram} \ .$$
Now, for all $v \,\in\,V$,
$$\pi_\g(v) \,=\, \sum_{\alpha} (\xi^{\ast}_{\alpha} \otimes v) \otimes \xi_{\alpha}\,\in\,\mathcal L_\g \otimes \g\,,  $$
where $\{\xi_{\alpha}\}$ is an orthonormal basis of $\g$ with respect to the Killing form and $\{\xi_{\alpha}^{\ast}\}$ is the dual basis on $\g^{\ast}$. In particular, $\pi_\g(V) \,\subset\,\g^{\ast} \otimes V \otimes \g$. Therefore, \eqref{tocheck} follows once we verify the commutativity of the following diagram:
\begin{equation} \la{tocheck2} \begin{diagram}[small]
\lambda^{(2)}(\mathcal{L}) \otimes V & \rTo^{\bar{\partial} \otimes \Id_V} & \mathcal{L} \otimes V  \otimes V & \rTo^{\pi_{\g} \otimes \Id} & \mathcal{L}_{\g} \otimes \g \otimes V \otimes V & \rTo^{\Id_{\mathcal L_\g \otimes \g} \otimes \langle \mbox{--},\mbox{--} \rangle}& \mathcal L_\g \otimes \g \\
  & \rdTo_{\Tr_\g(\mathcal{L}) \otimes \pi_\g} & & &  && \dTo^{\Id} \\
& & \mathcal{L}_\g \otimes \g^{\ast} \otimes V \otimes \g & \rTo^{d \otimes \Id_{\g^{\ast} \otimes V \otimes \g}} &  \mathcal L_\g \otimes \g^{\ast} \otimes V \otimes \g^{\ast} \otimes V \otimes \g & \rTo^{\Id_{\mathcal L_\g} \otimes \langle \mbox{--},\mbox{--} \rangle \otimes \Id_\g}& \mathcal L_\g \otimes \g \\
\end{diagram} \ .\end{equation}
By Lemma \ref{lderham}, and since the pairing on $\g^{\ast} \otimes V$ is the pairing on $V$ tensored with the (pairing dual to the) Killing form, the commutativity of \eqref{tocheck2} follows once we verify that for all $x\,\in\,\g$,
$$ \sum_{\alpha} \langle \eta(x), \xi^{\ast}_{\alpha} \rangle \xi_{\alpha}\,=\, x\,, $$
where $\eta$ denotes the identification of $\g$ with $\g^{\ast}$ via the Killing form. This is immediately seen for all $\xi_{\alpha}$, and hence, for all elements of $\g$. This completes the proof of the desired proposition.
\eproof
\bproof[Proof of Theorem \ref{intertwining}]

It follows from Proposition \ref{univrep} that the map $\Sym(\pi_\g) \,:\,\Sym(\mathcal L) \rar \Sym(\mathcal L_\g \otimes \g)$ is $\lambda^{(2)}(\mathcal L)$-equivariant, where the $\lambda^{(2)}(\mathcal L)$ action on $\mathcal L$ (resp., $\mathcal L_\g \otimes \g$) is extended to an action on $\Sym(\mathcal L)$ (resp., $\Sym(\mathcal L_\g \otimes \g)$) by derivations. In particular, for any $p$, the map $\Sym^p(\pi_\g)\,:\,\Sym^p(\mathcal L) \rar \Sym^p(\mathcal L_\g \otimes \g)$ is $\lambda^{(2)}(\mathcal L)$-equivariant. Note that $\mathcal L_\g$ acts on $\mathcal L_\g \otimes \g$ by Lie derivations and on $\mathcal L_\g \otimes \Sym(\g)$ by derivations: both actions are induced by the Poisson bracket on $\mathcal L_\g$ and the trivial $\mathcal L_\g$-action on $\g$. It can be easily verified that the canonical projection $\Sym(\mathcal L_\g \otimes \g) \rar \mathcal L_\g \otimes \Sym(\g)$ is $\mathcal L_\g$-equivariant, where the $\mathcal L_\g$-action on $\Sym(\mathcal L_\g \otimes \g)$ is obtained by extending the action on $\mathcal L_\g \otimes \g$ by derivations. Since the $\lambda^{(2)}(\mathcal L)$-action on $\mathcal L_\g$ factors through the Lie algebra homomorphism $\Tr_\g(\mathcal L)\,:\,\lambda^{(2)}(\mathcal L) \rar \mathcal L_\g$, the canonical projection $\Sym(\mathcal L_\g \otimes \g) \rar \mathcal L_\g \otimes \Sym(\g)$ is $\lambda^{(2)}(\mathcal L)$-equivariant. Hence, the composite map
\begin{equation} \la{predr} \begin{diagram} \Sym^p(\mathcal L) & \rTo^{\Sym^p(\pi_\g)}& \Sym^p(\mathcal L_\g \otimes \g) & \rTo & \mathcal L_\g \otimes \Sym^p(\g) \end{diagram}  \end{equation}
is $\lambda^{(2)}(\mathcal L)$-equivariant. It follows that the map $\lambda^{(p)}(\mathcal L) \rar \mathcal L_\g \otimes \lambda^{(p)}(\g)$, is $\lambda^{(2)}(\mathcal L)$-equivariant, since it fits into the commutative diagram
$$ \begin{diagram}
  \Sym^p(\mathcal L) & \rTo^{\eqref{predr}}& \mathcal L_\g \otimes \Sym^p(\g)\\
   \dOnto & & \dOnto\\
   \lambda^{(p)}(\mathcal L) & \rTo & \mathcal L_\g \otimes \lambda^{(p)}(\g)\\
   \end{diagram} \ .$$
 For any $P\,\in\,I^p(\g)$, the Drinfeld trace $\Tr_\g(P,\mathcal L)$ is given by composing the map $\lambda^{(p)}(\mathcal L) \rar \mathcal L_\g \otimes \lambda^{(p)}(\g)$ with evaluation at $P$. It follows that $\Tr_\g(P,\mathcal L)$ is $\lambda^{(2)}(\mathcal L)$-equivariant. Since $\mathcal L$ is a cofibrant resolution of $\mfa$, the desired theorem follows on homologies.
\eproof
We end this section with a tedious computation verifying Lemma \ref{lderham}.
\bproof[Proof of Lemma \ref{lderham}]
Note that every element in $\lambda^{(2)}(\mathcal L)$ can be expressed as a linear combination  of images of elements of the form $x \cdot w\,\in\, \Sym^2(\mathcal L)$ under the canonical projection $\Sym^2(\mathcal L) \twoheadrightarrow \lambda^{(2)}(\mathcal L)$, where $x=[v_{1},\,[v_{2},\,\cdots[v_{n-1},\,v_{n}]\cdots]]$ for some $v_1,\ldots,v_n \,\in\,V$ and $w\,\in\,V$. Here, $V:=\bar{C}[-1]$.
Observe that
\begin{eqnarray}\label{DRDtwo}
\bar{\partial} (x\cdot w)&=& \sum_{1\leqslant j \leqslant n}\pm[[v_{j+1},\,[v_{j+2},\,\cdots[v_{n-1},\,v_{n}]\cdots]],\,[[\cdots[w,\,v_{1}]\cdots,\,v_{j-2}],\,v_{j-1}]]\otimes v_{j}\nonumber\\
&&+ [v_{1},\,[v_{2},\,\cdots[v_{n-1},\,v_{n}]\cdots]]\otimes w\,\text{,}
\end{eqnarray}
where the signs are determined by the Koszul sign rule. To verify \eqref{DRDtwo}, recall that restriction of the  cyclic derivative $\bar{\partial}$ to $V^{\otimes n}$ is given by the composite map
$$ \begin{diagram} V^{\otimes n} & \rTo^{N\cdot(\mbox{--}) } & V^{\otimes n} & \rTo & V^{\otimes n-1}\otimes V \end{diagram}\,,$$
where $N=\sum_{i=0}^{n-1} \tau^{i}$ and where the last arrow is the obvious isomorphism that permutes no factors. Here, $\tau\,:\,V^{\otimes n} \rar V^{\otimes n}$ denotes the cyclic permutation $(v_1,\ldots,v_n) \mapsto \pm (v_2,\ldots, v_n, v_1)$, where the signs are given by the Koszul sign rule. Since $x \cdot w\,=\,\frac{1}{2}(xw +(-1)^{|x||w|} wx)$, and since $N$ (and hence, $\bar{\partial}$) vanish on commutators in $R:=\cb(C)$, $\bar{\partial}(x \cdot w)=\bar{\partial}(xw)$. It is easy to see that the summand of $N(xw)$ ending in $w$ is $xw$ itself. For $1 \leq i \leq n$, we note that
$$x \cdot w\,=\, \pm [v_i,z_1] \cdot z_2\,, $$
in $\lambda^{(2)}(\mathcal L)\,\subset\,R_\n$, where $z_1:= [v_{i+1},\,[v_{i+2},\,\cdots[v_{n-1},\,v_{n}]\cdots]]$ and $z_2:=[[\cdots[w,v_1]\cdots,v_{i-2}],v_{i-1}]$. It follows from \cite[Lemma A.1]{BR} that the summand of $N(x \cdot w)$ ending in $v_i$ is given by $ \pm [z_1,z_2] \otimes v_i $. This completes the verification of \eqref{DRDtwo}.

Let $\{\xi_i\}$ be an orthonomal basis of $\g$ with respect to the Killing form and let $\{\xi_i^\ast\}$ denote the dual basis of $\g^{\ast}$. Since for all $v \,\in\,V$,
$$ \pi_g(v)\,=\,\sum_i (\xi_i^{\ast} \otimes v) \otimes \xi_i\,\in\,\mathcal L_\g \otimes \g\,,$$
$$ \pi_\g(x)\,=\, \sum_{i_1,\cdots,i_n} (\xi_{i_1}^{\ast} \otimes v_1) \cdots (\xi_{i_n}^{\ast} \otimes v_n) \otimes [\xi_{i_1},\,[\xi_{i_2},\,\cdots[\xi_{i_{n-1}},\,\xi_{i_n}]\cdots]] \,\in\,\mathcal L_\g \otimes \g \ . $$
Hence,
$$\Tr_\g(\mathcal L)(x \cdot w)\,=\, \sum_{i_0,i_1,\cdots,i_n} \langle [\xi_{i_1},\,[\xi_{i_2},\,\cdots[\xi_{i_{n-1}},\,\xi_{i_n}]\cdots]] \,,\, \xi_{i_0}\rangle  (\xi_{i_1}^{\ast} \otimes v_1) \cdots (\xi_{i_n}^{\ast} \otimes v_n) (\xi_{i_0}^{\ast} \otimes w) \,,$$
where the pairing $\langle \mbox{--},\mbox{--} \rangle$ is the Killing form.  Identifying $\Omega^1({\mathcal L_\g})$ with $\mathcal L_\g \otimes \g^{\ast} \otimes V$, we have
\begin{eqnarray} \la{doftr} d\circ \Tr_\g(\mathcal L)(x \cdot w) &=& \end{eqnarray}
 $$ \sum_{i_0,i_1,\cdots,i_n} \big\{ \sum_{1 \leqslant j \leqslant n} \pm \langle [\xi_{i_1},\,[\xi_{i_2},\,\cdots[\xi_{i_{n-1}},\,\xi_{i_n}]\cdots]] \,,\, \xi_{i_0}\rangle (\xi_{i_1}^{\ast} \otimes v_1)\widehat{\underset{j}{\cdots}}  (\xi_{i_n}^{\ast} \otimes v_n) (\xi_{i_0}^{\ast} \otimes w) \otimes (\xi_{i_j}^{\ast} \otimes v_j)  $$ $$
+ \langle [\xi_{i_1},\,[\xi_{i_2},\,\cdots[\xi_{i_{n-1}},\,\xi_{i_n}]\cdots]] \,,\, \xi_{i_0}\rangle (\xi_{i_1}^{\ast} \otimes v_1) \cdots (\xi_{i_n}^{\ast} \otimes v_n) \otimes (\xi_{i_0}^{\ast} \otimes w)\big\} \ .
  $$
By \eqref{DRDtwo}, in $\mathcal L_\g \otimes \g \otimes V$,
\begin{eqnarray} \la{pigtimesid} (\pi_\g \otimes \id)(\bar{\partial}(x \cdot w))&=& \end{eqnarray}
$$ \sum_{i_0,i_1,\cdots,i_n} \big\{ \sum_{1 \leqslant j \leqslant n} \pm   (\xi_{i_1}^{\ast} \otimes v_1)\widehat{\underset{j}{\cdots}}  (\xi_{i_n}^{\ast} \otimes v_n) (\xi_{i_0}^{\ast} \otimes w) \otimes  [[\xi_{i_{j+1}},\,[\xi_{i_{j+2}},\,\cdots[\xi_{i_{n-1}},\,\xi_{i_n}]\cdots]],\,[[\cdots[\xi_{i_0},\,\xi_{i_1}]\cdots,\,\xi_{i_{j-2}}]\,,\xi_{i_{j-1}}]] \otimes v_j $$
$$+ (\xi_{i_1}^{\ast} \otimes v_1) \cdots (\xi_{i_n}^{\ast} \otimes v_n)  \otimes [\xi_{i_1},\,[\xi_{i_2},\,\cdots[\xi_{i_{n-1}},\,\xi_{i_n}]\cdots]] \otimes w \big\} \ .$$
Since $\g$ is identified with $\g^{\ast}$ via the Killing form, the desired lemma follows from \eqref{doftr} and \eqref{pigtimesid} once we verify that for $1 \leqslant j \leqslant n$,
$$ \sum_{i_0,i_1,\cdots,i_n} \langle [[\xi_{i_{j+1}},\,[\xi_{i_{j+2}},\,\cdots[\xi_{i_{n-1}},\,\xi_{i_n}]\cdots]],\,[[\cdots[\xi_{i_0},\,\xi_{i_1}]\cdots,\,\xi_{i_{j-2}}]\,,\xi_{i_{j-1}}]], \xi_{i_j} \rangle $$
$$\,=\,  \sum_{i_0,i_1,\cdots,i_n}  \langle [\xi_{i_1},\,[\xi_{i_2},\,\cdots[\xi_{i_{n-1}},\,\xi_{i_n}]\cdots]] \,,\, \xi_{i_0}\rangle\ .$$
This is immediate from the symmetry and invariance of the Killing form.
\eproof

\subsection{Traces to Hochschild homology}

Let $V:=\bar{C}[-1]$, $R:=\cb(C)$ and $\mathcal L:=\cb_{\mathtt{Comm}}(C)$ (note that $R\,\cong\,\mathcal U\mathcal L$). By assumption, $\mathcal{L} \stackrel{\sim}{\rar} \mfa$ is a cofibrant resolution of $\mfa$ in $\DGL_k$, and $R \stackrel{\sim}{\rar} \mathcal U\mfa$ is a cofibrant resolution of $\mathcal U\mfa$.

For a $R$-bimodule $M$, let $M_\n:=M/[R,M]$. Since $\Omega^1 R\,\cong\, R \otimes V \otimes R$ as graded $R$-bimodules, $\Omega^1R_\n\,\cong\, R \otimes V$ as graded vector spaces. We equip $R \otimes V \otimes R$ (resp., $R \otimes V$) with the differential inherited from $\Omega^1R$ (resp., $\Omega^1R_\n$). Let $\beta\,:\,\Omega^1 R_\n \rar \bar{R}$ denote the map given by $r \otimes v \mapsto [r, v]$. It s known that there is an isomorphism of homologies
$$ \overline{\HH}_\bullet(\mathcal U\mfa)\,\cong\,\H_\bullet[\cn(\beta\,:\,\Omega^1R_\n \rar \bar{R})]\ .$$
For $p \geqslant 1$, let $\theta^{(p)}(\mathcal L)$ denote the subcomplex $\Sym^p(\mathcal L) \otimes V$ of $R \otimes V$, where $\Sym^p(\mathcal L)$ is embedded in $R$ via the symmetrization map. It is easy to verify that $ \beta[\theta^{(p)}(\mathcal L)]\,\subset \,\Sym^p(\mathcal L)$ (see \cite[Lemma 2.1]{BRZ}). By (the proof of) \cite[Thm. 2.2]{BRZ},
$$ \HH^{(p)}_\bullet(\mfa)\,\cong\,\H_\bullet[\cn(\beta\,:\,\theta^{(p)}(\mathcal L) \rar \Sym^p(\mathcal L))]\ .$$
Let $\theta\,:\,V \rar \g \otimes \g^{\ast} \otimes V$ denote the map $v \mapsto \sum_i \xi_i \otimes \xi_i^{\ast} \otimes v$. Let $\g^{\ast}(V):=\g^{\ast} \otimes V$. Given $P\,\in\,I^{p+1}(\g)$, consider the composite map
$$ \begin{diagram}[small]\Sym^p(\mathcal L) \otimes V  & & & & & &\\
   \dTo_{\Sym^p(\pi_\g) \otimes \theta} &  & & & & &\\
    \Sym^p(\mathcal L_\g \otimes \g) \otimes \g \otimes \g^{\ast}(V) & \rTo & \mathcal L_\g \otimes \Sym^p(\g) \otimes \g \otimes \g^{\ast}(V) & \rTo & \mathcal L_\g \otimes \Sym^{p+1}(\g) \otimes \g^{\ast}(V) & \rTo^{\mathrm{ev}_P} & \mathcal L_\g \otimes \g^{\ast} (V) \end{diagram}\,,$$
where the unlabelled arrows stand for maps multiplying the obvious factors out. Identifying $\Omega^1(\mathcal L_\g)$ with $\mathcal L_\g \otimes \g^{\ast}(V)$, we see that the above composite map gives a map
\begin{equation} \la{edrintr} \Tr_\g(P,\mathcal L)\,:\,\theta^{(p)}(\mathcal L) \rar \Omega^1(\mathcal L_\g) \end{equation}
of complexes such that the following diagram commutes:
$$\begin{diagram}
\theta^{(p)}(\mathcal L) & \rTo^{\beta} & \Sym^p(\mathcal L) \\
   \dTo^{\Tr_\g(P,\mathcal L)}      & & \dTo^{\Tr_\g(P',\mathcal L)} \\
  \Omega^1(\mathcal L_\g) & \rTo^0 & \mathcal L_\g
   \end{diagram} \ .$$
Here, the vertical arrow on the right is the composition of the canonical projection $\Sym^p(\mathcal L) \twoheadrightarrow \lambda^{(p)}(\mathcal L)$ with the Drinfeld trace $\Tr_\g(P',\mathcal L)$ for any $P' \,\in\,I^p(\g)$. The Drinfeld tace therefore extends to give a composite map (independent of the choice of $P'$)
$$\Tr_\g(P,\mathcal L)\,:\,\begin{diagram} \cn(\beta\,:\,\theta^{p}(\mathcal L) \rar \Sym^p(\mathcal L))& \rTo &  \cn(\,0\,:\Omega^1(\mathcal L_\g) \rar \mathcal L_\g) & \rOnto & \Omega^1(\mathcal L_\g)[1] \end{diagram} \  $$
On homologies, we obtain a map
$$ \Tr_\g(P,\mfa)\,:\,\HH^{(p)}_{\bullet+1}(\mfa) \rar \H_\bullet[\Omega^1(\DRep_\g(\mfa))]\ .$$
The following theorem is our second main result:
\bthm \la{secondintertwining}
For any $P\,\in\,I^{p+1}(\g)$, there is a commuting diagram of graded Lie modules over $\HC^{(2)}_\bullet(\mfa)$
$$\begin{diagram}
\HC^{(p+1)}_{\bullet}(\mfa) & \rTo^B & \HH^{(p)}_{\bullet+1}(\mfa)\\
   \dTo^{\Tr_\g(P,\mfa)}   & & \dTo^{\Tr_\g(P,\mfa)}\\
 \HR_\bullet(\mfa,\g) & \rTo^d & \H_\bullet[\Omega^1(\DRep_\g(\mfa))]\\
\end{diagram}\,,$$
where the horizontal arrow in the bottom of the above diagram is induced by the universal derivation.
\ethm

\subsubsection{Proof of Theorem \ref{secondintertwining}}

Let $\Phi_\g\,:\,\Omega^1R \rar \Omega^1(\mathcal L_\g) \otimes \mathcal U\g$ denote the composite map
$$
\begin{diagram}[small]
\Omega^1(R)\,\cong\,R\otimes V\otimes R \\
\dTo_{\mathcal{U}\pi_{\g}\otimes \theta\otimes \mathcal{U}\pi_{\g}}\\
\mathcal{U}(\mathcal{L}_{\g}\otimes \g) \otimes \g\otimes \g^{\ast}(V) \otimes \mathcal{U}(\mathcal{L}_{\g}\otimes \g)\\
\dTo\\
(\mathcal{L}_{\g} \otimes \mathcal{U}\g) \otimes \g^{\ast}(V) \otimes \g \otimes (\mathcal{L}_{\g}\otimes \mathcal{U}\g)\\
\dTo\\
(\mathcal{L}_{\g}\otimes\mathcal{L}_{\g})\otimes \g^{\ast}(V)  \otimes (\mathcal{U}\g\otimes \g \otimes \mathcal{U}\g)\\
\dTo_{\mu_{\mathcal{L}_{\g}}\otimes \id \otimes \mu_{\mathcal{U}\g}}\\
\mathcal{L}_{\g}\otimes \g^{\ast}(V) \otimes \mathcal{U}\g\,\cong\,\Omega^1(\mathcal L_\g) \otimes \mathcal U\g \ .
\end{diagram}
$$
It is easy to check that $\Phi_{\g}$ is a $R$-bimodule map, where the $R$-bimodule structure on $\mathcal{L}_{\g}\otimes \g^{\ast}(V) \otimes \mathcal{U}\g$ is induced by the composite map (also denoted by $\mathcal{U}\pi_{\g}$)
$$
\begin{diagram}
R & \rTo^{\mathcal{U}\pi_{\g}} & \mathcal{U}(\mathcal{L}_{\g}\otimes \g) & \rTo & \mathcal{L}_{\g}\otimes \mathcal{U}\g\ .
\end{diagram}
$$
Recall from Section \ref{cyclic} that the $R$-bimodule $\Omega^1R$ acquires a double bracket. Composing this double bracket with the bimodule action map gives a map $R \otimes \Omega^1R \rar \Omega^1R$. By \cite[Prop. 3.10]{CEEY}, this map equips $\Omega^1R$ with the structure of a DG Lie module over $R_\n$ (and, by restriction, $\lambda^{(2)}(\mathcal L)$). On the other hand, by Lemma \ref{poissce}, $\mathcal L_\g$ acquires a DG Poisson structure such that the Drinfeld trace $\Tr_\g\,:\,\lambda^{(2)}(\mathcal L) \rar \mathcal L_\g$ corresponding to the Killing form is a homomorphism of DG Lie algebras (see \cite[Sec. 5]{BRZ}). There is a Poisson action of $\mathcal L_\g$ on $\Omega^1(\mathcal L_\g)$ making the latter a DG Lie module over the former. This equips  $\Omega^1(\mathcal L_\g)$ with the structure of a DG Lie module over $\lambda^{(2)}(\mathcal L)$. Equip $\Omega^1(\mathcal L_\g) \otimes \mathcal U\g$ with the $\lambda^{(2)}(\mathcal L)$-module structure coming from the above $\lambda^{(2)}(\mathcal L)$-action on $\Omega^1(\mathcal L_\g)$  and the trivial action on $\mathcal U\g$. The following proposition summarizes the main facts used for the proof of Theorem \ref{secondintertwining}.
\bprop \la{phig}
All maps in the commutative diagram below are $\lambda^{(2)}(\mathcal L)$-equivariant:
\begin{equation} \la{diffCD}
\begin{diagram}
R  & \rTo^d & \Omega^1R\\
 \dTo^{\mathcal U\pi_\g}  & & \dTo^{\Phi_\g}\\
 \mathcal L_\g \otimes \mathcal U\g & \rTo^{d \otimes \Id_{\mathcal U\g}} & \Omega^1(\mathcal L_\g)\otimes \mathcal U\g
\end{diagram}
\end{equation}
\eprop
The following lemma is required for the proof of Proposition \ref{phig}.
\blemma \la{raction}
The action of $R$ on $\Omega^1R\,\cong\,R \otimes V \otimes R$ is given by:
\begin{equation}\label{DBRVR1}
\{a,\,b\otimes v\otimes c\}=
\{ a,\,b\} \otimes v\otimes c
+(-1)^{(|a|+n)(|b|+|v|)}b\otimes v\otimes \{ a,\,c\}
+(-1)^{(|a|+n)|b|}b(d\{a,\,v\})c\ .
\end{equation}
\elemma
\bproof
the isomorphism $I\,:\, R\otimes V \otimes R \rightarrow \Omega_{R}^{1}\subset R\otimes R$ is given by
$$I\{(v_{1}\,\cdots\,v_{p-1})\otimes v_{p} \otimes(v_{p+1}\,\cdots\,v_{m})\}=(v_{1}\,\cdots\,v_{p})\otimes(v_{p+1}\,\cdots\,v_{m})-(v_{1}\,\cdots\,v_{p-1})\otimes(v_{p}\,\cdots\,v_{m})\ .$$
Under this isomorphism, the universal derivation $d\,:\,R \rar \Omega^1R$  is given by
$$d(v_{1}\,\cdots\,v_{m})=\sum_{i=1}^{m}(v_{1}\,\cdots\,v_{i-1})\otimes v_{i}\otimes (v_{i+1}\,\cdots\,v_{m})\,,$$
and for all $r \,\in\,R$,
$$ I(dr)\,=\,r \otimes 1 - 1 \otimes r \ .$$
For $a,\,b,\,c\in R$ and $v\in V$, consider
\begin{eqnarray*}
\ldb a,\,I(b\otimes v\otimes c)\rdb &=&\ldb a,\,bv\otimes c-b\otimes vc\rdb\\
&=&\ldb a,\,bv\rdb\otimes c+(-1)^{(|a|+n)(|b|+|v|)}bv\otimes \ldb a,\,c\rdb\\
&&-\ldb a,\,b\rdb\otimes vc-(-1)^{(|a|+n)|b|}b\otimes \ldb a,\,vc\rdb\\
&=& \ldb a,\,b\rdb v\otimes c+(-1)^{(|a|+n)|b|}b\ldb a,\,v\rdb\otimes c+(-1)^{(|a|+n)(|b|+|v|)}bv\otimes \ldb a,\,c\rdb\\
&&-\ldb a,\,b\rdb\otimes vc-(-1)^{(|a|+n)|b|}b\otimes \ldb a,\,v\rdb c-(-1)^{(|a|+n)(|b|+|v|)}b\otimes v\ldb a,\,c\rdb\\
&=& \ldb a,\,b\rdb v\otimes c-\ldb a,\,b\rdb\otimes vc\\
&& +(-1)^{(|a|+n)(|b|+|v|)}(bv\otimes \ldb a,\,c\rdb-b\otimes v\ldb a,\,c\rdb)\\
&&+(-1)^{(|a|+n)|b|}(b\ldb a,\,v\rdb\otimes c-b\otimes \{a,\,v\}\otimes c)\\
&&+(-1)^{(|a|+n)|b|}(b\otimes \{a,\,v\}\otimes c-b\otimes \ldb a,\,v\rdb c)\,.
\end{eqnarray*}

Note that both $\ldb a,\,b\rdb v\otimes c-\ldb a,\,b\rdb\otimes vc$ and $b\otimes \{a,\,v\}\otimes c-b\otimes \ldb a,\,v\rdb c$ belong to $R\otimes \Omega^1{R}$, and both $bv\otimes \ldb a,\,c\rdb-b\otimes v\ldb a,\,c\rdb$ and $b\ldb a,\,v\rdb\otimes c-b\otimes \{a,\,v\}\otimes c$ belong to $\Omega^1{R}\otimes R$. Hence,

\begin{eqnarray*}
\{a,\,I(b\otimes v\otimes c)\}\,=\, \mu \circ\ldb a,\,I(b\otimes v\otimes c)\rdb  &=& \{ a,\,b\} v\otimes c-\{ a,\,b\}\otimes vc\\
&& +(-1)^{(|a|+n)(|b|+|v|)}(bv\otimes \{ a,\,c\}-b\otimes v\{ a,\,c\})\\
&&+(-1)^{(|a|+n)|b|}(b\{a,\,v\}\otimes c-b\otimes \{a,\,v\}c)\\
& = & I(\{a,b\} \otimes v \otimes c)+(-1)^{(|a|+n)(|b|+|v|)}I(b \otimes v \otimes \{a,c\})\\
& & + (-1)^{(|a|+n)|b|}I(b d(\{a,v\}) c)\ .
\end{eqnarray*}

In the above computation, $\mu\,:\,R \otimes \Omega^1R \oplus \Omega^1R \otimes R \rar \Omega^1R$ denotes the bimodule action map. The above computation completes the proof of the desired lemma.
\eproof

For the convenience of the reader, we break Proposition \ref{phig} into the following two lemmas:
\blemma \la{dcommute}
The diagram \eqref{diffCD} commutes.
\elemma
\bproof
For $v_1,\cdots,v_n\,\in\,V$,
\begin{eqnarray*}
\Phi_{\g}\circ d(v_{1}\,\cdots\,v_{n}) &=& \Phi_{\g} \big(\sum_{i=1}^{n}(v_{1}\,\cdots\,v_{i-1})\otimes v_{i}\otimes (v_{i+1}\,\cdots\,v_{n})\big)\\
&=& \sum_{i=1}^{n} \sum_{j_{1},\,\cdots,\,j_{n}}\big(\pm(\xi_{j_1}^{\ast} \otimes v_1)\cdots \widehat{(\xi_{j_i}^{\ast} \otimes v_i)} \cdots(\xi_{j_{n}}^{\ast} \otimes v_n)\\
&&\otimes(\xi_{j_i}^{\ast} \otimes v_i)\otimes (\xi_{j_{1}}\cdots\xi_{j_{n}})\big)\ .
\end{eqnarray*}
On the other hand,
\begin{eqnarray*}
(d\otimes \id)\circ \mathcal{U}\pi_{\g}(v_{1}\,\cdots\,v_{n}) &=& (d\otimes \id)\big(\sum_{j_{1},\,\cdots,\,j_{n}}(\xi_{j_1}^{\ast} \otimes v_1)\cdots (\xi_{j_n}^{\ast} \otimes v_n)\otimes (\xi_{j_{1}}\cdots\xi_{j_{n}})\big)\\
&=& \sum_{j_{1},\,\cdots,\,j_{n}}\sum_{i=1}^{n}\big(\pm (\xi_{j_1}^{\ast} \otimes v_1)\cdots \widehat{(\xi_{j_i}^{\ast} \otimes v_i)} \cdots(\xi_{j_n}^{\ast} \otimes v_n)\\
&&\otimes(\xi_{j_i}^{\ast} \otimes v_i)\otimes (\xi_{j_{1}}\cdots\xi_{j_{n}})\big)\ .
\end{eqnarray*}
This proves the desired lemma.
\eproof
\blemma \la{equiv}
All maps in the diagram \eqref{diffCD} are $\lambda^{(2)}(\mathcal L)$-equivariant.
\elemma
\bproof
It follows immediately from Proposition \ref{univrep} that $\mathcal U\pi_\g$ is $\lambda^{(2)}(\mathcal L)$-equivariant. Since the $\lambda^{(2)}(\mathcal L)$-action on $\mathcal L_\g$ (resp., $\Omega^1(\mathcal L_g)$) factors through the Poisson action of $\mathcal L_\g$ on itself (resp., $\Omega^1(\mathcal L_g)$), the universal derivation $d\,:\,\mathcal L_\g \rar \Omega^1(\mathcal L_\g)$ is $\lambda^{(2)}(\mathcal L)$-equivariant. Indeed, the Poisson action of $\mathcal{L}_{\g}$ on $\Omega^{1}(\mathcal{L}_{\g})$ is explicitly given by
\begin{equation}
\{\eta,\,\beta d\gamma \}=\{\eta,\,\beta\}d \gamma+(-1)^{(|\eta|+n)|\beta|}\beta d\{\eta,\,\gamma\}
\end{equation}
for $\eta,\beta,\gamma \,\in\,\mathcal L_\g$, since the Poisson bracket on $\mathcal{L}_{\g}$ has degree $n+2$. The $R_{\n}$-equivariance (and therefore, $\lambda^{(2)}(\mathcal L)$-equivariance) of $d\,:\,R \rar \Omega^1R$ is a direct consequence of Lemma \ref{raction} and the fact that $R_\n$ acts by derivations on $R$. It therefore, remains to verify that $\Phi_\g$ is $\lambda^{(2)}(\mathcal L)$-equivariant. This is equivalent to the assertion that for $\alpha \,\in\,\lambda^{(2)}(\mathcal L)$, $p,q \,\in\,R$ and $u\,\in\,V$,
\begin{equation}\label{cdRVR4}
\{\Tr_{\g}(\mathcal{L})(\alpha),\,\Phi_{\g}(p\otimes u\otimes q)\}=\Phi_{\g}\big(\{\alpha,\,p\otimes u\otimes q\}\big)\ .
\end{equation}
By Lemma \ref{raction},
\begin{eqnarray}\label{DBRVR2}
&&\{\alpha,\,p\otimes u\otimes q\}\nonumber\\
&&=\{\alpha,\,p\} \otimes u\otimes q
+(-1)^{(|\alpha|+n)(|p|+|u|)}p\otimes u\otimes \{ \alpha,\,q\}
+(-1)^{(|\alpha|+n)|p|}p(d\{\alpha,\,u\})q
\end{eqnarray}
where $d\,:\,R \rightarrow R\otimes V \otimes R$ is the noncommutative de Rham differential. Since $\Phi_{\g}$ is an $R$-bimodule map, $\Phi_{\g}(p\otimes u\otimes q)=\mathcal{U}\pi_{\g}(p)\Phi_{\g}(1\otimes u \otimes 1)\mathcal{U}\pi_{\g}(q)$. Therefore,
\begin{eqnarray}\label{cdRVR3}
\Phi_{\g}\big(\{\alpha,\,p\otimes u\otimes q\}\big)&=&\mathcal{U}\pi_{\g}\big(\{\alpha,\,p\}\big)\Phi_{\g}(1\otimes u \otimes 1)\mathcal{U}\pi_{\g}(q)\nonumber\\
&&+(-1)^{(|\alpha|+n)|p|}\mathcal{U}\pi_{\g}(p)\Phi_{\g}\big(d\{\alpha,\,u\}\big)\mathcal{U}\pi_{\g}(q)\nonumber\\
&&+(-1)^{(|\alpha|+n)(|p|+|u|)}\mathcal{U}\pi_{\g}(p)\Phi_{\g}(1\otimes u \otimes 1)\mathcal{U}\pi_{\g}\big(\{\alpha,\,q\}\big)\ .
\end{eqnarray}
On the other hand,
\begin{eqnarray}\label{cdRVR2}
\{\Tr_{\g}(\mathcal{L})(\alpha),\,\Phi_{\g}(p\otimes u\otimes q)\} &=& \{\Tr_{\g}(\mathcal{L})(\alpha),\,\mathcal{U}\pi_{\g}(p)\Phi_{\g}(1\otimes u \otimes 1)\mathcal{U}\pi_{\g}(q)\}\nonumber\\
&=&\{\Tr_{\g}(\mathcal{L})(\alpha),\,\mathcal{U}\pi_{\g}(p)\}\Phi_{\g}(1\otimes u \otimes 1)\mathcal{U}\pi_{\g}(q)\nonumber\\
&&+(-1)^{(|\alpha|+n)|p|}\mathcal{U}\pi_{\g}(p)\{\Tr_{\g}(\mathcal{L})(\alpha),\,\Phi_{\g}(1\otimes u \otimes 1)\}\mathcal{U}\pi_{\g}(q)\nonumber\\
&&+(-1)^{(|\alpha|+n)(|p|+|u|)}\mathcal{U}\pi_{\g}(p)\Phi_{\g}(1\otimes u \otimes 1)\{\Tr_{\g}(\mathcal{L})(\alpha),\,\mathcal{U}\pi_{\g}(q)\}\ .
\end{eqnarray}
\noindent

\noindent
Since $\mathcal U\pi_\g$ is $\lambda^{(2)}(\mathcal L)$-equivariant,
$$\{\Tr_{\g}(\mathcal{L})(\alpha),\,\mathcal{U}\pi_{\g}(p)\}=\mathcal{U}\pi_{\g}\big(\{\alpha,\,p\}\big)$$
and
$$\{\Tr_{\g}(\mathcal{L})(\alpha),\,\mathcal{U}\pi_{\g}(q)\}=\mathcal{U}\pi_{\g}\big(\{\alpha,\,q\}\big)\,.$$

\noindent
By \eqref{cdRVR3} and \eqref{cdRVR2}, \eqref{cdRVR4} follows once we verify that for all $\alpha\,\in\,\lambda^{(2)}(\mathcal L)$ and $u\,\in\,V$,
\begin{equation}\label{cdRVR5}
 \{\Tr_{\g}(\mathcal{L})(\alpha),\,\Phi_{\g}(1\otimes u\otimes 1)\}=\Phi_{\g}\big(\{\alpha,\,1\otimes u\otimes 1\}\big)\ .
\end{equation}

\noindent
By Lemma \ref{dcommute}, $\Phi_\g(1 \otimes u \otimes 1)\,=\,(d \otimes \id_{\g})[\pi_\g(u)]$, where $u$ is viewed on the right hand side as an element of $V \subset \mathcal L$. Therefore,
\begin{eqnarray*}
 \{\Tr_{\g}(\mathcal{L})(\alpha),\,\Phi_{\g}(1\otimes u\otimes 1)\} &=&  \{\Tr_{\g}(\mathcal{L})(\alpha),\,(d \otimes \id_{\g})[\pi_\g(u)]\}\\
                                                                    &=& (d \otimes \id_{\g}) \{\Tr_{\g}(\mathcal{L})(\alpha),\pi_\g(u)\} \\
                                                                    &=& (d \otimes \id_{\g})[\pi_\g(\{\alpha, u\})] \qquad (\text{by Prop. \ref{univrep}}) \\
                                                                    &=& \Phi_\g[d\{\alpha,u\}] \qquad (\text{by Lemma \ref{dcommute}})\\
                                                                    &=& \Phi_{\g}\big(\{\alpha,\,1\otimes u\otimes 1\}) \qquad (\text{by Lemma \ref{raction}})\ .
                                                                    \end{eqnarray*}

The second equality above is because $d\,:\,\mathcal L_\g\rar \Omega^1\mathcal L_\g$ is equivariant with respect to the Poisson action of $\mathcal L_\g$. This verifies \eqref{cdRVR5}, completing the proof of the desired lemma.
\eproof

Let $\varphi_\g\,:\,\Omega^1R_\n \rar \Omega^1(\mathcal L_\g) \otimes \mathcal U\g$ denote the composite map
$$
\begin{diagram}[small]
\Omega^1R_\n\,\cong\, R\otimes V & & & &\\
\dTo^{\mathcal{U}\pi_{\g}\otimes \theta} & & & &\\
\mathcal{U}(\mathcal{L}_{\g}\otimes \g)\otimes \g^{\ast}(V) \otimes \g &\rTo & (\mathcal{L}_{\g}\otimes \mathcal{U}\g)\otimes \g^{\ast}(V) \otimes \g & \rTo &
\mathcal{L}_{\g}\otimes  \g^{\ast}(V) \otimes (\mathcal{U}\g\otimes \g) & \rTo^{\id \otimes \mu_{\mathcal{U}\g}}&
\mathcal{L}_{\g}\otimes\g^{\ast}(V) \otimes \mathcal{U}\g
\end{diagram}\ .
$$
It is easy to verify that
\begin{equation} \la{difference}(\Phi_\g - \varphi_\g \circ \n)(\Omega^1R) \,\subset \, \Omega^1(\mathcal L_\g) \otimes [\mathcal U\g,\mathcal U\g]\,=\,\Omega^1(\mathcal L_\g) \otimes [\g,\mathcal U\g]\,, \end{equation}
where $\n\,:\,\Omega^1R \rar \Omega^1R_\n$ denotes the canonical projection. Any $P\,\in\,I^{p+1}(\g)$ determines a linear functional $\mathrm{ev}_P$ on $\Sym(\g)$, which coincides with the usual evaluation map on $\Sym^{p+1}(\g)$ and vanishes on other symmetric powers of $\g$. Composing this linear functional with the inverse of the symmetrization map $\mathcal U\g \rar \Sym(\g)$ determines a linear functional on $\mathcal U\g$, which we still denote by $\mathrm{ev}_P$. Since $P$ is $\ad$-invariant, $\mathrm{ev}_P([\g,\mathcal U\g])=0$. It follows from \eqref{difference} that
\begin{equation} \la{twomaps} (\id \otimes \mathrm{ev}_P) \circ \Phi_\g \,=\,  (\id \otimes \mathrm{ev}_P) \circ \varphi_\g \circ \n\ . \end{equation}
\blemma \la{ldescent}
There is a commutative diagram of $\lambda^{(2)}(\mathcal L)$-equivariant maps
\begin{equation} \la{descent}
\begin{diagram}
R & \rTo^{\bar{\partial}} & \Omega^1R_\n\\
\dTo^{(\id \otimes \mathrm{ev}_P) \circ \mathcal U\pi_\g} & & \dTo_{ (\id \otimes \mathrm{ev}_P) \circ \varphi_\g}\\
\mathcal L_\g & \rTo^d & \Omega^1(\mathcal L_\g)
  \end{diagram}
\end{equation}
\elemma

\bproof

The commutativity of \eqref{descent} is immediate from Proposition \ref{phig}, \eqref{twomaps} and the fact that $\n \circ d\,=\,\bar{\partial}$. Since $\Phi_\g$ is $\lambda^{(2)}(\mathcal L)$-equivariant so is $(\id \otimes \mathrm{ev}_P) \circ \Phi_\g$. Since $\n\,:\,\Omega^1R \rar \Omega^1R_\n$ is surjective and $R_\n$-equivariant (and hence, $\lambda^{(2)}(\mathcal L)$-equivariant as well), the map $(\id \otimes \mathrm{ev}_P) \circ \varphi_\g\,:\,\Omega^1R_\n \rar \Omega^1(\mathcal L_\g)$ is $\lambda^{(2)}(\mathcal L)$-equivariant as well. The  $R_\n$-equivariance of $\n$ and the $R_\n$ equivariance of $d$ together imply $R_\n$-equivariance (and hence, $\lambda^{(2)}(\mathcal L)$-equivariance) of $\bar{\partial}$. Since $\mathcal U\pi_\g$ is  $\lambda^{(2)}(\mathcal L)$-equivariant by Proposition \ref{phig}, so is $(\id \otimes \mathrm{ev}_P) \circ \mathcal U\pi_\g$. That $d$ is $\lambda^{(2)}(\mathcal L)$-equivariant is part of the proof of Proposition \ref{phig}.
\eproof

\bcor \la{ptheorem}
There is a commutative diagram of $\lambda^{(2)}(\mathcal L)$-equivariant maps
\begin{equation} \la{descent2}
\begin{diagram}
\lambda^{(p+1)}(\mathcal L) & \rTo^{\bar{\partial}} & \theta^{(p)}(\mathcal L)\\
\dTo^{\Tr_\g(P,\mathcal L)} & & \dTo_{\Tr_\g(P,\mathcal L)}\\
\mathcal L_\g & \rTo^d & \Omega^1(\mathcal L_\g)
  \end{diagram}
\end{equation}
\ecor
\bproof
It is easy to verify that the restriction of $(\id \otimes \mathrm{ev}_P) \circ \varphi_\g$ to the $\lambda^{(2)}(\mathcal L)$-submodule $\theta^{(p)}(\mathcal L)$ of $\Omega^1R_\n$ coincides with $\Tr_\g(P,\mathcal L)\,:\,\theta^{(p)}(\mathcal L)\rar \Omega^1(\mathcal L_\g)$. Similarly the map $\bar{\partial}$ factors through $R_\n$: by \cite[Lemma 6.2]{BRZ}, $\bar{\partial}(\lambda^{(p+1)}(\mathcal L))\,\subset\,\theta^{(p)}(\mathcal L)$. Since $P$ is $\ad$-invariant, the map $(\id \otimes \mathrm{ev}_P) \circ \mathcal U\pi_\g$ factors through $R_\n$: the restriction of the corresponding map $R_\n \rar \mathcal L_\g$ to $\lambda^{(p+1)}(\mathcal L)$ is easily seen to coincide with $\Tr_\g(P,\mathcal L)\,:\, \lambda^{(p+1)}(\mathcal L) \rar \mathcal L_\g$. The desired statement is immediate from the above observations and Lemma \ref{ldescent}.
\eproof

Since $\beta \circ \bar{\partial}\,=\, 0$, $\bar{\partial}$ defines a map of complexes
$$\partial\,:\, \lambda^{(p+1)}(\mathcal L) \rar \cn(\beta\,:\,\theta^{(p)}(\mathcal L) \rar \Sym^p(\mathcal L))[-1] \ .$$
It follows from Corollary \ref{ptheorem}, as well as the $\lambda^{(2)}(\mathcal L)$-equivariance of $\beta$ that there is a commutative diagram of $\lambda^{(2)}(\mathcal L)$-equivariant maps
$$
\begin{diagram}
\lambda^{(p+1)}(\mathcal L) & \rTo^{{\partial}} &  \cn(\beta\,:\,\theta^{(p)}(\mathcal L) \rar \Sym^p(\mathcal L))[-1]\\
\dTo^{\Tr_\g(P,\mathcal L)} & & \dTo_{\Tr_\g(P,\mathcal L)}\\
\mathcal L_\g & \rTo^d & \Omega^1(\mathcal L_\g)
  \end{diagram} \ .$$
On homologies, this yields a commutative diagram of $\HC^{(2)}_{\bullet}(\mfa)$-equivariant maps
$$
\begin{diagram}
\HC^{(p+1)}_{\bullet}(\mfa) & \rTo^B & \HH^{(p)}_{\bullet+1}(\mfa)\\
   \dTo^{\Tr_\g(P,\mfa)}   & & \dTo^{\Tr_\g(P,\mfa)}\\
 \HR_\bullet(\mfa,\g) & \rTo^d & \H_\bullet[\Omega^1(\DRep_\g(\mfa))]\\
\end{diagram}\ .
$$
This completes the proof of Theorem \ref{secondintertwining}.

\subsection{The associative case}

For this section, let $C\,\in\,\DGC_{k/k}$ be a $d$-cyclic coassociative (coaugmented, conilpotent) DG coalgebra Koszul dual to $A\,\in\,\Alg_{k/k}$. In particular, we do not assume that $C$ is cocommutative. Throughout this section, let  $R\,:=\,\cb(C)$ and let $V:=\bar{C}[-1]$. Thus, as graded algebras, $R\,\cong\,T_kV$.

In this setting, $A$ acquires a derived Poisson structure, whence $\overline{\HC}_{\bullet}(A)$ has a graded Lie bracket of degree. Further, $\overline{\HH}_\bullet(A)$ becomes a graded Lie module over  $\overline{\HC}_{\bullet}(A)$ (see Theorem \ref{liestronhom}. By \cite[Theorem 9]{BCER}, there is a unique graded Poisson structure on $\HR_{\bullet}(A,n)^{\GL}$ such that $\Tr_n\,:\,\rHC_{\bullet}(A) \rar \HR_\bullet(A,n)^{\GL}$ is a graded Lie algebra homomorphism. As in Convention \ref{convention}, all Lie brackets and Lie actions in this section are of homological degree $d+2$.

Recall that one has the Van den Bergh functor
$$(\mbox{--})_n\,:\,\dgb(R) \rar \DGMod(R_n) \,,\qquad P \mapsto P \otimes_{R^e} \M_n(R_n)\ . $$
There is an adjunction
\begin{equation}\la{bimodadj} (\mbox{--})_n\,:\,\dgb(R) \rightleftarrows \DGMod(R_n)\,:\,\M_n(\mbox{--}) \ .\end{equation}
We continue to denote the unit of this adjunction by $\pi_n$. Explicitly, for any DG $R$-bimodule $P$, the map $\pi_n\,:\,P \rar \M_n(P_n)$ is give by the formula
$$\pi_n(p)\,=\,\sum_{1 \leqslant i,j \leqslant n} (p \otimes_{R^e} e_{ji}) \otimes_k e_{ij}\,,$$
where the $e_{ij}$'s are the elementary matrices and each $e_{ji}$ above is viewed as an element of $\M_n(R_n)$.  The composition $\Tr\circ \pi_n\,:\, P \rar P_n$ induces a map of complexes
$$ \Tr_n\,:\,P_\n\,:=\,P/[R,P] \rar P_n\ .$$
By \cite[Example 5.1]{BKR}, $(\Omega^1R)_n\,\cong\,\Omega^1R_n$. On homologies, the map $\Tr_n\,:\,\Omega^1R_{\n} \rar \Omega^1R_n$ gives
$$\Tr_n\,:\,\overline{\HH}_{\bullet+1}(A) \rar \HR_\bullet(\Omega^1A, n)\ , $$
where $\HR_\bullet(\Omega^1A,n)$ denotes the representation homology of the $A$-bimodule $\Omega^1A$ (see \cite[Thm. 5.1(b)]{BKR} for the definition). By \cite[Thm. 5.2]{BKR}, there is a commutative diagram
\begin{equation} \la{connesd}
\begin{diagram}
\rHC_{\bullet}(A) & \rTo^B & \overline{\HH}_{\bullet+1}(A)\\
 \dTo^{\Tr_n}   & & \dTo^{\Tr_n}\\
 \HR_\bullet(A,n) & \rTo^{B_n} & \HR_{\bullet}(\Omega^1A,n)\\
 \end{diagram}\,,
\end{equation}
where $B$ denotes the Connes differential and $B_n$ denotes the map induced on homologies by the universal derivation $R_n \rar \Omega^1R_n$. The following theorem, which is analogous to \cite[Theorem 5.1]{BRZ}, is a homological generalization of a special case of \cite[Prop. 7.5.1, Prop. 7.5.2 and Prop. 7.7.2]{VdB}.
\bthm \la{intertwiningassoc}
There is a DG Poisson structure on $\HR_{\bullet}(A,n)$ such that $\Tr_n\,:\,\rHC_{\bullet}(A) \rar \HR_\bullet(A,n)$ is a graded Lie algebra homomorphism.\\
\ethm

\noindent
\textbf{Remark.} Theorem \ref{intertwiningassoc} implies that the (unique) DG Poisson structure on $\HR_\bullet(A,n)^{\GL}$ in \cite[Theorem 9]{BCER} extends to all of $\HR_\bullet(A,n)$.
\bproof
Let $\bar{\M}_n^{\ast}(C)$ denote the coalgebra $k \oplus \M_n^{\ast}(k) \otimes \bar{C}$, so that $\overline{\bar{\M}_n^{\ast}(C)}\,=\,\M_n^{\ast}(k) \otimes \bar{C}$. Let $\gl_n^{\ast}(C)$ (resp., $\gl_n^{\ast}(\bar{C})$) denote $\bar{\M}_n^{\ast}(C)$ (resp., $\M_n^{\ast}(k) \otimes \bar{C}$) viewed as a Lie coalgebra. Note that $\M_n(k)$ has a nondegenerate cyclic pairing given by
$$ \langle M, N\rangle\,:=\,\Tr(MN)\ .$$
Identifying $\M_n(k)$ with $\M_n^{\ast}(k)$ via the above pairing, we obtain a cyclic pairing on $\M_n^{\ast}(k)$. Tensoring this pairing with the pairing on $\bar{C}$, we obtain a $n$-cyclic pairing on $\overline{\bar{\M}_n^{\ast}(C)}$, which can be extended to a $n$-cyclic pairing on $\bar{\M}_n^{\ast}(C)$. It is not difficult to verify that the cyclic pairing on $\overline{\bar{\M}_n^{\ast}(C)}$ is cyclic as a pairing on $\gl_n^{\ast}(\bar{C})$. By \cite[Lemma 5.1]{BRZ}, the Chevalley-Eilenberg algebra $\C^c(\gl_n^{\ast}(\bar{C});k)$ acquires a DG Poisson structure.
Since $R_n\,\cong\, \C^c(\gl_n^{\ast}(\bar{C});k)$ by \cite[Thm. 3.1]{BFPRW}, $R_n$ has a DG Poisson structure as well. That $\Tr_n\,:\,R_\n \rar R_n$ is a DG Lie algebra homomorphism is an immediate consequence of Proposition \ref{urepeq} below. The desired result then follows on homology.
\eproof

\bprop \la{urepeq}
The universal representation $\pi_n\,:\,R \rar \M_n(R_n)$ is $R_\n$-equivariant.
\eprop
\bproof
Recall that the $R_\n$ action on $\M_n(R_n)$ is obtained by composing the Poisson action of $R_n$ on $\M_n(R_n)$ with the map $\Tr_n\,:\,R_\n \rar R_n$. Since $R_n$ acts on $\M_n(R_n)$ by derivations, so does $R_\n$. Since $\pi_n$ is a homomorphism of DG algebras and since $R_\n$  acts on $R$  by derivations, it suffices to verify that for any $u \,\in V$ and for $\alpha:=(v_1,\ldots,v_k) \,\in\,R$ for $v_1,\ldots,v_k\,\in\,V$,
\begin{equation} \la{resaction} \pi_n(\{\alpha, u\})\,=\,\{\Tr_n(\overline{\alpha}), \pi_n(u)\}\ .\end{equation}
Here, $\overline{\alpha}$ denotes the image of $\alpha$ in $R_\n$ under the canonical projection. Indeed, the restriction of the $R_\n$-action on $R$ to the subspace $V$ is given by the composite map
$$ \begin{diagram} R_\n \otimes V & \rTo^{\partial \otimes \id} & R \otimes V \otimes V & \rTo^{\id \otimes \langle \mbox{--},\mbox{--} \rangle} & R \end{diagram} \ .$$
Similarly, the restriction of the Poisson action of $R_n$ on the subspace $\M^{\ast}_n(V)$ of $R_n$ is given by the composite map
$$ \begin{diagram} R_n \otimes \M^{\ast}_n(V) & \rTo^{d \otimes \id} & \Omega^1R_n \otimes \M^{\ast}_n(V)\,\cong\,R_n \otimes \M_n^{\ast}(V) \otimes \M_n^{\ast}(V) & \rTo^{\id \otimes  \langle \mbox{--},\mbox{--} \rangle} & R_n \end{diagram} \ .$$
By \cite[Lemma 5.2]{BKR}, the following diagram commutes.
$$ \begin{diagram}
R_\n & \rTo^{\partial}& \Omega^1R_\n \\
 \dTo^{\Tr_n} & & \dTo^{\Tr_n}\\
R_n & \rTo^{d} & \Omega^1R_n
\end{diagram}\ .$$
It follows that the following diagram commutes.
$$
\begin{diagram}
R_\n \otimes V & \rTo^{\partial \otimes \id}& \Omega^1R_\n \otimes V\\
   \dTo^{\Tr_n \otimes \theta_n} & & \dTo^{\Tr_n \otimes \theta_n}\\
   R_n \otimes \M_n^{\ast}(k) \otimes V \otimes \M_n(k) & \rTo^{d \otimes \id} & \Omega^1R_n \otimes \M_n^{\ast}(k) \otimes V \otimes \M_n(k)\\
   \end{diagram}\,,
$$
where $\theta\,:\, V \rar \M_n^{\ast}(k) \otimes V \otimes \M_n(k)$ denotes the restriction of $\pi_n$ to $V$. Explicitly, for $v \,\in\,V$,
$$\theta_n(v)\,=\,\sum_{1 \leqslant i,j \leqslant n} e^{\ast}_{ij} \otimes v \otimes e_{ij} \,,$$
where $\{e^{\ast}_{ij}\}$ denotes the basis of $\M^{\ast}_n(k)$ dual to the basis comprising the elementary matrices. \eqref{resaction} therefore follows once we check that the following diagram commutes:
\begin{equation} \la{trcheck}
\begin{diagram}
\Omega^1R_\n \otimes V\,\cong\, R \otimes V \otimes V & \rTo^{\id \otimes \langle \mbox{--},\mbox{--} \rangle}& R\\
 \dTo^{\Tr_n \otimes \theta_n} & & \dTo^{\pi_n}\\
\Omega^1R_n \otimes \M^{\ast}_n(k) \otimes V \otimes \M_n(k)\,\cong\,R_n \otimes (\M_n^{\ast}(k) \otimes V)^{\otimes 2} \otimes \M_n(k) & \rTo^{\id \otimes \langle \mbox{--},\mbox{--} \rangle \otimes \id} & R_n \otimes \M_n(k)\\
\end{diagram}\ .
\end{equation}
For $r \,\in\,R$ let $r_{ij}\,\in\,R_n$ denote the coefficient of the elementary matrix $e_{ij}$ in $\pi_n(r)$. Then, for $v\,\in\,V$,
$$\Tr_n(r \otimes v)\,=\,\sum_{1 \leqslant i,j \leqslant n} r_{ij} \otimes v_{ji} \,$$
where $\Omega^1R$ (resp., $\Omega^1R_n$) is identified with $R \otimes V$ (resp., $R_n \otimes \M_n^{\ast}(k) \otimes V$). Hence, for all $w \in V$,
$$ \Tr_n \otimes \theta_n(r \otimes v \otimes w)\,=\,\sum_{1 \leqslant i,j,k,l \leqslant n} r_{ij} \otimes v_{ji} \otimes w_{kl} \otimes e_{kl}\ .$$
Note that $\langle v_{ji}, w_{kl} \rangle\,=\,\langle e^{\ast}_{ji}, e^{\ast}_{kl} \rangle \langle v,w \rangle\,=\,\delta_{ik}\delta_{jl} \langle v, w \rangle$. Hence,
$$\sum_{1 \leqslant i,j \leqslant n} \langle v, w\rangle r_{ij} \otimes e_{ij}\,=\, \sum_{1 \leqslant i,j,k,l \leqslant n} \langle v_{ji} ,w_{kl} \rangle r_{ij} \otimes e_{kl}\ .  $$
This proves that the diagram \eqref{trcheck} commutes as desired.
\eproof

The following theorem was stated without proof in \cite{CEEY} (see {\it loc. cit.}, Theorem 1.3).
\bthm \la{assocCD}
There is a commutative diagram of $\rHC_{\bullet}(A)$-module homomorphisms
$$
\begin{diagram}
\rHC_{\bullet}(A) & \rTo^B & \overline{\HH}_{\bullet+1}(A)\\
 \dTo^{\Tr_n}   & & \dTo^{\Tr_n}\\
 \HR_\bullet(A,n) & \rTo^{B_n} & \HR_{\bullet}(\Omega^1A,n)\\
 \end{diagram}\ .$$
\ethm
\bproof
Since $\Tr_n \circ \n\,=\,\Tr \circ \pi_n$, where $\n\,:\,\Omega^1R \rar \Omega^1R_\n$ is the canonical projection (which is $R_\n$-equivariant), the $R_\n$-equivariance of $\Tr_n$ would follow once it is verified that $\pi_n\,:\,\Omega^1R \rar \M_n(\Omega^1R_n)$ is $R_\n$-equivariant.  By Lemma \ref{raction}, for all $\alpha\,\in\,R_\n, a, b \,\in\, R$ and for all $m \in \Omega^1R$,
  $$\{\alpha, a \cdot m \cdot b\}\,=\,\{\alpha, a\} \cdot m \cdot b \pm a \cdot \{\alpha, m\} \cdot b \pm a \cdot m \cdot \{\alpha, b\}\,,$$
 where the signs come from the Koszul sign rule. Similarly, for $x\,\in\,R_n$, $u,v\,\in\,\M_n(R_n)$ and $w\,\in\,\M_n(\Omega^1R_n)$,
 $$\{x, u \cdot w \cdot v\}\,=\, \{x, u\} \cdot w \cdot v \pm u \cdot \{x,w\} \cdot v \pm u \cdot w \cdot \{x,v\}\ . $$
 Since the $R_\n$ action on $R_n$ (resp., $\Omega^1R_n$) factors through the Poisson action of $R_n$ on $R_n$ (resp., $\Omega^1R_n$), since $\pi_n:\Omega^1R \rar \M_n(\Omega^1R_n)$ is an $R$-bimodule homomorphism, and by Proposition \ref{urepeq}, the $R_\n$-equivariance of $\pi_n:\Omega^1R \rar \M_n(R_n)$ would follow once we verify that for all $\alpha\,\in\,R_\n$ and for all $u\,\in\,V$,
 \begin{equation} \la{assoccheck} \pi_n(\{\alpha, 1 \otimes u \otimes 1\})\,=\,\{\Tr_n(\alpha), \pi_n(1 \otimes u \otimes 1)\}\ .\end{equation}
 A trivial modification of the proof of Lemma \ref{dcommute} (see also \cite[Sec. 5]{BKR}) shows that the following diagram commutes.
 \begin{equation} \la{univder}\begin{diagram}
    R & \rTo^d & \Omega^1R \\
    \dTo^{\pi_n} & & \dTo^{\pi_n}\\
    \M_n(R_n) & \rTo^{d \otimes \id_{\M_n(k)}} & \M_n(\Omega^1R_n)\\
    \end{diagram}
\end{equation}

It follows that $\pi_n(1 \otimes u \otimes 1)=(d \otimes \id_{\M_n(k)})[\pi_n(u)]$, where $u$ is viewed on the right hand side as an element of $V \subset R$. Therefore,
 \begin{eqnarray*}
 \{\Tr_n(\alpha), \pi_n(1 \otimes u \otimes 1)\} &=& \{\Tr_n(\alpha), (d \otimes \id_{\M_n(k)})[\pi_n(u)]\}\\
                                                 &=& (d \otimes \id_{\M_n(k)})\{\Tr_n(\alpha), \pi_n(u)\}\\
                                                 &=& (d \otimes \id_{\M_n(k)})[\pi_n(\{\alpha,u\})] \qquad (\text{ by Prop. \ref{urepeq}}) \\
                                                 &=& \pi_n[d(\{\alpha,u\})] \qquad (\text{ by \eqref{univder}})\\
                                                 &=& \pi_n(\{\alpha,1 \otimes u \otimes 1\}) \qquad (\text{ by Lemma \ref{raction}})\ .
 \end{eqnarray*}
The second equality above is because $d\,:\,R_n \rar \Omega^1R_n$ is equivariant with respect to the Poisson action of $R_n$. This verifies \eqref{assoccheck}, completing the proof of the desired theorem.
\eproof

\section{An operadic generalization}

Throughout this section, let $\mathcal{P}$ denote a finitely generated binary quadratic operad
and let $\mathcal{Q} = {\mathcal P}^{!} $ denote its (quadratic) Koszul dual. Let $\mathtt{DGPA}$ denote the category of DG $\mathcal{P}$-algebras and let $\mathtt{DGQC}$ denote the category of conilpotent DG $\mathcal{Q}$-coalgebras.

\subsection{Invariant bilinear forms}
\subsubsection{Definitions}
A symmetric bilinear form $B$ on $A\in \mathtt{DGPA}$ is called {\it invariant} if for all $\mu\in \mathcal{P}(2)$,
$$B(\mu(a,\,b),\,c)=(-1)^{|a||\mu|}B(a,\,\mu(b,\,c))\ \ \ \text{ for all }a,\,b,\,c\in A\,.$$

Recall that if $\mathcal P$ is a {\it cyclic} operad, there is an $\mathbb{S}_{n+1}$ action on $\mathcal{P}(n)$ for each $n \geq 0$. There is also a notion of invariant bilinear form on algebra over a cyclic operad:\\

\begin{definition}\cite[Definition 4.1]{GK}
Let $\mathcal{P}$ be a cyclic DG operad and let $A$ be a $\mathcal{P}$-algebra. A bilinear form $B$ (with values in a $k$-vector space $V$) is invariant if for all $n\geqslant 0$, the map $B_{n}\,:\,\mathcal{P}(n)\otimes A^{\otimes (n+1)} \rar V$ defined by the formula
$$B_{n}(\mu\otimes x_{1}\otimes \cdots \otimes x_{n}\otimes x_{n+1})=B(\mu(x_{1},\,\cdots,\,x_{n}),\,x_{n+1})$$
is invariant under the action of $\mathbb{S}_{n+1}$ on $\mathcal{P}(n)\otimes A^{\otimes (n+1)}$.
\end{definition}

If $\mathcal{P}$ is a cyclic binary quadratic operad, the two notions of invariant bilinear forms agree with each other (see \cite[Proposition 4.3]{GK}).

\subsubsection{Universal invariant bilinear form} Let $A_{\n}$ denote the target of the universal invariant bilinear form on $A$: this is equal to the quotient of $A\otimes A$ by the subcomplex spanned by the images of the maps
$$
\begin{diagram}
A\otimes A\otimes A & \rTo^{\mu\otimes \id-\id \otimes \mu} & A\otimes A\,,
\end{diagram}
$$
where $\mu$ runs over all (homogeneous) elements of $\mathcal{P}(2)$.

If $\mathcal{P}$ is cyclic binary quadratic, then $A_{\n}$ is equal to the quotient of $A\otimes A$ by the subcomplex spanned by the images of the maps (see \cite[Sec. 4.7]{GK}):
$$
\begin{diagram}
\mathcal{P}(n)\otimes A^{\otimes (n+1)} & \rTo^{1-\sigma} & \mathcal{P}(n)\otimes A^{\otimes (n+1)} & \rTo A\otimes A
\end{diagram}\ \ \ \ \text{ for all }n\geqslant 1 \text{ and } \sigma \in \mathbb{S}_{n+1}\,,
$$
where the last arrow is given by
$$
\mu\otimes x_{1}\otimes \cdots \otimes x_{n}\otimes x_{n+1} \mapsto \mu(x_{1},\,\cdots,\,x_{n})\otimes x_{n+1}\,,
$$
and $\sigma$ acts diagonally. This quotient is denoted in \cite{GK} by $\lambda(A)$. In particular, taking $\mu=\id\in \mathcal{P}(1)$, we see that the canonical projection from $A\otimes A$ to $A_\n$ is symmetric. Moreover, if $R=T_{\mathcal{P}}V$ the free $\mathcal{P}$-algebra generated by $V$,

\begin{proposition}\label{OpNatural}\cite[Proposition 4.9]{GK}
There is a natural isomorphism of chain complexes
$$
R_{\n}\cong \bigoplus_{n=0}^{\infty}\mathcal{P}(n)\otimes_{\mathbb{S}_{n+1}}V^{(n+1)}\,.
$$
\end{proposition}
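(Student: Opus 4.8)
The plan is to exhibit mutually inverse maps between the two sides, using the description (valid since $\mathcal{P}$ is cyclic binary quadratic) of $R_{\n}$ as the quotient $\lambda(R)$ recalled above, together with the cyclic structure on $\mathcal{P}$. Write $R = T_{\mathcal{P}}V = \bigoplus_{m\geqslant 1}\mathcal{P}(m)\otimes_{\mathbb{S}_m}V^{\otimes m}$; this word-length (arity) grading is preserved by the $\mathcal{P}$-operations, hence induces a grading on $R\otimes R$ and — since in the $\lambda$-description the generating relations of $R_{\n}$ are homogeneous for total word length — on $R_{\n}$ as well. First I would define the comparison map
$$\bar\Psi\,:\,\bigoplus_{n\geqslant 0}\mathcal{P}(n)\otimes_{\mathbb{S}_{n+1}}V^{\otimes(n+1)}\;\longrightarrow\; R_{\n}\,,\qquad [\mu\otimes(x_1\otimes\cdots\otimes x_{n+1})]\;\longmapsto\;[\,\mu(x_1,\ldots,x_n)\otimes x_{n+1}\,]\,.$$
That this is a well-defined chain map, natural in $V$, is immediate: specializing the generating relations of $\lambda(R)$ (taking $m=n$, the operation equal to $\mu$, and the arguments equal to the generators $x_i\in V$) says precisely that $[\mu(x_1,\ldots,x_n)\otimes x_{n+1}]$ depends only on the $\mathbb{S}_{n+1}$-orbit of $\mu\otimes(x_1\otimes\cdots\otimes x_{n+1})$, for $\mathbb{S}_{n+1}$ acting via the cyclic action on $\mathcal{P}(n)$ and diagonally on $V^{\otimes(n+1)}$. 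Compatibility with differentials is automatic, since all maps in sight are built from the structure maps of the DG operad $\mathcal{P}$ and from $d_V$.

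Next I would show $\bar\Psi$ is surjective: given a class $[a\otimes b]\in R_{\n}$ with $a\in R_p$, $b\in R_q$, if $q\geqslant 2$ then, because $\mathcal{P}$ is generated in arity $2$, $b$ is a sum of elements $\mu_2(b',b'')$ with $\mu_2\in\mathcal{P}(2)$, $b'\in R_{q'}$, $b''\in R_{q''}$, $q'+q''=q$, $q',q''\geqslant 1$; applying a generating relation of $\lambda(R)$ moves the top binary operation from the right-hand tensor factor into the left one, producing a sum of classes $[\,c\otimes b''\,]$ with $b''$ of strictly smaller word length. Iterating, $[a\otimes b]$ becomes a sum of classes $[\,c\otimes x\,]$ with $x\in V=R_1$ and $c\in R_{p+q-1}$, and such a class lies in the image of $\bar\Psi$. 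For injectivity I would construct an explicit left inverse $\Phi$. Pictorially, an element of $R_p$ is a linear combination of $\mathcal{P}$-decorated rooted trees with $p$ leaves labelled by elements of $V$; given such trees for $a\in R_p$ and $b\in R_q$, set $\Phi(a\otimes b)$ to be the class, in $\mathcal{P}(p+q-1)\otimes_{\mathbb{S}_{p+q}}V^{\otimes(p+q)}$, of the unrooted tree obtained by grafting the root of the first tree to the root of the second. Equivalently, $\Phi([\mu\otimes\vec x]\otimes[\nu\otimes\vec y]) = [\,(\mu\ast\nu)\otimes(\vec x\otimes\vec y)\,]$, where $\mu\ast\nu\in\mathcal{P}(p+q-1)$ is obtained by joining the two output legs of $\mu$ and $\nu$ regarded as elements of the cyclic operad $\mathcal{P}$. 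Using the $\mathbb{S}_{n+1}$-equivariance of the cyclic actions together with the associativity of operadic composition, one checks that the two sides of each generating relation of $\lambda(R)$ graft to the same unrooted tree, so $\Phi$ descends to a map $R_{\n}\to\bigoplus_n\mathcal{P}(n)\otimes_{\mathbb{S}_{n+1}}V^{\otimes(n+1)}$; and the cyclic unit axiom gives $\Phi\circ\bar\Psi=\mathrm{id}$, since grafting the ``edge'' tree $[\operatorname{id}\otimes x_{n+1}]$ (with $\operatorname{id}\in\mathcal{P}(1)$) onto the root of the tree of $a$ simply relabels the output of that tree as an $(n{+}1)$-st leaf. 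Combined with the surjectivity of $\bar\Psi$, this shows $\bar\Psi$ is an isomorphism of complexes with inverse $\Phi$, and it is manifestly natural in $V$.

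The main obstacle lies entirely in the bookkeeping of the last step: one must set up the ``graft-at-the-root'' operation $\ast$ rigorously in terms of the cyclic operad structure (the $\mathbb{S}_{n+1}$-equivariance of the cyclic action on $\mathcal{P}(n)$, the interchange and associativity of the operadic composition maps, and the cyclic unit axiom), verify that it descends both through the relations defining $\lambda(R)$ and through the $\mathbb{S}$-coinvariants on the target, and keep careful track of the Koszul signs generated by permuting the (homologically graded) $V$-factors and the generators of $\mathcal{P}$ past one another. Once $\Phi$ is known to be well defined on $R_{\n}$, the identity $\Phi\bar\Psi=\mathrm{id}$ and the surjectivity of $\bar\Psi$ complete the argument with no further work; as a consistency check, for $\mathcal{P}=\mathtt{Ass}$ this recovers the classical identification of $(T_kV)_{\n}$ with the cyclic words on $V$, and for $\mathcal{P}=\mathtt{Lie}$ the necklace Lie coalgebra.
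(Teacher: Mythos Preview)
Your argument is correct and follows essentially the same route as the paper's proof: both reduce an arbitrary class $[a\otimes b]\in R_{\n}$ to one of the form $[c\otimes v]$ with $v\in V$ (the paper does this in one step via the formula $(\tau^{-1}\mu)\circ_n\nu$, you do it iteratively using binary generation), and both then identify the remaining relations on $\bigoplus_n\mathcal{P}(n)\otimes_{\mathbb{S}_n}V^{\otimes n}\otimes V$ with the passage to $\mathbb{S}_{n+1}$-coinvariants. The paper's treatment of this last step is a single sentence (``using once more the coinvariance under the action of the cyclic group $\mathbb{Z}_{n+1}$''), whereas you make it explicit by constructing the grafting inverse $\Phi$; your $\mu\ast\nu$ is precisely the paper's $(\tau^{-1}\mu)\circ_n\nu$ read as an element of $\mathcal{P}(p+q-1)$ modulo $\mathbb{S}_{p+q}$. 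One small slip: your closing remark that for $\mathcal{P}=\mathtt{Lie}$ this recovers ``the necklace Lie coalgebra'' is off --- necklace Lie algebras belong to the associative case; for $\mathcal{P}=\mathtt{Lie}$ one recovers $\lambda^{(2)}$ of the free Lie algebra.
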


\begin{proof}
If $\mu\in\mathcal{P}(n)$ and $\nu\in \mathcal{P}(m)$, and $v_{i},\,w_{j}\in V$, the image of the element
$$\mu(v_{1},\,\cdots,\,v_{n})\otimes \nu(w_{1},\,\cdots,\,w_{m})\in A_{\n}$$
is equal to the image of
$$\pm(\tau^{-1}\mu)(v_{2},\,\cdots,\,v_{n},\,\nu(w_{1},\,\cdots,\,w_{m}))\otimes v_{1}$$
which is the same as
$$\pm ((\tau^{-1}\mu)\circ_{n}\nu)(v_{2},\,\cdots,\,v_{n},\,w_{1},\,\cdots,\,w_{m})\otimes v_{1}\,.$$
Thus, one can see that $R_{\n}$ is a quotient of
$$T_{\mathcal{P}}V\otimes V\cong \bigoplus_{n=0}^{\infty}\mathcal{P}(n)\otimes_{\mathbb{S}_{n}}V^{n}\otimes V\,.$$
Using once more the coinvariance under the action of the cyclic group $\mathbb{Z}_{n+1}$, it can be shown that there is an isomorphism
$$
R_{\n}\cong \bigoplus_{n=0}^{\infty}\mathcal{P}(n)\otimes_{\mathbb{S}_{n+1}}V^{(n+1)}\,.
$$
\end{proof}

\begin{remark}
By the above proof, every element in $R_{\n}$ has a representative in the form of $(\mu\otimes v_{1}\otimes \cdots \otimes v_{n})\otimes w$ for some $n\geqslant 0$ and $\mu\in \mathcal{P}(n)$, where $v_{i},\,w\in V$.
\end{remark}

The category $\mathtt{DGPA}$ is a model category where the fibrations are the degreewise surjections and the weak equivalences are the quasi-isomorphisms. By \cite[Thm. A.5]{BFPRW}, the functor $(\mbox{--})_\n\,:\,\mathtt{DGPA} \rar \Com_k\,,\,A \mapsto A_\n$ has a (total) left derived functor $\L(\mbox{--})_\n\,:\,\Ho(\mathtt{DGPA}) \rar \Ho(\Com_k)\,,\,A \mapsto R_\n$, where $R \stackrel{\sim}{\rar} A$ is any cofibrant resolution of $A$ in $\mathtt{DGPA}$. We denote the homology $\H_\bullet[\L A_\n]$ by $\HC_\bullet(\mathcal{P},A)$ and call it the {\it $\mathcal P$-cyclic homology} of $A$. When $\mathcal{P}$ is the Lie operad and when $A=\mfa$, $\HC_\bullet(\mathcal P,A)\,=\,\HC^{(2)}_{\bullet}(\mfa)$.

\subsubsection{Invariant 2-tensor}
Dually, if $\mathfrak{C}$ is a DG $\mathcal{P}$-coalgebra, a (homogeneous) 2-tensor $\alpha$ is called {\it invariant} if it is a (homogeneous) element in $\mathfrak{C}\otimes \mathfrak{C}$ such that for all $\mu\in \mathcal{P}(2)$, the composite map of complexes
$$
\begin{diagram}
k[|\alpha|] & \rTo^{\alpha} & \mathfrak{C}\otimes \mathfrak{C} & \rTo^{\mu\otimes \id-\id \otimes \mu} & \mathfrak{C}\otimes \mathfrak{C}\otimes \mathfrak{C}
\end{diagram}
$$
vanishes. We denote the subcomplex of invariant 2-tensors on $\mathfrak{C}$ by $\mathfrak{C}^{\n}$. Notice that a non-degenerate invariant bilinear form on a DG $\mathcal{P}$-algebra $A$ induces a invariant 2-tensor on its linear dual DG $\mathcal{P}$-coalgebra $A^{\ast}$.

\subsection{Derived representation schemes}
Let $\mathfrak{C}$ be a DG $\mathcal{P}$-coalgebra. Note that the complex $\mathbf{Hom}(\mathfrak{C}, \bar{B})$ naturally acquires the structure of a DG $\mathcal{P}$-algebra for any $B\,\in\,\cDGA_{k/k}$. This gives a functor
$$ \mathbf{Hom}_{\ast}(\mathfrak{C}, \mbox{--})\,:\,\cDGA_{k/k} \rar \mathtt{DGPA}\,,\qquad B \mapsto \mathbf{Hom}(\mathfrak{C}, \bar{B})\ .$$
By \cite[Prop. A.1]{BFPRW}, the functor $ \mathbf{Hom}_{\ast}(\mathfrak{C}, \mbox{--})$ has a left adjoint $\mathfrak{C} \ltimes \mbox{--}\,:\,\mathtt{DGPA} \rar \cDGA_{k/k}$. For $A\,\in\mathtt{DGPA}$, let $\pi_A\,:\,A \rar \mathbf{Hom}(\mathfrak{C}, \mathfrak{C} \ltimes A)$ denote the composite map
$$ \begin{diagram} A & \rTo & \mathbf{Hom}_\ast(\mathfrak{C}, \mathfrak{C} \ltimes A) & \rInto & \mathbf{Hom}(\mathfrak{C}, \mathfrak{C} \ltimes A)\end{diagram}\,,$$
where the first arrow is the unit of the adjunction described above. By \cite[Thm. A.2]{BFPRW}, the functor $\mathfrak{C} \ltimes \mbox{--}$ has a (total) left derived functor $\mathfrak{C} \ltimes^{\L} \mbox{--}\,:\,\Ho(\mathtt{DGPA}) \rar \Ho(\cDGA_{k/k})$. For $A\,\in\,\mathtt{DGPA}$, we call $\mathfrak{C} \ltimes^{\L} A$ the {\it derived representation scheme of $A$ over $\mathfrak{C}$} and denote it by $\DRep_{\mathfrak{C}}(A)$. The {\it representation homology} of $A$ over $\mathfrak{C}$ is the homology
$$ \HR_\bullet(A,\mathfrak{C})\,:=\,\H_\bullet[\DRep_{\mathfrak{C}}(A)]\ .$$
If $\mathfrak{C}$ is finite dimensional, $\mathscr S:=\mathfrak{C}^{\ast}$ is a $\mathcal P$-algebra and we write $\HR_\bullet(A, \mathscr S)$ (resp., $\DRep_{\mathscr S}(A)$) for $\HR_\bullet(A,\mathfrak{C})$ (resp., $\DRep_{\mathfrak{C}}(A)$).

\subsubsection{Traces}
Assume that $\mathfrak{C}$ is equipped with a degree $0$ invariant $2$-tensor %

$$ \mathrm{coTr}\,:\, k \rar \mathfrak{C}^{\n}\ .$$
Abusing notation, let $\pi_{A \otimes A}$ denote the composite map
$$ \begin{diagram} A \otimes A & \rTo^{\pi_A^{\otimes 2}} & \mathbf{Hom}(\mathfrak{C}, \mathfrak{C} \ltimes A)^{\otimes 2} \,\cong\,\mathbf{Hom}(\mathfrak{C} \otimes \mathfrak{C}, (\mathfrak{C} \ltimes A)^{\otimes 2}) & \rTo & \mathbf{Hom}(\mathfrak{C} \otimes \mathfrak{C},\mathfrak{C} \ltimes A)\end{diagram}\,,$$
where the last arrow is induced by the product on $\mathfrak{C} \ltimes A$. The map of complexes
$$\begin{diagram} A \otimes A & \rTo^{\pi_{A \otimes A}} & \mathbf{Hom}(\mathfrak{C} \otimes \mathfrak{C},\mathfrak{C} \ltimes A) & \rTo & \mathbf{Hom}(\mathfrak{C}^{\n},\mathfrak{C} \ltimes A)\end{diagram}$$
induced by the inclusion $\mathfrak{C}^{\n} \hookrightarrow \mathfrak{C} \otimes \mathfrak{C}$ factors through $A_\n$, giving a map of complexes
$$ A_\n \rar \mathbf{Hom}(\mathfrak{C}^{\n},\mathfrak{C} \ltimes A)\ .$$
Composing this with the map $\mathbf{Hom}(\mathfrak{C}^\n,\mathfrak{C} \ltimes A) \rar \mathbf{Hom}(k, \mathfrak{C} \ltimes A)$ induced by $\mathrm{coTr}$, we obtain map
$$ \Tr\,:\,A_\n \rar \mathfrak{C} \ltimes A\ .$$
By construction, $\Tr$ gives a natural transformation of functors $(\mbox{--})_\n \rar \mathfrak{C} \ltimes \mbox{--}$ (which, of course, depends on the choice of $\mathrm{coTr}$). As a result, this gives a natural transformation of {\it derived functors}
$$ \L (\mbox{--})_\n \rar \mathfrak{C} \ltimes^{\L} \mbox{--}\,:\,\Ho(\mathtt{DGPA}) \rar \Ho(\Com_k) \ . $$
Applying this to $A\,\in\,\mathtt{DGPA}$ and taking homologies, we obtain maps of graded vector spaces
$$\Tr\,:\,\HC_\bullet(\mathcal{P}, A)\rar \HR_\bullet(A,\mathfrak{C}) \ .$$
If $\mathcal{P}$ is the Lie operad, $\mathfrak{C}=\g^{\ast}$ for $\g$ semisimple and $\mathrm{coTr}$ is the Killing form, then the above construction recovers the Drinfeld trace associated with the Killing form.

\subsubsection{Poisson structures on representation homology}
We say that $\mathfrak{C}$ is {\it cyclic} if it is equipped with a symmetric bilinear form $\langle \mbox{--}\,,\,\mbox{--} \rangle\,:\, \mathfrak{C} \times \mathfrak{C} \rar k$ such that for all $\mu\in\mathcal{P}(2)$, the following diagram commutes
\begin{equation}\la{OpCCD}
\begin{diagram}[small]
\mathfrak{C} \otimes \mathfrak{C} &\rTo^{\mu\otimes \id} & \mathfrak{C}\otimes \mathfrak{C}\otimes \mathfrak{C}\\
\dTo^{\id \otimes \mu} & & \dTo_{\id \otimes \langle \mbox{--}\,,\,\mbox{--} \rangle}\\
\mathfrak{C}\otimes \mathfrak{C}\otimes \mathfrak{C} & \rTo^{\langle \mbox{--}\,,\,\mbox{--} \rangle\otimes \id} & \mathfrak{C}
\end{diagram}
\end{equation}

\begin{remark}
If the pairing $\langle \mbox{--}\,,\,\mbox{--} \rangle$ is also non-degenerate, then it induces an isomorphism between $\mathfrak{C}$ and its linear dual $\mathfrak{C}^{\ast}$. In this case, it also induces an invariant bilinear form on $\mathfrak{C}^{\ast}$ under the above isomorphism.
\end{remark}

Let $\cb_{\mathcal{Q}}\,:\,\mathtt{DGQC} \rar \mathtt{DGPA}$ denote the cobar construction (see \cite[Sec. 11.2.5]{LV}). For $C\in \mathtt{DGQC}$, the underlying graded algebra of $\cb_{\mathcal{Q}}(C)$ is the free $\mathcal{P}$-algebra generated by $C[-1]$, and the differential on $\cb_{\mathcal{Q}}(C)$ is the sum of two degree $-1$ derivations, one induced by the differential on $C$ and the other induced by operations from $\mathcal{Q}$. We say that $C\,\in\,\mathtt{DGQC}$ is {\it Koszul dual} to $A\,\in\,\mathtt{DGPA}$ if there is a quasi-isomorphism $\cb_{\mathcal{Q}}(C) \stackrel{\sim}{\rar} A$.

\begin{proposition}\la{OpLCBPoiss}
Let $A\,\in\,\mathtt{DGPA}$ be Koszul dual to $C\,\in\,\mathtt{DGQC}$. If $C$ is cyclic, and if $\mathfrak{C}\,\in\,\mathtt{DGPA}$ is cyclic, then $\DRep_{\mathfrak{C}}(A)$ acquires a derived Poisson structure. As a consequence, $\HR_\bullet(A,\mathfrak{C})$ acquires a graded Poisson structure.
\end{proposition}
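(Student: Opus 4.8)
The plan is to exhibit an explicit cofibrant commutative DG algebra representing $\DRep_{\mathfrak{C}}(A)$ and carrying a genuine DG Poisson bracket; this is the operadic counterpart of Lemma~\ref{poissce} and its use in Section~\ref{prep}. Put $R := \cb_{\mathcal Q}(C)$ and $V := C[-1]$, so that $R \stackrel{\sim}{\rar} A$ is a cofibrant resolution in $\mathtt{DGPA}$ and, as a graded $\mathcal P$-algebra, $R$ is free on $V$. Since $\mathfrak{C}\ltimes(\,\mbox{--}\,)$ is left Quillen (see~\cite[Thm. A.2]{BFPRW}), the commutative DG algebra $\mathfrak{C}\ltimes R$ is cofibrant and $\DRep_{\mathfrak{C}}(A)\cong\mathfrak{C}\ltimes R$ in $\Ho(\cDGA_{k/k})$. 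The first step is to make this model explicit: applying the adjunction $\Hom_{\cDGA_{k/k}}(\mathfrak{C}\ltimes(\,\mbox{--}\,),B)\cong\Hom_{\mathtt{DGPA}}(\,\mbox{--}\,,\mathbf{Hom}(\mathfrak{C},\bar B))$ to the free $\mathcal P$-algebra $R$ and using tensor--hom adjunction gives, by Yoneda, a natural isomorphism of graded commutative algebras $\mathfrak{C}\ltimes R\cong\Sym_k(V\otimes\mathfrak{C})$, under which the differential becomes a Chevalley--Eilenberg-type derivation whose linear part is induced by the internal differentials of $C$ and $\mathfrak{C}$ and whose quadratic part $V\otimes\mathfrak{C}\rar\Sym^2_k(V\otimes\mathfrak{C})$ is induced by the $\mathcal Q$-cooperations of $C$ together with the $\mathcal P$-operations of $\mathfrak{C}$. (This is the operadic analogue of the identifications $R_n\cong\C^c(\gl_n^{\ast}(\bar{C});k)$ and $\DRep_\g(\mfa)\cong\C^c(\g^{\ast}(\bar{C});k)$ used above.)

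Next I construct the bracket on generators. The cyclic pairing of $C$ (of degree $n$), transported through the shift $[-1]$, becomes a graded skew-symmetric pairing on $V$; tensoring it with the symmetric invariant bilinear form carried by $\mathfrak{C}$ produces a graded skew-symmetric pairing $\langle\mbox{--},\mbox{--}\rangle$ on $V\otimes\mathfrak{C}$, of degree $n+2$. As in the necklace/symplectic situation of Section~\ref{necklace}, any such pairing on a graded vector space extends uniquely to a graded Poisson bracket on the free graded commutative algebra it generates: declare $\{\mbox{--},\mbox{--}\}$ to coincide with $\langle\mbox{--},\mbox{--}\rangle$ on $V\otimes\mathfrak{C}$ and extend by the graded Leibniz rule in each argument. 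Graded antisymmetry and the Jacobi identity for this ``constant'' bracket are automatic, since the bracket of two generators is a scalar. Thus $\Sym_k(V\otimes\mathfrak{C})$ acquires a graded Poisson bracket of degree $n+2$ which so far ignores the differential.

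The one genuine issue --- and the step I expect to be the main obstacle --- is to check that this bracket is a chain map for the differential of $\mathfrak{C}\ltimes R\cong\Sym_k(V\otimes\mathfrak{C})$. Compatibility with the linear part of the differential is exactly the DG-compatibility (the chain-map condition $\langle dx,y\rangle\pm\langle x,dy\rangle=0$) of the two cyclic pairings, precisely as in Lemma~\ref{poissce}. Compatibility with the quadratic part is the operadic heart of the matter: unwinding the biderivation extension of the bracket against the quadratic part of the differential, the desired identity reduces to a finite bilinear computation in $\mathcal P(2)$ and $\mathcal Q(2)$ that follows from the cyclicity of the pairing on $C$ (the condition dual to \eqref{OpCCD}), the cyclicity diagram \eqref{OpCCD} for $\mathfrak{C}$, and the fact that a cyclic structure on the binary quadratic operad $\mathcal P$ induces one on its Koszul dual $\mathcal Q=\mathcal P^{!}$ \cite{GK}, so that the cooperation structure on $C$ and the operation structure on $\mathfrak{C}$ are compatibly cyclic. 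This is the direct analogue of the Lie and associative verifications carried out earlier in the paper, the expected burden being the bookkeeping of Koszul signs and $\mathbb{S}_3$-equivariance. Granting it, $\mathfrak{C}\ltimes R$ is a commutative DG Poisson algebra; being a cofibrant model of $\DRep_{\mathfrak{C}}(A)$, it is by definition a derived Poisson structure on $\DRep_{\mathfrak{C}}(A)$ (cf. Section~\ref{Defs} and the Convention of Section~\ref{convention}), and passing to homology yields the asserted graded Poisson structure on $\HR_\bullet(A,\mathfrak{C})=\H_\bullet[\mathfrak{C}\ltimes R]$.
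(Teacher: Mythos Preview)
Your approach is correct and reaches the same destination as the paper, but the paper takes a cleaner conceptual shortcut that sidesteps the hands-on compatibility computation you flag as the ``expected burden.'' Rather than building the bracket on $\Sym_k(V\otimes\mathfrak{C})$ and checking compatibility with the quadratic differential directly, the paper observes that $\mathfrak{C}\otimes C$ is a conilpotent DG coalgebra over the Hadamard product $\mathcal P\otimes_{\H}\mathcal Q$, that the tensor of two cyclic pairings is again cyclic over $\mathcal P\otimes_{\H}\mathcal Q$, and then pulls everything back along the canonical operad map $\mathtt{Lie}\to\mathcal P\otimes_{\H}\mathcal Q$ (\cite[Prop.~7.6.5]{LV}). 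This exhibits $\mathfrak{C}\otimes C$ as a \emph{cyclic Lie coalgebra}, so Lemma~\ref{poissce} applies verbatim to give the DG Poisson structure on $\cb_{\mathtt{Lie}}[\mathcal Lie^c(\mathfrak{C}\otimes C)]$, and \cite[Thm.~A.4]{BFPRW} identifies this with a cofibrant model of $\DRep_{\mathfrak{C}}(A)$. Unpacking that Chevalley--Eilenberg algebra gives exactly your $\Sym_k(V\otimes\mathfrak{C})$ with the bracket you wrote down, so the two arguments coincide on the nose; the paper's route just packages the sign/$\mathbb S_3$ bookkeeping into the single statement that cyclicity survives the functor $\mathtt{Lie}\to\mathcal P\otimes_{\H}\mathcal Q$.

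One small point: you appeal to the fact that a cyclic structure on $\mathcal P$ induces one on $\mathcal Q=\mathcal P^{!}$, but at this stage of the paper $\mathcal P$ is \emph{not} assumed to be a cyclic operad (that hypothesis is introduced only later, in Section~5.3). The paper's argument makes clear this is unnecessary: only the cyclicity of the pairings on $\mathfrak{C}$ and $C$ (in the sense of diagram~\eqref{OpCCD} and its dual) is used, and these are conditions on binary (co)operations that make sense for any binary quadratic $\mathcal P$.
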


\bproof

 Note that $\mathfrak{C} \otimes C$ is a conilpotent DG coalgebra over the Hadamard product $P \otimes_{\H} Q$. Further, it is not difficult to verify that if $\mathfrak{C}$ (resp., $C$) is a cyclic DG $\mathcal{P}$-coalgebra (resp., cyclic DG $\mathcal{Q}$-coalgebra), then $\mathfrak{C}\otimes C$ is cyclic as a DG coalgebra over $\mathcal{P}\otimes_{\H}\mathcal{Q}$. By \cite[Propositon 7.6.5]{LV}, there is a morphism of operads $\mathtt{Lie} \rar \mathcal{P}\otimes_{\H}\mathcal{Q}$. Let ${\mathcal Lie}^c({\mathfrak{C}} \otimes C)$ denote $\mathfrak{C} \otimes C$ viewed as a Lie coalgebra. It follows that ${\mathcal Lie}^c({\mathfrak{C}} \otimes C)$ is a cyclic Lie coalgebra. By \cite[Lemma 5.1]{BRZ},
$\cb_{\mathtt{Lie}}[{\mathcal Lie}^c({\mathfrak{C}} \otimes C)]$ acquires a DG Poisson structure. Since $\cb_{\mathtt{Lie}}[{\mathcal Lie}^c({\mathfrak{C}} \otimes C)]$ is a cofibrant representative of $\DRep_{\mathfrak{C}}(A)$ by \cite[Thm. A.4]{BFPRW}, the desired proposition follows.
\eproof

\subsection{Derived Poisson structures on algebras over operads}

Let $A$ be a DG $\mathcal{P}$-algebra. Let $\Der_{\mathcal{P}}(A)$ denote the DG Lie algebra of derivations from $A$ to itself. Then $A$ and therefore, $A \otimes A$ are DG Lie modules over $\Der_{\mathcal{P}}(A)$. It is not difficult to verify that the $\Der_{\mathcal{P}}(A)$-module structure on $A \otimes A$ descends to a $\Der_{\mathcal{P}}(A)$-module structure on $A_\n$. Following Crawley-Boevey \cite{CB}, we define a {\it Poisson structure} on $A$ to be a Lie bracket $\{ \mbox{--},\mbox{--}\}$ on $A_\n$ such that for all $\alpha\,\in\,A_\n$, the operator $\{\alpha,\mbox{--}\}\,:\,A_\n \rar A_\n$ is induced by a derivation $\partial_{\alpha}\,\in\,\Der_{\mathcal{P}}(A)$. A {\it derived Poisson structure} on $A$ is defined to be a Poisson structure on some cofibrant resolution of $A$.

For the rest of this section, we further assume that the operad $\mathcal{P}$ is cyclic. Recall that in this case, there is an action of the cyclic permutation $\tau$ of order $n+1$ on $\mathcal{P}(n)$ for each $n \geqslant 0$. By \cite[13.14.6]{LV}, $\tau$ may be thought of as changing the last input into the output and the output into the first input. The following formulas relate the $\tau$-action with the partial compositions of $\mathcal{P}$:
\begin{eqnarray*}
\tau(\mu\circ_{i}\nu) &=& \tau(\mu)\circ_{i+1}\nu,\ \ \ \ 1\leqslant i <n\,,\\
\tau(\mu\circ_{n}\nu) &=& \tau(\nu)\circ_{1}\tau(\mu)\,,
\end{eqnarray*}
where $\mu\in\mathcal{P}(n)$ and $\nu\in\mathcal{P}(m)$. The reader may refer to \cite[Fig. 13.2]{LV} for the illustration of the formulas above. For fixed $n$, we write $N=\sum_{i=0}^{n}\tau^{i}$.

Let $A$ be Koszul dual to a {\it cyclic} (conilpotent) DG $\mathcal{Q}$-coalgebra $C$. Let $\mathfrak{C}$ be a finite dimensional $\mathcal{P}$-coalgebra in degree $0$ equipped with an invariant $2$-tensor $\mathrm{coTr}\,:\,k \rar \mathfrak{C}^\n$. Let $\mathscr S\,:=\,\mathfrak{C}^{\ast}$. Let $\Tr\,:\,\mathscr S_{\n} \rar k$ denote the dual of $\mathrm{coTr}$. Assume that the pairing
$$ \begin{diagram} \mathscr{S} \otimes \mathscr{S} & \rTo & \mathscr{S}_\n & \rTo^{\Tr} & k \end{diagram} $$
is non-degenerate. Since the above pairing is cyclic on $\mathscr{S}$ by definition, it induces a cyclic pairing on $\mathfrak{C}$ as well. By Proposition \ref{OpLCBPoiss}, $\HR_\bullet(A,\mathscr S):= \HR_\bullet(A,\mathfrak{C})$ acquires a graded Poisson structure. The following generalization of Theorem \ref{t3s2int} and Theorem \ref{liehomom} is the main result of this section.

\begin{theorem} \la{operadpoiss}
There is a derived Poisson structure on $A$ such that $\Tr\,:\,\HC_\bullet(\mathcal{P},A) \rar \HR_\bullet(A,\mathscr S)$ is a graded Lie algebra homomorphism.
\end{theorem}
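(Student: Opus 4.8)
The plan is to follow the same strategy that was used in the Lie and associative cases above, only now carried out at the operadic level of generality. The first step is to exhibit the derived Poisson structure on $A$: take $R := \cb_{\mathcal{Q}}(C) \stackrel{\sim}{\rar} A$ as the cofibrant resolution. As a graded $\mathcal{P}$-algebra, $R = T_{\mathcal P}(C[-1])$, so by Proposition \ref{OpNatural} every element of $R_\n$ has a representative of the form $(\mu \otimes v_1 \otimes \cdots \otimes v_n)\otimes w$ with $\mu \in \mathcal P(n)$, $v_i, w \in V := \bar C[-1]$. Using the cyclic pairing on $C$ (which induces, via the degree shift, a skew pairing on $V$ of degree $n+2$) together with the cyclic structure on $\mathcal P$, one writes down a bracket on $R_\n$ by a ``cyclic derivative'' formula: differentiate a representative with respect to its $V$-slots using the $N$-operator $N = \sum_{i=0}^{n}\tau^i$, contract two slots against the pairing, and glue the resulting partial operations using the compatibility relations between $\tau$ and the $\circ_i$. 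The key structural fact to verify is that this operation is well-defined on $R_\n$ (independent of the choice of representative), satisfies the Jacobi identity, and that $\{\alpha, \mbox{--}\}$ is induced by a $\mathcal P$-derivation $\partial_\alpha \in \Der_{\mathcal P}(R)$ — i.e. it really is a Poisson structure in the sense defined just above. This is the analog of \cite[Theorem 15]{BCER} and I expect it to be the most technical part, though it is essentially a bookkeeping exercise in cyclic-operad combinatorics.

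The second step is to produce the Poisson structure on $\HR_\bullet(A,\mathscr S)$: this is exactly Proposition \ref{OpLCBPoiss}, using that $\mathfrak C \otimes C$ is a cyclic conilpotent coalgebra over $\mathcal P \otimes_{\H} \mathcal Q$, that $\mathtt{Lie} \rar \mathcal P \otimes_{\H}\mathcal Q$, and that $\cb_{\mathtt{Lie}}[\mathcal{L}ie^c(\mathfrak C \otimes C)]$ is a cofibrant model of $\DRep_{\mathfrak C}(A)$ carrying the DG Poisson structure of Lemma \ref{poissce}. Write $\mathcal S := \mathfrak C \ltimes R$ (or rather the Lie-cobar model) for this commutative DG algebra; $\Der_{\mathcal P}(R)$ acts on it through the universal representation $\pi_R : R \rar \mathbf{Hom}(\mathfrak C, \mathcal S)$, and the $R_\n$-action on $\mathcal S$ is the one obtained by composing the Poisson action of $\mathcal S$ on itself with the trace $\Tr : R_\n \rar \mathcal S$. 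So what must be shown is that $\Tr$ is a map of DG Lie algebras, equivalently — since $\Tr$ is built from $\pi_R$ — that $\pi_R : R \rar \mathbf{Hom}(\mathfrak C, \mathcal S)$ is $R_\n$-equivariant.

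The third step reduces this equivariance to a single identity on generators. Since $R$ is freely generated as a $\mathcal P$-algebra by $V$, and $R_\n$ acts by $\mathcal P$-derivations on both sides while $\pi_R$ is a $\mathcal P$-algebra morphism, it suffices to check $\pi_R(\{\alpha, u\}) = \{\Tr(\alpha), \pi_R(u)\}$ for $\alpha \in R_\n$ and $u \in V$. Both sides are controlled by the pairings: the left side by the restriction of the $R_\n$-action on $R$ to $V$, which is (cyclic derivative)$\otimes$(pairing on $V$); the right side by the restriction of the Poisson action of $\mathcal S$ on itself to the subspace $\mathfrak C^\ast \otimes V$, which is $(d)\otimes$(pairing on $\mathfrak C^\ast \otimes V)$, and the fact that the pairing on $\mathfrak C^\ast \otimes V$ is the tensor product of the pairing on $V$ with the pairing $\langle M, N\rangle$ dual to $\mathrm{coTr}$. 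The computation then comes down to the statement that feeding $\mathrm{coTr} = \sum_\alpha s_\alpha \otimes s^\alpha \in \mathscr S \otimes \mathscr S$ (dual basis) through the pairing recovers the identity, exactly as in the last line of the proof of Proposition \ref{univrep} and in the diagram \eqref{trcheck} of Proposition \ref{urepeq}. I would also need the commuting square relating the cyclic derivative $\bar\partial$ on $R_\n$ to the universal derivation $d$ on $\mathcal S$ (the operadic analog of Lemma \ref{lderham}/\cite[Lemma 5.2]{BKR}), which itself follows by the same generators-and-relations argument.

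The main obstacle is Step 1: setting up the explicit bracket on $R_\n = \bigoplus_n \mathcal P(n)\otimes_{\mathbb S_{n+1}} V^{\otimes(n+1)}$ and verifying its Poisson-ness, since the cyclic-operad combinatorics (tracking how $\tau$ interacts with partial compositions when two necklace-like elements are ``spliced'' along a contracted pair of $V$-slots) is considerably more involved than in the associative case where $\mathcal P = \mathtt{Ass}$ and everything is governed by planar pictures. Once that bracket and the companion $\bar\partial$-vs-$d$ square are in hand, Steps 2 and 3 are formal, and the theorem follows on passing to homology, recovering Theorem \ref{t3s2int} when $\mathcal P = \mathtt{Ass}$ and Theorem \ref{liehomom} when $\mathcal P = \mathtt{Lie}$, $\mathfrak C = \g^\ast$, $\mathrm{coTr} = $ Killing form.
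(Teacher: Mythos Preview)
Your proposal is correct and follows essentially the same route as the paper: define the bracket on $R_\n$ via the cyclic derivative $\bar\partial$ and the pairing on $V$ (the paper's formula \eqref{OpcbLie1}), verify directly that it is a Poisson structure (Lemma \ref{OpcbLie}, which is indeed the main combinatorial work), then prove $\pi_R$ is $R_\n$-equivariant by reducing to generators via the commuting square relating $\bar\partial$ and $d$ (Lemma \ref{OpderDR}, the operadic analogue of Lemma \ref{lderham}), and conclude that $\Tr$ is a DG Lie algebra homomorphism. The only cosmetic discrepancy is that in this section the paper sets $V:=C[-1]$ rather than $\bar C[-1]$, since $C$ here is a non-coaugmented $\mathcal Q$-coalgebra.
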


\subsubsection{Proof of Theorem \ref{operadpoiss}}

Let $V:=C[-1]$. There is an isomorphism of $\mathcal{P}$-algebras $\cb_{\mathcal{Q}}(C)\,\cong\,T_{\mathcal{P}}V$, where $T_{\mathcal{P}}V$ is the free $\mathcal{P}$-algebra generated by $V$. For notational brevity, let $R:=T_{\mathcal{P}}V$. We define the {\it cyclic derivative} $\bar{\partial}\,:\,R_{\n}\rar R \otimes V$ as follows. On the direct summand $\mathcal{P}(n)\otimes_{\mathbb{S}_{n+1}}V^{(n+1)}$ of $R_{\n}$, it is given by
\begin{equation}\label{Opcyclicder}
\begin{diagram}
\mathcal{P}(n)\otimes_{\mathbb{S}_{n+1}}V^{(n+1)} & \rTo^{N\cdot(\mbox{--})} & \mathcal{P}(n)\otimes_{\mathbb{S}_{n+1}}V^{(n+1)}  & \rTo & \mathcal{P}(n)\otimes_{\mathbb{S}_{n}}V^{n}\otimes V\,,
\end{diagram}
\end{equation}
where $N$ acts diagonally and where the last arrow is the obvious map that permutes no factors. Explicitly, given $\mu\otimes v_{1}\otimes \cdots \otimes v_{n}\otimes v_{n+1}\in R_{\n}$,
\begin{eqnarray}\label{Opcyclicder2}
\bar{\partial}(\mu\otimes v_{1}\otimes \cdots \otimes v_{n}\otimes v_{n+1}) &=&\sum_{i=1}^{n+1}(\tau^{n+1-i}\mu)\otimes \tau^{n+1-i}(v_{1}\otimes \cdots \otimes v_{n}\otimes v_{n+1})\nonumber\\
&=& \sum_{i=1}^{n+1}\pm(\tau^{n+1-i}\mu)\otimes(v_{i+1}\otimes \cdots \otimes v_{n+1}\otimes v_{1}\cdots \otimes v_{i-1}\cdots v_{i})\nonumber\\
&=& \sum_{i=1}^{n+1}\pm(\tau^{n+1-i}\mu)(v_{i+1}\otimes \cdots \otimes v_{n+1}\otimes v_{1}\cdots \otimes v_{i-1})\otimes v_{i} \ .
\end{eqnarray}

Note that the cyclic pairing on $C$ gives a skew-symmetric pairing on $V$. Following \cite[Sec. 6]{G}, define a bilinear bracket $\{ \mbox{--}\,,\,\mbox{--} \}\,:\,R_{\n}\otimes R_{\n}\rar R_{\n}$ as the following composition
\begin{equation}\label{OpcbLie1}
\begin{diagram}
R_{\n}\otimes R_{\n} & \rTo^{\bar{\partial}\otimes \bar{\partial}} & (R\otimes V)^{\otimes 2} & \rTo^{\sim} & R\otimes R \otimes V \otimes V & \rTo^{\Id_{R\otimes R}\otimes \langle \mbox{--}\,,\,\mbox{--} \rangle} & R\otimes R & \rTo & R_{\n}\,,
\end{diagram}
\end{equation}
where the last arrow is the canonical projection from $R\otimes R$ to $R_{\n}$. By Lemma \ref{OpcbLie} below, the above bracket defines a Poisson structure on $\cb_{\mathcal{Q}}(C)$: the compatibility of the Lie bracket on $R_{\n}$ constructed therein with the differential on $\cb_{\mathcal{Q}}(C)_{\n}$ follows from the cyclicity of $C$. By definition, this gives $A$ a derived Poisson structure.

\begin{lemma}\label{OpcbLie}
The bracket \eqref{OpcbLie1} defines a Poisson structure on $R$. In particular, it is a Lie bracket on $R_{\n}$.
\end{lemma}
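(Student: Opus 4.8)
The plan is to exhibit the bracket \eqref{OpcbLie1} as one induced by derivations of $R = T_{\mathcal P}V$; this realizes it as a Poisson structure and reduces the Lie-algebra axioms to explicit, if signful, checks. For $\alpha \in R_{\n}$ write its cyclic derivative as $\bar{\partial}\alpha = \sum_i a_i \otimes v_i \in R \otimes V$, and let $\partial_{\alpha}$ be the unique derivation of the free $\mathcal P$-algebra $R$ whose value on a generator $w \in V$ is $\partial_{\alpha}(w) = \sum_i \pm\langle v_i, w\rangle\, a_i$ (existence and uniqueness being the standard isomorphism $\Der_{\mathcal P}(T_{\mathcal P}V) \cong \Hom(V,R)$; the signs follow the Koszul rule). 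Because any derivation of $R$ commutes with the operations in $\mathcal P(2)$, the operator $\partial_{\alpha}\otimes\Id + \Id\otimes\partial_{\alpha}$ on $R\otimes R$ preserves the subcomplex defining $R_{\n}$ and so descends to an operator $(\partial_{\alpha})_{\n}$ on $R_{\n}$.

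The crucial point is the identity $\{\alpha,\beta\} = (\partial_{\alpha})_{\n}(\beta)$ for all $\alpha,\beta \in R_{\n}$. I would prove it by taking, as allowed by Proposition \ref{OpNatural}, a representative $\beta = \overline{\mu(w_1,\dots,w_m) \otimes w_{m+1}}$ with $\mu\in\mathcal P(m)$ and $w_i\in V$, expanding $(\partial_{\alpha})_{\n}(\beta)$ by the Leibniz rule, expanding $\bar{\partial}\beta$ by \eqref{Opcyclicder2}, and comparing. The $\Id\otimes\partial_{\alpha}$ contribution matches the summand of $\{\alpha,\beta\}$ in which the distinguished last slot of $\beta$ is paired off, while the summand $\pm\mu(w_1,\dots,\partial_{\alpha}w_k,\dots,w_m)\otimes w_{m+1}$ from $\partial_{\alpha}\otimes\Id$ matches the $k$-th summand of $\bar{\partial}\beta$ after a cyclic rotation of the underlying necklace — i.e. after invoking the $\mathbb Z_{m+1}$-coinvariance inside Proposition \ref{OpNatural} together with the compatibility of the $\tau$-action with the partial compositions $\circ_i$ recorded just before the lemma. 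Granting this identity, $\{\alpha,\mbox{--}\}$ is by construction induced by $\partial_{\alpha}\in\Der_{\mathcal P}(R)$, which is precisely the condition in the definition of a Poisson structure on a $\mathcal P$-algebra.

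It then remains to check that $\{\mbox{--},\mbox{--}\}$ is a graded Lie bracket on $R_{\n}$. Antisymmetry is a sign chase: transposing the two tensor factors in \eqref{OpcbLie1} introduces a Koszul sign, the pairing on $V$ induced by the cyclic pairing on $C$ is graded skew-symmetric, and the canonical projection $R\otimes R\to R_{\n}$ is graded symmetric (as noted above), and these combine to give $\{\beta,\alpha\} = -(-1)^{|\alpha||\beta|}\{\alpha,\beta\}$. For the Jacobi identity I would use the derivation description: since $\{\alpha,\mbox{--}\} = (\partial_{\alpha})_{\n}$, graded Jacobi for $\{\mbox{--},\mbox{--}\}$ is equivalent to the operator identity $(\partial_{\{\alpha,\beta\}})_{\n} = [\partial_{\alpha},\partial_{\beta}]_{\n}$ on $R_{\n}$, and as both sides are induced by derivations of the free algebra $R$ it suffices to verify $\partial_{\{\alpha,\beta\}} = [\partial_{\alpha},\partial_{\beta}]$ on the generators $V$ — a signed ``chain rule'' computation for the cyclic derivative that again uses only skew-symmetry of $\langle\mbox{--},\mbox{--}\rangle$ on $V$ and the cyclic structure of $\mathcal P$. (Alternatively, for the underlying graded bracket one may cite \cite[Sec.~6]{G}, where the necklace bracket of a free algebra over a cyclic operad is shown to satisfy Jacobi; the differential compatibility, upgrading it to a \emph{DG} Poisson structure, then follows from the cyclicity of $C$, as indicated in the text preceding the lemma.) The main obstacle is precisely the bookkeeping in the second and third paragraphs: controlling the $\mathbb S_{n+1}$-coinvariances and the Koszul signs in the operadic setting is where the real work lies, while the surrounding steps are formal.
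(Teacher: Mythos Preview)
Your proposal is correct and follows essentially the same route as the paper. The paper's explicit action $\{\alpha,\mbox{--}\}$ of $R_\n$ on $R$ is precisely your derivation $\partial_\alpha$; its verification that the projection $p:R\otimes R\to R_\n$ is $R_\n$-equivariant is exactly your identity $\{\alpha,\beta\}=(\partial_\alpha)_\n(\beta)$; and its identity $\{\{\alpha,\beta\},u\}=\{\alpha,\{\beta,u\}\}\mp\{\beta,\{\alpha,u\}\}$ for $u\in V$ is exactly your reduction $\partial_{\{\alpha,\beta\}}=[\partial_\alpha,\partial_\beta]$ on generators, from which both arguments extend by the derivation property to full Jacobi. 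The only difference is packaging: you organize the argument around the map $\alpha\mapsto\partial_\alpha$ into $\Der_{\mathcal P}(R)$, while the paper writes out the corresponding operadic formulas with partial compositions $\circ_j$ and cyclic rotations $\tau$ and matches terms explicitly.
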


\begin{proof}
Since the canonical projection $R \otimes R \twoheadrightarrow R_\n$ is symmetric, the bracket $\{\mbox{--},\mbox{--}\}$ on $R_{\n}$ is skew-symmetric.
Define an action\footnote{At this stage, we have {\it not} proven any fact about the structure of $R_{\n}$. The term `action' of $R_{\n}$ on a vector space $V$ simply means a linear map from $R \otimes V \rar V$.} of $R_{\n}$ on $R$ via
\begin{equation*}
\{\alpha,\,x\}=\sum_{i=1}^{n+1}\sum_{j=1}^{m}\pm\langle v_{i},\,w_{j}\rangle (\nu\circ_{j}(\tau^{n+1-i}\mu))\otimes (w_{1}\otimes \cdots \otimes w_{j-1}\otimes v_{i+1}\otimes\cdots\otimes v_{i-1}\otimes w_{j+1}\otimes \cdots\otimes w_{m})\,.
\end{equation*}
where $\alpha=\mu\otimes v_{1}\otimes \cdots \otimes v_{n}\otimes v_{n+1}\in R_{\n}$ and $x=\nu\otimes w_{1}\otimes \cdots \otimes w_{m}\in R$. By definition, the endomorphism $\{\alpha,\mbox{--}\}\,:\, R \rar R$ is a graded derivation for any (homogeneous) $\alpha\,\in\,R_{\n}$. The above action of $R_{\n}$ on $R$ extends to an action of $R_{\n}$ on $R \otimes R$ via the Leibniz rule.

To prove that the bracket \eqref{OpcbLie1} satisfies the Jacobi identity, we begin by showing that the canonical projection $p: R \otimes R \rar R_{\n}$ is $R_{\n}$-equivariant,  i.e., given $\alpha=\mu\otimes v_{1}\otimes \cdots \otimes v_{n}\otimes v_{n+1}\in R_{\n}$, $x=\mu'\otimes w_{1}\otimes \cdots \otimes w_{m},\,y=\mu''\otimes u_{1}\otimes \cdots \otimes u_{l}\in R$, we want to show that
\begin{equation}\label{Uninveq}
\{\alpha,\,p(x\otimes y)\}=p\big(\{\alpha,\,x\otimes y\}\big)=p\big(\{\alpha,\,x\}\otimes y+\pm x\otimes \{\alpha,\,y\}\big)\,,
\end{equation}
Notice that
\begin{eqnarray*}
p(x\otimes y)&=&\pm(\tau^{-1}\mu')\otimes w_{2}\otimes\cdots\otimes w_{m}\otimes\mu''(u_{1},\,\cdots,\,u_{l})\otimes w_{1}\\
&=&\pm((\tau^{-1}\mu')\circ_{n}\mu'')\otimes (w_{2}\otimes\cdots\otimes w_{m}\otimes u_{1}\otimes \cdots\otimes u_{l}\otimes w_{1})\,.
\end{eqnarray*}

Hence
\begin{eqnarray}
\{\alpha,\,p(x\otimes y)\} &=& \sum_{i=1}^{n+1}\sum_{j=1}^{m}\pm\langle v_{i},\,w_{j}\rangle ((\tau^{n-i}\mu)\circ_{n}(\tau^{m+1+l-j}((\tau^{-1}\mu')\circ_{n}\mu'')))\otimes\nonumber\\
&&(v_{i+2}\otimes \cdots \otimes v_{i-1}\otimes w_{j+1}\otimes\cdots\otimes w_{m}\otimes u_{1}\otimes \cdots\otimes u_{l}\otimes w_{1}\otimes \cdots \otimes w_{j-1}\otimes v_{i+1})\nonumber\\
&+&\sum_{i=1}^{n+1}\sum_{k=1}^{l}\pm\langle v_{i},\,u_{k}\rangle((\tau^{n-i}\mu)\circ_{n}(\tau^{l+1-k}((\tau^{-1}\mu')\circ_{n}\mu'')))\otimes\nonumber\\
&&(v_{i+2}\otimes \cdots \otimes v_{i-1}\otimes u_{k+1}\otimes\cdots\otimes u_{l}\otimes w_{1}\otimes \cdots \otimes w_{m}\otimes u_{1}\otimes \cdots\otimes u_{k-1}\otimes v_{i+1})\nonumber\\
&=& \sum_{i=1}^{n+1}\sum_{j=1}^{m}\pm\langle v_{i},\,w_{j}\rangle ((\tau^{n-i}\mu)\circ_{n}((\tau^{m+1-j}\mu')\circ_{m+1-j}\mu''))\otimes\nonumber\\
&&(v_{i+2}\otimes \cdots \otimes v_{i-1}\otimes w_{j+1}\otimes\cdots\otimes w_{m}\otimes u_{1}\otimes \cdots\otimes u_{l}\otimes w_{1}\otimes \cdots \otimes w_{j-1}\otimes v_{i+1})\label{Uninveq1}\\
&+&\sum_{i=1}^{n+1}\sum_{k=1}^{l}\pm\langle v_{i},\,u_{k}\rangle((\tau^{n-i}\mu)\circ_{n}((\tau^{l+1-k}\mu'')\circ_{l+1-k}\mu'))\otimes\nonumber\\
&&(v_{i+2}\otimes \cdots \otimes v_{i-1}\otimes u_{k+1}\otimes\cdots\otimes u_{l}\otimes w_{1}\otimes \cdots \otimes w_{m}\otimes u_{1}\otimes \cdots\otimes u_{k-1}\otimes v_{i+1})\label{Uninveq2}
\end{eqnarray}

On the other hand,
\begin{eqnarray}
p\big(\{\alpha,\,x\}\otimes y\big)&=&\sum_{i=1}^{n+1}\sum_{j=1}^{m}\pm\langle v_{i},\,w_{j}\rangle((\tau^{-1}(\mu'\circ_{j}(\tau^{n+1-i}\mu)))\circ_{n+m-1}\mu'')\otimes\nonumber\\
&&(w_{2}\otimes \cdots\otimes w_{j-1}\otimes v_{i+1}\otimes\cdots\otimes v_{i-1}\otimes w_{j+1}\otimes \cdots\otimes w_{m}\otimes u_{1}\otimes \cdots\otimes u_{l}\otimes w_{1})\nonumber\\
&=&\sum_{i=1}^{n+1}\sum_{j=1}^{m}\pm\langle v_{i},\,w_{j}\rangle(\tau^{n+m-j}(\mu'\circ_{j}(\tau^{n+1-i}\mu)))\circ_{n+m-j}\mu'')\otimes\nonumber\\
&&(v_{i+2}\otimes \cdots \otimes v_{i-1}\otimes w_{j+1}\otimes\cdots\otimes w_{m}\otimes u_{1}\otimes \cdots\otimes u_{l}\otimes w_{1}\otimes \cdots \otimes w_{j-1}\otimes v_{i+1})\label{Uninveq3}
\end{eqnarray}
Similarly
\begin{eqnarray}
\pm p\big(x\otimes \{\alpha,\,y\}\big)&=&\sum_{i=1}^{n+1}\sum_{k=1}^{l}\pm\langle v_{i},\,u_{k}\rangle((\tau^{-1}\mu')\circ_{m}((\mu''\circ_{k}(\tau^{n+1-i}\mu)))\otimes\nonumber\\
&&(w_{2}\otimes\cdots\otimes w_{m}\otimes u_{1}\otimes\cdots\otimes u_{k-1}\otimes v_{i+1}\otimes \cdots \otimes v_{i-1}\otimes u_{k+1}\otimes \cdots \otimes u_{l}\otimes w_{1})\nonumber\\
&=&\sum_{i=1}^{n+1}\sum_{k=1}^{l}\pm\langle v_{i},\,u_{k}\rangle((\tau^{n+l-k}(\mu''\circ_{k}(\tau^{n+1-i}\mu)))\circ_{n+l-k}\mu')\otimes\nonumber\\
&&(v_{i+2}\otimes \cdots \otimes v_{i-1}\otimes u_{k+1}\otimes\cdots\otimes u_{l}\otimes w_{1}\otimes \cdots \otimes w_{m}\otimes u_{1}\otimes \cdots\otimes u_{k-1}\otimes v_{i+1})\label{Uninveq4}
\end{eqnarray}

It is easy to see that (\ref{Uninveq1}) equals (\ref{Uninveq3}) and (\ref{Uninveq2}) equals  (\ref{Uninveq4}). This verifies \eqref{Uninveq}.

Next, we show that given $\alpha=\mu\otimes v_{1}\otimes \cdots \otimes v_{n}\otimes v_{n+1}, \,\beta=\nu\otimes w_{1}\otimes \cdots \otimes w_{m}\otimes w_{m+1}\in R_{\n}$ and $u\in V\subseteq R$,
\begin{equation}\label{OpcbLieAct1}
\{\{\alpha,\,\beta\},\,u\}=\{\alpha,\,\{\beta,\,u\}\}-\pm\{\beta,\,\{\alpha,\,u\}\}\ .
\end{equation}

Indeed, by (\ref{OpcbLie1}),
\begin{eqnarray*}
\{\alpha,\,\beta\}&=& \sum_{i=1}^{n+1}\sum_{j=1}^{m+1} \pm\langle v_{i},\,w_{j}\rangle(\tau^{n+1-i}\mu)(v_{i+1}\otimes \cdots \otimes v_{i-1})\otimes (\tau^{m+1-j}\nu)(w_{j+1}\otimes \cdots \otimes w_{j-1})\\
&=&\sum_{i=1}^{n+1}\sum_{j=1}^{m+1} \pm\langle v_{i},\,w_{j}\rangle((\tau^{n-i}\mu)\circ_{n}(\tau^{m+1-j}\nu))\otimes(v_{i+2}\otimes \cdots \otimes v_{i-1}\otimes w_{j+1}\otimes \cdots \otimes w_{j-1}\otimes v_{i+1})\,.
\end{eqnarray*}
Hence
\begin{eqnarray}
\{\{\alpha,\,\beta\},\,u\} &=&\sum_{i=1}^{n+1}\sum_{j=1}^{m+1}\sum_{k=1}^{j-1}\pm\langle v_{i},\,w_{j}\rangle\langle w_{k},\,u\rangle\tau^{j-k}((\tau^{n-i}\mu)\circ_{n}(\tau^{m+1-j}\nu))\otimes\nonumber\\
&&(w_{k+1}\otimes \cdots \otimes w_{j-1}\otimes v_{i+1}\otimes \cdots \otimes v_{i-1}\otimes w_{j+1}\otimes \cdots \otimes w_{k-1})\label{OpcbLieAct2}\\
&+& \sum_{i=1}^{n+1}\sum_{j=1}^{m+1}\sum_{k=j+1}^{m+1}\pm\langle v_{i},\,w_{j}\rangle\langle w_{k},\,u\rangle\tau^{m+1+j-k}((\tau^{n-i}\mu)\circ_{n}(\tau^{m+1-j}\nu))\otimes \nonumber\\
&&(w_{k+1}\otimes \cdots \otimes w_{j-1}\otimes v_{i+1}\otimes \cdots \otimes v_{i-1}\otimes w_{j+1}\otimes \cdots \otimes w_{k-1})\label{OpcbLieAct3}\\
&+& \sum_{i=1}^{n+1}\sum_{j=1}^{m+1}\sum_{k=1}^{i-1}\pm\langle v_{i},\,w_{j}\rangle\langle v_{k},\,u\rangle\tau^{m+i-k}((\tau^{n-i}\mu)\circ_{n}(\tau^{m+1-j}\nu))\otimes \nonumber\\
&&(v_{k+1}\otimes \cdots \otimes v_{i-1}\otimes w_{j+1}\otimes \cdots \otimes w_{j-1}\otimes v_{i+1}\otimes \cdots \otimes v_{k-1})\label{OpcbLieAct4}\\
&+& \sum_{i=1}^{n+1}\sum_{j=1}^{m+1}\sum_{k=i+1}^{n+1}\pm\langle v_{i},\,w_{j}\rangle\langle v_{k},\,u\rangle\tau^{m+n+1+i-k}((\tau^{n-i}\mu)\circ_{n}(\tau^{m+1-j}\nu))\otimes  \nonumber\\
&&(v_{k+1}\otimes \cdots \otimes v_{i-1}\otimes w_{j+1}\otimes \cdots \otimes w_{j-1}\otimes v_{i+1}\otimes \cdots \otimes v_{k-1})\label{OpcbLieAct5}
\end{eqnarray}

\begin{equation*}
\{\beta,\,u\}=\sum_{k=1}^{m+1}\pm\langle w_{k},\,u\rangle(\tau^{m+1-k}\nu)(w_{k+1}\otimes \cdots \otimes w_{k-1})\,.
\end{equation*}
Hence
\begin{eqnarray}
\{\alpha,\,\{\beta,\,u\}\} &=& \sum_{i=1}^{n+1}\sum_{j=1}^{k-1}\sum_{k=1}^{m+1}\pm\langle v_{i},\,w_{j}\rangle\langle w_{k},\,u\rangle((\tau^{m+1-k}\nu)\circ_{m+1+j-k}(\tau^{n+1-i}\mu))\otimes\nonumber\\
&&(w_{k+1}\otimes \cdots \otimes w_{j-1}\otimes v_{i+1}\otimes \cdots \otimes v_{i-1}\otimes w_{j+1}\otimes \cdots \otimes w_{k-1})\label{OpcbLieAct6}\\
&+&\sum_{i=1}^{n+1}\sum_{j=k+1}^{m+1}\sum_{k=1}^{m+1}\pm\langle v_{i},\,w_{j}\rangle\langle w_{k},\,u\rangle((\tau^{m+1-k}\nu)\circ_{j-k}(\tau^{n+1-i}\mu))\otimes\nonumber\\
&&(w_{k+1}\otimes \cdots \otimes w_{j-1}\otimes v_{i+1}\otimes \cdots \otimes v_{i-1}\otimes w_{j+1}\otimes \cdots \otimes w_{k-1})\label{OpcbLieAct7}
\end{eqnarray}
Similarly,
\begin{eqnarray}
\{\beta,\,\{\alpha,\,u\}\} &=& \sum_{j=1}^{m+1}\sum_{i=1}^{k-1}\sum_{k=1}^{n+1}\pm\langle w_{j},\,v_{i}\rangle\langle v_{k},\,u\rangle((\tau^{n+1-k}\mu)\circ_{n+1+i-k}(\tau^{m+1-j}\nu))\otimes\nonumber\\
&&(v_{k+1}\otimes \cdots \otimes v_{i-1}\otimes w_{j+1}\otimes \cdots \otimes w_{j-1}\otimes v_{i+1}\otimes \cdots \otimes v_{k-1})\label{OpcbLieAct8}\\
&+&\sum_{j=1}^{m+1}\sum_{i=k+1}^{n+1}\sum_{k=1}^{n+1}\pm\langle w_{j},\,v_{i}\rangle\langle v_{k},\,u\rangle((\tau^{n+1-k}\mu)\circ_{i-k}(\tau^{m+1-j}\nu))\otimes\nonumber\\
&&(v_{k+1}\otimes \cdots \otimes v_{i-1}\otimes w_{j+1}\otimes \cdots \otimes w_{j-1}\otimes v_{i+1}\otimes \cdots \otimes v_{k-1})\label{OpcbLieAct9}
\end{eqnarray}
One can check that (\ref{OpcbLieAct2}) equals  (\ref{OpcbLieAct7}), (\ref{OpcbLieAct3}) equals (\ref{OpcbLieAct6}), (\ref{OpcbLieAct4}) is identical to (\ref{OpcbLieAct9}), and (\ref{OpcbLieAct5}) equals (\ref{OpcbLieAct8}). This verifies (\ref{OpcbLieAct1}). Since $R_{\n}$ acts on $R$ by derivations,
$$ \{\{\alpha,\,\beta\},\,x\}=\{\alpha,\,\{\beta,\,x\}\}-\pm\{\beta,\,\{\alpha,\,x\}\} $$
for any $x \,\in\,R$. Therefore, for $\alpha,\,\beta \in R_{\n}$ and $x\otimes y\in R\otimes R$, we have
\begin{equation}\label{OpcbLieAct10}
\{\{\alpha,\,\beta\},\,x\otimes y\}=\{\alpha,\,\{\beta,\,x\otimes y\}\}-\pm\{\beta,\,\{\alpha,\,x\otimes y\}\}\ .
\end{equation}
Since $p:R \otimes R \rar R_{\n}$ is $R_{\n}$-equivariant,
\begin{equation}\label{OpcbLieAct11}
\{\{\alpha,\,\beta\},\,\gamma\}=\{\alpha,\,\{\beta,\,\gamma\}\}-\pm\{\beta,\,\{\alpha,\,\gamma\}\}\,
\end{equation}
for any $\gamma\,\in\,R_{\n}$. This verifies the Jacobi identity for the bracket (\ref{OpcbLie1}), proving that it is a Lie bracket. Since this bracket descends from an action of $R_{\n}$ on $R \otimes R$ induced by an action of $R_{\n}$ on $R$ by derivations (recall the $R_{\n}$-equivariance of $p$), this bracket defines a Poisson structure on $R$.
\end{proof}

It remains to show that $\Tr\,:\,R_{\n} \rar \mathfrak{C} \ltimes R$ is a DG Lie algebra homomorphism. By \cite[Theorem A.2]{BFPRW}, $C\ltimes R\cong \cb_{\mathtt{Lie}}[{\mathcal Lie}^c({\mathfrak{C}} \otimes C)]$. By Proposition \ref{OpLCBPoiss}, $\mathfrak{C}\ltimes R$ acquires a structure of DG Poisson (and therefore, Lie) algebra. Hence there is an action of $R_{\n}$ on $\mathfrak{C}\ltimes R$ via $\Tr$. Since $\mathfrak{C}\ltimes R$ is freely generated by $\mathfrak{C}\otimes V$ as a graded commutative algebra, $\Omega^{1}({\mathfrak{C}\ltimes R})\cong (\mathfrak{C}\ltimes R)\otimes \mathfrak{C}\otimes V$. Since $\mathfrak{C}$ is finite dimensional, $\mathbf{Hom}(\mathfrak{C}, B)\,\cong\,B \otimes \mathscr{S}$ for any $B\,\in\,\cDGA_{k/k}$. Let $\pi_R$ be the universal representation
$$\begin{diagram} R & \rTo & \mathbf{Hom}_{\ast}(\mathfrak{C}, \mathfrak{C} \ltimes R) & \rInto & \mathbf{Hom}(\mathfrak{C}, \mathfrak{C} \ltimes R)\,\cong\,(\mathfrak{C}\ltimes R)\otimes \mathscr{S}\end{diagram} \ . $$
 The following lemma generalizes Lemma \ref{lderham}. We leave its proof (which is very similar to that of Lemma \ref{lderham}) to the interested reader.

\blemma \la{OpderDR}
The following diagram commutes:
$$
\begin{diagram}
R_{\n} & \rTo^{\bar{\partial}} & R\otimes V & \rTo^{\pi_{R}\otimes \id} & (\mathfrak{C}\ltimes R)\otimes \mathscr{S} \otimes V\\
& \rdTo_{\Tr} & & & \dTo^{\cong}\\
& & \mathfrak{C}\ltimes R & \rTo^{d} & \Omega^1({\mathfrak{C}\ltimes R})
\end{diagram}
$$
Here, the vertical isomorphism on the right identifies $\mathscr{S}$ with $\mathfrak{C}$ via the pairing on $\mathscr{S}$.
\elemma
The following proposition follows from Lemma \ref{OpderDR} just as Proposition \ref{univrep} follows from Lemma \ref{lderham}.
\bprop \la{univrepOp}
The universal representation $\pi_R\,:\,R \rar (\mathfrak{C} \ltimes R) \otimes \mathscr{S}$ is $R_{\n}$-equivariant, where $R_{\n}$ acts trivially on $\mathscr{S}$.
\eprop
It follows from Proposition \ref{univrepOp} that the composite map
$$ \begin{diagram} R \otimes R & \rTo^{\pi_R \otimes \pi_R} & (\mathfrak{C} \ltimes R) \otimes \mathscr{S} \otimes (\mathfrak{C} \ltimes R) \otimes \mathscr{S}\,\cong\, (\mathfrak{C} \ltimes R)^{\otimes 2} \otimes \mathscr{S}^{\otimes 2} & \rTo^{\mu \otimes \langle \mbox{--},\mbox{--}\rangle} & \mathfrak{C} \ltimes R \end{diagram} $$
is $R_{\n}$-equivariant. Denote the above map by $\Tr_{R \otimes R}$. Since the diagram below commutes,
$$ \begin{diagram}
R \otimes R & \rTo^{\Tr_{R \otimes R}} & \mathfrak{C} \ltimes R\\
  \dTo^{p} & \ruTo^{\Tr} & \\
  R_{\n} & &
  \end{diagram}$$
and since $p$ is $R_{\n}$-equivariant as well, $\Tr$ is $R_{\n}$-equivariant. This proves that $\Tr:R_{\n} \rar \mathfrak{C} \ltimes L$ is a DG Lie algebra homomorphism. The second assertion of Theorem \ref{operadpoiss} follows on homologies.


\begin{thebibliography}{}
%
\bibitem{BCER}
Yu. Berest, X. Chen, F. Eshmatov and A. Ramadoss, \textit{Noncommutative Poisson structures, derived representation schemes and Calabi-Yau algebras}, Contemp. Math. \textbf{583} (2012), 219-246.
%
\bibitem{BKR}
Yu. Berest, G. Khachatryan and A. Ramadoss, \textit{Derived representation schemes and cyclic homology}, Adv. Math. \textbf{245} (2013), 625-689.
%
\bibitem{BFPRW}
Yu. Berest, G. Felder, A. Patotski, A. C. Ramadoss and T. Willwacher, \textit{Representation homology, Lie algebra cohomology and derived Harish-Chandra homomorphism}, J. Eur. Math. Soc. \textbf{19} (2017), no. 9, 2811-2893.
%
\bibitem{BRZ}
Yu. Berest, A. C. Ramadoss and Y. Zhang, \textit{Dual Hodge decompositions and derived Poisson brackets}, Selecta Math. N. S. \textbf{23} (2017), no. 3, 2029-2070.
%
\bibitem{BR}
Yu. Berest and A. Ramadoss, \textit{Stable representation homology and Koszul duality}, J. Reine Angew. Math.  \textbf{715} (2016), 143-187.
%
\bibitem{BL}
R.~Bocklandt and L.~Le Bruyn, {\it Necklace Lie algebras and noncommutative symplectic geometry}, Math. Z. \textbf{240} (2002), no. 1, 141-167.
%
\bibitem{CB}
W. Crawley-Boevey, \textit{Poisson structures on moduli spaces of representations}, J. Algebra \textbf{325} (2011), 205-215.
%
%
\bibitem{CEEY}
X.~Chen, A.~Eshmatov, F.~Eshmatov and S.~Yang, {\it The derived non-commutative Poisson bracket on Koszul Calabi-Yau algebras}, J. Noncommut. Geom. \textbf{11} (2017), no. 1, 111-160.
%
%
\bibitem{Dr}
V. G. Drinfeld, \textit{On quasitriangular quasi-Hopf algebras and on a group that is closely connected with $\mathrm{Gal}(\overline{\mathbb Q}/{\mathbb Q})$}, Leningrad Math. J.,
\textbf{2} (1991), 829--860.
%
%
\bibitem{DHKS}
W.~Dwyer, P.~Hirschhorn, D.~Kan and J.~Smith, {\it  Homotopy Limit Functors on
Model Categories and Homotopical Categories}, Mathematical Surveys and Monographs \textbf{113}, AMS, Providence, RI, 2004.
%
%
\bibitem{GK}
E. Getzler and M. Kapranov, \textit{Cyclic operads and cyclic homology}, Geometry, topology and physics, 167-201, Conf. Proc. Lecture Notes Geom. Topology, IV, Intl. Press, Cambridge MA 1995.
%
%
\bibitem{FT}
B.~Feigin and B.~Tsygan, {\it Additive K-Theory and crystalline cohomology}, Funct. Anal. Appl. \textbf{19} (1985), no. 2, 124-132.
%
%
\bibitem{G}
V.~Ginzburg, {\it Noncommutative symplectic geometry, quiver varieties and operads}, Math. Res. Lett. \textbf{8} (2001), 377-400.
%
\bibitem{Go1}
W. Goldman, {\it The symplectic nature of fundamental groups of surfaces}, Adv. Math. \textbf{54} (1984), no. 2, 200-225.
%
\bibitem{Go2}
W. Goldman, {\it Invariant functions on Lie groups and Hamiltonian flows of surface group representations}, Invent. Math. \textbf{85} (1986), 263-302. 
%
\bibitem{Ka}
 C. Kassel, {\it L'homologie cyclique des alg\`ebres enveloppantes}, Invent. Math. \textbf{91} (1988), 221-251.
%
\bibitem{L}
J.-L. Loday, {\it Cyclic homology}, Grundl. Math. Wiss. \textbf{301}, 2nd Ed., Springer-Verlag, Berlin, 1998.
%
\bibitem{LV}
J-L. Loday and B. Vallette, \textit{Algebraic Operads},
Grundlehren der Mathematischen Wissenschaften \textbf{346}, Springer, Heidelberg, 2012.
%
\bibitem{Q1}
D. Quillen, \textit{Homotopical Algebra}, Lecture Notes in Math. \textbf{43}, Springer-Verlag, Berlin, 1967.
%
\bibitem{VdB}
M. Van den Bergh, \textit{Double Poisson algebras}, Trans. AMS \textbf{360} (2008), 5711-5769.
%
\bibitem{VdB2}
 M. Van den Bergh, \textit{Calabi-Yau algebras and superpotentials}, Selecta Math. (N.S.) \textbf{21} (2015), no. 2,
555-603.
%
\bibitem{Z}
Y.~Zhang, {\it Cyclic pairings and noncommutative Poisson structures}, thesis in preparation.
%
\end{thebibliography}
\end{document}